\documentclass[10pt]{amsart}
\addtolength\headheight{4pt}
\usepackage[margin=1.2in]{geometry}
\usepackage{amssymb,mathrsfs}
\usepackage{amsmath}
\usepackage{amsopn}
\usepackage{combelow}

 \usepackage[backend=biber,style=alphabetic]{biblatex}
 \bibliography{Bibtex}
 \usepackage{hyperref}
 \usepackage{xurl}
 \hypersetup{
 colorlinks = true,
 linkcolor = blue,
 breaklinks = true
 }


\setlength\parindent{0pt}
\setlength{\oddsidemargin}{5pt} \setlength{\evensidemargin}{5pt}
\setlength{\textwidth}{440pt}
\setlength{\topmargin}{-50pt}
\setlength{\textheight}{24cm}
\setlength{\parskip}{3mm plus0.4mm minus0.4mm}

\newcommand{\diffto}{\xrightarrow{\raisebox{-0.2 em}[0pt][0pt]{\smash{\ensuremath{\sim}}}}}

\newcommand{\dif}{\mathrm{d}}

\newcommand{\SD}{{\mathcal{D}}}
\newcommand{\X}{{\mathfrak{X}}}
\newcommand{\Lie}{{\mathcal{L}}}
\newcommand{\F}{{\mathcal{F}}}
\newcommand{\SI}{{\mathcal{I}}}
\newcommand{\SJ}{{\mathcal{J}}}
\newcommand{\SR}{{\mathcal{R}}}
\newcommand{\SM}{{\mathcal{M}}}

\newcommand{\R}{{\mathbb{R}}}
\newcommand{\Z}{{\mathbb{Z}}}
\newcommand{\C}{{\mathbb{C}}}
\newcommand{\N}{\mathbb{N}}
\newcommand{\wtd}{\widetilde}
\renewcommand{\d}{{\operatorname{d}}}
\DeclareMathOperator{\im}{Im}

\newtheorem*{definition*}{Definition}

\newtheorem{lemma}{Lemma}[section]
\newtheorem{proposition}[lemma]{Proposition}
\newtheorem{theorem}[lemma]{Theorem}
\newtheorem{corollary}[lemma]{Corollary}
\newtheorem{definition}[lemma]{Definition}

\newtheorem{remark}[lemma]{Remark}
\newtheorem{example}[lemma]{Example}
\newtheorem{question}[lemma]{Question}
\newtheorem{claim}[lemma]{Claim}

\newcommand{\Addresses}{{
  \bigskip
  \footnotesize

Lauran Toussaint,\par\vspace*{-10pt}
\textsc{Vrije Universiteit Amsterdam, 1081 HV Amsterdam, The Netherlands}\par\vspace*{-10pt}
  \textit{E-mail address}: \texttt{l.e.toussaint@vu.nl}

\medskip

 Florian Zeiser,\par\vspace*{-10pt}\nopagebreak
\textsc{University of Illinois at Urbana-Champaign, 61801 Urbana, United States}\par\vspace*{-10pt}\nopagebreak
\textit{E-mail address}: \texttt{fzeiser@illinois.edu}
}}

\title{Formal Poisson (co)homology of the Lefschetz singularity}
\author{Lauran Toussaint, Florian Zeiser}

\begin{document} 

\maketitle

\begin{abstract}
We compute the formal Poisson cohomology groups of a real Poisson structure $\pi$ on $\C^2$ associated to the Lefschetz singularity $(z_1,z_2)\mapsto z_1^2 + z_2^2$. In particular we correct an erroneous computation in the literature. 
The definition of $\pi$ depends on a choice of volume form. Using the main result we formally classify all Poisson structure arising from different choices of volume forms.
\end{abstract}

\section{Introduction}
A Poisson structure on a smooth manifold $M$ is a bivector field $\pi \in \mathfrak{X}^2(M) $ such that
\begin{equation*}
    [\pi ,\pi ]=0,
\end{equation*}
where $[\cdot ,\cdot ]$ denotes the Schouten bracket (see e.g. \cite{LPV13}). As first observed by Lichnerowicz \cite{Lic77}, any Poisson manifold $(M,\pi)$ is naturally equipped with a differential on the space of mulivector fields
\[ d_\pi := [\pi,\cdot]:\ \X^\bullet(M) \to \X^{\bullet +1}(M). \]
The vanishing of the bracket is equivalent to $\d_\pi^2 =0$, and the associated cohomology $H^\bullet(M,\pi)$ is called \emph{Poisson cohomology}. The groups $H^\bullet(M,\pi)$ contain important information about the Poisson manifold $(M,\pi)$. For example:
\begin{itemize}
    \item $H^0(M,\pi)$ consists of functions on $M$ constant along the leaves of the foliation $\F$;
    \item $H^1(M,\pi),[\cdot,\cdot])$ can be seen as the Lie algebra of infinitesimal outer automorphisms of $(M,\pi)$;
    \item $H^2(M,\pi)$ controls infinitesimal deformations modulo those induced by diffeomorphisms.
\end{itemize}
For a detailed exposition of Poisson geometry and Poisson cohomology we refer the reader to \cite{DZ05,LPV13,CFM21}. Despite their importance, Poisson cohomology is notoriously hard to compute due to the lack of general methods for the computation. The difficulty comes from the fact that $(\mathfrak{X}^{\bullet}(M),\dif_{\pi})$ is generally not an elliptic complex. Assuming that $\pi$ is regular, i.e. all leaves of $\F$ have the same dimension, Xu computed the Poisson cohomology of such Poisson structures in \cite{Xu92}. This, together with the existence of a Mayer-Vietoris sequence due to Vaisman \cite{Vai94}, implies that the fundamental problem for the computation lies around neighborhoods of singular leaves. 

To this end observe that given a Poisson structure $\pi$ on $M$ and a singular point $p\in M$ of $\pi$, i.e. $\pi_p = 0$, we have a short exact sequence of complexes
\begin{align*}
    0\to (\mathfrak{X}^\bullet_p(M),\dif_{\pi})\hookrightarrow (\mathfrak{X}^\bullet(M),\dif_{\pi}) \xrightarrow{j^\infty_p} (\mathfrak{X}^\bullet(M)/\mathfrak{X}^\bullet_p(M)\simeq \R[[T^*_pM]]\otimes \wedge^{\bullet}T_pM,\dif_{j_p^{\infty}\pi} )\to 0
\end{align*}
resulting in a long exact sequence in cohomology. Here $j^\infty_p$ is the infinite jet map at $p$, and $\mathfrak{X}^\bullet_p(M)$ denotes multivector fields whose infinite jet at $p$ is zero. The equivalence for the quotient is due to Borel's lemma. A similar short exact sequence can be obtained when the singular locus of $\pi$ is a higher dimensional submanifold, see \cite{HZ23}.

The short exact sequence above was first used by Abreu \& Ginzburg \cite{Gin96} and Roytenberg \cite{Roy02}. Its usefulness lies in the fact that if $\pi$ vanishes in a polynomial way then we can compute the cohomology of $(\mathfrak{X}^\bullet_p(M),\dif_{\pi})$ as if $\pi$ was regular. As such we are left with the computation of the complex
\[ (\R[[T^*_pM]]\otimes \wedge^{\bullet}T_pM,\dif_{j_p^{\infty}\pi} ). \]
We call its cohomology the formal Poisson cohomology of $\pi$ at $p\in M$, denoted by $H^\bullet_{F_p}(M,\pi)$. The methods used to compute $H^\bullet_{F_p}(M,\pi)$ are more algebraic in nature (see e.g. \cite{Mon02, Pic06, Pel09}). 

The Poisson structure under consideration in this paper are defined as follows. First recall that any fibration $f:(M^m,\mu_M) \to (N^{m-2},\mu_N)$ between oriented manifolds $(M,\mu_M)$ and $(N,\mu_N)$ defines a Poisson structure $\pi_{f,\mu_M,\mu_N}$ on $M$ due to \cite{GSV14}[Thm 2.6] by:
\begin{align}\label{eq: Poisson via fibration}
    \pi_{f,\mu_M,\mu_N}(\dif g,\dif h)\mu_M := \dif g\wedge \dif h\wedge f^*{\mu_{N}} \qquad \text{ for }\quad g,h\in C^{\infty}(M).
\end{align}
Note that $\pi_{f,\mu_M,\mu_N}$ depends on the fibration $f:M\to N$ and on the choice of volume forms on $M$ and $N$. Poisson structures defined by different volume forms admit the same singular foliation $\F$ with:
\begin{itemize}
    \item $2$-dimensional leaves through every $x\in M\setminus \mathrm{Crit}(f)$;
    \item $0$-dimensional leaves for every $x\in \mathrm{Crit}(f)$.
\end{itemize}
However, the symplectic form along the leaves will vary for different choices of volume forms. The study of such Poisson structures has been of increased interest lately (see e.g. \cite{STV19,BO22}). Our Poisson structure of interest is induced by a fibration with a \emph{Lefschetz singulartiy}:
\begin{align}\label{eq: lf}
    f:\R^4 \to \R^2 \qquad \text{ with }\quad f= (f_1,f_2) =\left(x_1^2-x_2^2+x_3^2-x_4^2 ,\,  2(x_1x_2+x_3x_4)\right)
\end{align}
\begin{definition}\label{def: poisson biv}
Let $\pi=\pi_{f,\mu_M,\mu_N}$ be the Poisson structure on $\R^4$ defined by \eqref{eq: Poisson via fibration}, \eqref{eq: lf} and the standard volume forms in $\R^4$ and $\R^2$, respectively.
\end{definition}
In this paper we take the first step in the computation of $H^\bullet(\R^4,\pi)$ by computing the formal Poisson cohomology of $\pi$ at the origin which we denote by $H^\bullet_F(\R^4,\pi)$. We do so by means of its Poisson homology, i.e.\ on a Poisson manifold $(M^m,\pi)$ we have another differential on the space of forms
\begin{align*}
    \delta_{\pi}:\Omega^{\bullet}(M)\to \Omega^{\bullet-1}(M) \qquad
    \delta_{\pi} := \iota_{\pi}\circ \dif -\dif \circ \iota_{\pi}
\end{align*}
introduced by Kozsul \cite{Kos85} and Brylinski \cite{Bry88}. Here $\dif$ denotes the de-Rham differential and $\iota$ the contraction between mulitvector fields and forms. The homology associated to $(\Omega^{\bullet}(M),\delta_{\pi})$ is called Poisson homology and denoted by $H_{\bullet}(M,\pi)$. Given a volume form $\mu\in \Omega^{m}(M)$ on $M$ the relation between Poisson (co)homology can be encoded as follows. First note that $\mu$ induces the isomorphism 
\begin{align}\label{eq: isomorphism form vf}
    \star := \mu^\flat :\mathfrak{X}^{\bullet}(M)\diffto\Omega^{\bullet}(M), \qquad X\mapsto \iota_X(\mu).
\end{align}
We define a vector field $X_{\mu} \in \mathfrak{X}^1(M)$, the \emph{modular vector field} of $(M,\pi) $ and $\mu$ (see \cite{Wei97}), by
\begin{equation*}
    \star X_\mu := \dif \star (\pi).
\end{equation*}
The vector field $X_{\mu}$ is Poisson, i.e. $\dif_{\pi}X_{\mu}=0$ and the class $\textrm{mod}(M,\pi)=[X_\mu]\in H^1(M,\pi)$ is independent of the chosen volume form $\mu$. 
Using the isomorphism $\star$ we obtain the relation 
\begin{equation}\label{eq:partialpi_identity} 
    \delta_{\pi} =\star \circ (\dif_{\pi}+X_\mu \wedge \cdot ) \circ \star^{-1},
\end{equation}
between the Poisson differentials $\dif_{\pi}$ and $\delta_\pi$ (see \cite{LPV13}[Proposition 4.18]). We call $(M,\pi)$ \emph{unimodular} if $\textrm{mod}(M,\pi)=0\in H^1(M,\pi)$. It is well-known that $(M,\pi)$ is unimodular iff there exists a volume form $\mu$ with $X_\mu=0$. In particular, for all unimodular Poisson structures we obtain an isomorphism
\begin{align}\label{eq: iso cohom}
    H_{\bullet}(M,\pi)\diffto H^{m-\bullet}(M,\pi).
\end{align}
In particular, for $(\R^4, \pi)$ from Definition \ref{def: poisson biv} we have
\[ \star(\pi)=\dif f_1\wedge \dif f_2,\]
and hence $(\R^4, \pi)$ is unimodular. Note that the isomorphism induced by \eqref{eq: isomorphism form vf} descends via $j^\infty_0$ to an isomorphism between formal vector fields and forms, which we denote by
\begin{align*}
    \mathfrak{X}^{\bullet}_f:= \SR\otimes \wedge^{\bullet}\R^4 \qquad \text{ and }\qquad \Omega^{\bullet}_f :=  \SR\otimes \wedge^{\bullet}(\R^4)^*.
\end{align*}
Here $\SR:= \R[[x_1,\dots ,x_4]]$ denotes the ring of formal power series. Therefore, \eqref{eq: iso cohom} descends to the formal setting and we obtain an isomorphism between formal Poisson (co)homology 
\begin{align*}
    H_{\bullet}^F(\R^4,\pi)\diffto H^{4-\bullet}_F(\R^4,\pi),
\end{align*}
which allows us to compute formal Poissson cohomology by means of its homological counterpart. Since from here on out we refer to Poisson (co)homology only in the formal setting, we drop the sub- and superscript $F$, respectively throughout the rest of the paper. 
\newpage
The main result is the following:
\begin{theorem}\label{main theorem}
 The formal Poisson homology groups $H_{\bullet}(\R^4,\pi)$ of $(\R^4,\pi)$ are uniquely described in the various degrees as follows:
\begin{itemize}
    \item in degree $0$, $H_0(\R^4,\pi)$ has unique representatives of the form
    \begin{equation*}
        p +  \sum_{i=1}^4 a_ix_i
    \end{equation*}
    where $p \in  \R[[f_1,f_2]]$ and  $a_i \in \R[[x_2^2,x_4]]$;
    \item the group $H_1(\R^4,\pi)$ has unique representatives of the form
    \begin{equation*}
        \sum_{j=1}^2 p_j \zeta_j + q_j \d f_j + \sum_{i=1}^4 \d (a_ix_i) + b_ix_i \d f_1
    \end{equation*}
    where $p_j,q_j \in  \R[[f_1,f_2]]$ and  $a_i,b_i \in \R[[x_2^2,x_4]]$ and
    \begin{align}\label{eq: dual rot}
        \zeta_1:= \frac{1}{2}(-x_3\dif x_1 +x_4\dif x_2+x_1\dif x_3 - x_2\dif x_4), \qquad \zeta_2:= \frac{1}{2}(-x_4\dif x_1 -x_3\dif x_2+x_2\dif x_3 + x_1\dif x_4);
    \end{align}
    \item in degree $2$, $H_2(\R^4,\pi)$ has unique representatives of the form
    \begin{equation*}
        p \zeta_1\wedge \zeta_2 +q\d f_1\wedge \d f_2 + \sum_{i=1}^2 p_i \d(f_1\zeta_i) + q_i\d \zeta_i +\sum_{i=1}^4 \d (a_ix_i)\wedge \d f_1
    \end{equation*}
    where $p, p_i, q, q_i \in  \R[[f_1,f_2]]$ and $a_i \in \R[[x_2^2,x_4]]$;
    \item representatives of $H_3(\R^4,\pi)$ are uniquely described by
    \begin{align*}
        \sum_{i=1}^2 p_i \zeta_2 \wedge \d \zeta_i +q_i \d f_1 \wedge \d \zeta_i 
    \end{align*}
    where $p_i, q_i \in  \R[[f_1,f_2]]$;
    \item  in degree $4$, elements in $ H_4(\R^4,\pi)$ are of the form
    \begin{align*}
        \R[[f_1,f_2]]\mu.
    \end{align*}
\end{itemize}
\end{theorem}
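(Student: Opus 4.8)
The plan is to convert the computation into linear algebra over the Casimir ring and then carry it out in each degree, separating the "obvious" closed forms from the singular corrections. First I would record that $f_1,f_2$ are Casimirs of $\pi$: from $\{g,h\}\,\mu=\dif g\wedge\dif h\wedge\dif f_1\wedge\dif f_2$ the bracket with $f_1$ or $f_2$ vanishes, so multiplication by $f_i$ is a chain endomorphism of $(\Omega^{\bullet}_f,\delta_{\pi})$ (the correction term in the Koszul identity is $\iota_{X_{f_i}}(\cdot)=0$). Hence every $H_{\bullet}(\R^4,\pi)$ is a module over $\R[[f_1,f_2]]$, which already explains the $\R[[f_1,f_2]]$-coefficients throughout the statement, and I would treat $f_1,f_2$ as scalars from here on. Next I would exhibit the closed building blocks: besides $\dif f_1,\dif f_2$, one checks that $\zeta_1,\zeta_2$ are $\delta_{\pi}$-cycles. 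Indeed $\dif\zeta_1=\dif x_1\wedge\dif x_3-\dif x_2\wedge\dif x_4$ and $\dif\zeta_2=\dif x_1\wedge\dif x_4+\dif x_2\wedge\dif x_3$, so $\delta_{\pi}\zeta_1=\iota_{\pi}\dif\zeta_1=\{x_1,x_3\}-\{x_2,x_4\}$ and $\delta_{\pi}\zeta_2=\{x_1,x_4\}+\{x_2,x_3\}$; a direct bracket computation gives $\{x_1,x_3\}=\{x_2,x_4\}=4(x_2x_3-x_1x_4)$ and $\{x_1,x_4\}=-\{x_2,x_3\}=-4(x_1x_3+x_2x_4)$, whence $\delta_{\pi}\zeta_1=\delta_{\pi}\zeta_2=0$. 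Consequently all wedge products of $\dif f_1,\dif f_2,\zeta_1,\zeta_2$, together with the constant-coefficient exact $2$-forms $\dif\zeta_i$, are automatically cycles, and these supply exactly the generators $\zeta_1\wedge\zeta_2$, $\dif f_1\wedge\dif f_2$, $\dif(f_1\zeta_i)$, $\zeta_2\wedge\dif\zeta_i$, etc.\ listed in the theorem.

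Writing $\delta_{\pi}$ in the coframe $\dif x_1,\dots,\dif x_4$, each $H_k$ becomes the quotient of an explicit $\R[[f_1,f_2]]$-module of cycles by an explicit module of boundaries, the base case being $H_0=\SR/\sum_{i}X_{x_i}(\SR)$, the cokernel of the Hamiltonian vector fields of the coordinate functions. To make this tractable I would grade the complex by the torus of Poisson symmetries of $\pi$ — the simultaneous rotation $z\mapsto e^{i\theta}z$ of $(z_1,z_2)=(x_1+ix_2,x_3+ix_4)$, which only rotates $(f_1,f_2)$ and therefore preserves $\pi$, together with the rotation of $(z_1,z_2)$ fixing $f$. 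Since $\delta_{\pi}$ commutes with these symmetries it preserves the torus weight, and because the structure functions of $\pi$ are homogeneous quadratic it raises the Euler (polynomial) degree by exactly one. Thus each bigraded piece, fixing torus weight and Euler degree, is finite dimensional, the differential is explicit there, and computing $\ker\delta_{\pi}$ and $\operatorname{im}\delta_{\pi}$ reduces to finite linear algebra assembled across Euler degrees by generating-function bookkeeping.

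The crux, and where I expect essentially all the difficulty to lie, is pinning down the boundary modules exactly and exhibiting \emph{unique} representatives for the quotients. Solving the cohomological equation $\delta_{\pi}\beta=\alpha$ over $\SR$ couples adjacent Euler degrees, so the cokernels are genuinely infinite dimensional and must be described by a uniform normal form rather than degree by degree. Fixing a monomial order adapted to the grading above and computing leading terms of boundaries, I expect precisely the residual families $a_ix_i,\,b_ix_i$ with $a_i,b_i\in\R[[x_2^2,x_4]]$ to survive: these are the components of the cokernel transverse to the Casimir directions. Isolating them, and proving that the proposed representatives are simultaneously complete and non-redundant (the uniqueness asserted in the statement), is the hard part; the asymmetric-looking ring $\R[[x_2^2,x_4]]$ is simply the explicit complement singled out by this choice of order, and I would expect the bulk of the technical work to consist in verifying that no further relations or leftover classes appear in each torus weight.

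Finally I would cross-check the answer against external constraints. The unimodular duality $H_{\bullet}\diffto H^{4-\bullet}$ of \eqref{eq: iso cohom} forces the degree $0$ and $4$ answers, and the degree $1$ and $3$ answers, to correspond under $\star$, and $H_2$ to be self-dual, which constrains and confirms the shapes above (e.g.\ $H_4=\R[[f_1,f_2]]\mu$ matching the $\R[[f_1,f_2]]$-part of $H_0$). Via the jet/localization mechanism of the Introduction together with Xu's computation on the regular locus $\R^4\setminus\mathrm{Crit}(f)$, one checks that no class supported away from the origin has been dropped, and a Poincaré-series count over $\R[[f_1,f_2]]$ confirms that the listed representatives exhaust each group.
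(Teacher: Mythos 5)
There is a genuine gap: your proposal sets up the right framework (Casimir-module structure, homogeneity grading, cycle identification, a final Poincar\'e-series count --- all of which the paper also uses), but it never supplies the one idea that makes the computation possible, namely an analysis of the \emph{failure of the division property} for $\dif f_1,\dif f_2$. At every stage of the kernel/boundary computation one faces implications of the form ``$\beta\wedge\dif f_1\wedge\dif f_2=0$, hence $\beta=\dif f_1\wedge\alpha_1+\dif f_2\wedge\alpha_2$,'' and for the Lefschetz fibration this implication is \emph{false} in form-degree $2$: the obstruction is the de Rham--Saito division group $\mathcal{D}^2(\dif f_1,\dif f_2)$, which the paper computes explicitly to be $\R\oplus\R[[x_2]]\oplus\R[[x_2,x_4]]^2$ (Proposition \ref{proposition: iso division group}). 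This nonvanishing is exactly the source of the exotic classes with coefficients in $\R[[x_2^2,x_4]]$, it is what makes some homology groups non-free over $H^0$, and it is precisely the point on which the earlier computation in the literature went wrong (the ICIS assumption of \cite{BV20} is equivalent to the vanishing of this group). Your plan --- ``fix a monomial order, compute leading terms of boundaries, and expect the residual families $a_ix_i$, $b_ix_i$ to survive'' --- has no mechanism to detect or control this obstruction; without it you cannot establish the kernel description (the analogue of Proposition \ref{proposition: kernel differential}), and without the kernels you cannot compute the Hilbert--Poincar\'e series that your final exhaustiveness check relies on. In other words, the step you defer as ``the hard part'' is not a uniform bookkeeping problem but requires a new structural input, and the naive expectation that division works would lead to the wrong answer, not merely a longer proof.

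A secondary inaccuracy: you assert that all wedge products of the cycles $\dif f_1,\dif f_2,\zeta_1,\zeta_2,\dif\zeta_i$ are ``automatically'' cycles. The Koszul--Brylinski operator $\delta_{\pi}$ is a second-order operator, not a derivation of the wedge product, so products of $\delta_{\pi}$-cycles need not be cycles; wedging with $\dif f_i$ does commute with $\delta_{\pi}$ up to sign because $f_i$ are Casimirs, but for products such as $\zeta_1\wedge\zeta_2$ the claim requires the direct verification that the paper in fact performs. This is repairable, but as written it is an unjustified step.
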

\begin{remark}\label{rmk:literature_mistake}
The groups $H_\bullet (\R^4,\pi)$ from Theorem \ref{main theorem}, or equivalently $H^\bullet (\R^4,\pi)$, were erroneously computed in \cite{BV20}. We outline the problem of their computation in section \ref{section: mistake explanation}.
\end{remark}

Using the isomorphism \eqref{eq: iso cohom} we describe the corresponding cohomology groups in Section \ref{sec:interpretation results}. In particular, we can use the result for the second formal cohomology to classify Poisson structures arising from the fibration in \eqref{eq: lf} via \eqref{eq: Poisson via fibration}. We obtain the following result.
\begin{corollary}\label{corollary: deformations}
    Any Poisson structure obtained from a different volume form is formally equivalent to a Poisson structure obtained from 
    \[ \mu_p = (c+p)\mu\]
    for $p\in \langle f_1,f_2\rangle_{\R[[f_1,f_2]]} $ and $c\in \R_+$ and any $p$ and $c$.
\end{corollary}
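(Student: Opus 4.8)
The plan is to deduce Corollary \ref{corollary: deformations} from the computation of $H^2(\R^4,\pi)$ via the isomorphism \eqref{eq: iso cohom}. First I would recall the general principle: if $\mu' = g\,\mu$ is a different volume form with $g$ a nowhere-vanishing formal function (a unit in $\SR$, so $g = c + (\text{higher order})$ with $c \in \R^\times$), then by \eqref{eq: Poisson via fibration} the resulting Poisson structure $\pi' = \pi_{f,\mu',\mu_N}$ satisfies $\pi' = g^{-1}\pi$, since rescaling the volume form on $M$ by $g$ rescales the bivector by $g^{-1}$. Thus the problem reduces to understanding, for a unit $g \in \SR$, which structures $g^{-1}\pi$ are formally equivalent, i.e. related by a formal Poisson diffeomorphism (a formal change of coordinates pulling one bivector to the other). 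After normalizing the sign of $c$ so that $c \in \R_+$, the task is to show every such $g^{-1}\pi$ is formally equivalent to $(c+p)^{-1}\pi$ for some $p \in \langle f_1, f_2\rangle$.

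**Next I would** pass to the infinitesimal/deformation-theoretic picture. Since $H^2(M,\pi)$ controls infinitesimal deformations of $\pi$ modulo those induced by diffeomorphisms, the relevant move is to translate the second cohomology computation through \eqref{eq: iso cohom}, $H_2(\R^4,\pi) \diffto H^2(\R^4,\pi)$, into a description of the space of formal deformation classes of $\pi$ within the family $\{g^{-1}\pi\}$. Concretely, writing $g = c\,e^{h}$ for a formal $h$ vanishing at the origin (possible since $g$ is a unit with positive leading term after sign normalization), one has a one-parameter family interpolating between $\pi$ and $g^{-1}\pi$, and its derivative is an infinitesimal deformation whose class lives in $H^2$. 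From the statement of Theorem \ref{main theorem}, the $\star$-images of the degree-$2$ homology classes involving $p\,\zeta_1\wedge\zeta_2$ and the $q\,\d f_1 \wedge \d f_2$ terms correspond exactly to the conformal rescalings of $\pi$ by functions in $\R[[f_1,f_2]]$; the remaining generators $\d(f_1\zeta_i)$, $\d\zeta_i$, and $\d(a_i x_i)\wedge \d f_1$ should be identified, via the long exact sequence and the modular interpretation, as directions that are realized by diffeomorphisms (Hamiltonian or gauge directions) and hence do not change the formal class.

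**The key computational step** is therefore to show that multiplying $\pi$ by a unit $g^{-1}$ only changes the formal equivalence class through the part of $g$ lying in $\R[[f_1,f_2]]$, modulo the multiplicative group generated by the exponential of the diffeomorphism-directions. I would argue that any unit $g$ can be written as $g = (c+p)\cdot u$ where $p \in \langle f_1,f_2\rangle$ and $u$ is a unit whose logarithm lies in the image of the diffeomorphism action on conformal factors; the factor $u$ is then absorbed by a formal Poisson diffeomorphism, leaving the normal form $(c+p)\mu$. This factorization is where the explicit description of $H^2$ in Theorem \ref{main theorem} does the real work: the quotient of all conformal factors by the subspace coming from diffeomorphisms is precisely $\R[[f_1,f_2]]$ (matching the $p,q$-coefficients valued in $\R[[f_1,f_2]]$), so the surviving parameters are exactly $c \in \R_+$ and $p \in \langle f_1,f_2\rangle$.

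**The main obstacle** I expect is the passage from the \emph{infinitesimal} statement (cohomology classifies first-order deformations) to the \emph{finite} formal statement (every unit rescaling is formally equivalent to a normal form). This requires an integration/obstruction argument: one must check that the infinitesimal classification lifts order-by-order without obstruction, which typically follows from the explicit splitting in Theorem \ref{main theorem} together with a Moser-type or homotopy argument showing that exact directions can be integrated to genuine formal diffeomorphisms. Verifying that the diffeomorphism-generated directions are exactly the $\star$-images of the $\d(f_1\zeta_i)$, $\d\zeta_i$, $\d(a_i x_i)\wedge \d f_1$ terms, and that no higher-order obstruction forces additional invariants beyond $c$ and $p \in \langle f_1,f_2\rangle$, is the delicate part of the argument and is where the precise structure of the cohomology computed above must be invoked.
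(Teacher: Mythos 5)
Your overall skeleton does match the paper's proof: you reduce to comparing $g\pi$ (equivalently $g^{-1}\pi$) with $\pi$ for a formal unit $g$, you invoke the degree-$2$ computation of Theorem \ref{main theorem} to argue that only the $\R[[f_1,f_2]]$-part of $g$ survives as an invariant, and you integrate the infinitesimal statement order by order via a Moser-type argument, which is exactly the structure of the paper's proof (the homological normal form $g\pi = q\pi + \d_\pi X$ followed by the inductive flow argument). You also correctly flag the integration step as requiring an order-by-order lifting.

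However, there is a genuine error at the heart of your middle step. You assert that the remaining generators $\d(f_1\zeta_i)$, $\d\zeta_i$ and $\d(a_ix_i)\wedge\d f_1$ ``are realized by diffeomorphisms (Hamiltonian or gauge directions) and hence do not change the formal class.'' This is false: by Theorem \ref{main theorem} these are representatives of \emph{nontrivial} classes in $H_2(\R^4,\pi)$, so they are precisely \emph{not} of the form $\delta_\pi\gamma$ (equivalently $\d_\pi X$), i.e.\ not tangent to the diffeomorphism orbit; if they were, the theorem's description of $H_2$ would be wrong. The statement your argument actually needs is different and is not a formal consequence of the theorem: one must show that a cycle of conformal type $g\,\d f_1\wedge\d f_2$ has \emph{vanishing components} along all generators except $q\,\d f_1\wedge\d f_2$ with $q\in\R[[f_1,f_2]]$, and moreover that the exact part can be written as $\delta_\pi\gamma$ with $\iota_{\star^{-1}\gamma}(\d f_i)=0$, i.e.\ that the primitive corresponds to a vector field $X\in\mathfrak{X}^1_{f,2}$ tangent to the fibers. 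This is where the paper does real work, rerunning the degree-$2$ argument of Section \ref{section: homology in degree 2} and the first part of Lemma \ref{lemma: vector field normal form} to force $p=p_i=q_i=a_i=0$. The fiber-tangency of $X$ is not cosmetic: it guarantees $\Lie_X f_j=0$, so the flow in the inductive Moser step preserves $f_1,f_2$ and does not disturb the already-normalized terms $q_{2j}\in\R[[f_1,f_2]]$. Without this computation your factorization $g=(c+p)\cdot u$, with $u$ ``absorbed by a formal Poisson diffeomorphism,'' is unsupported, so the proposal as written has a gap exactly where the proof is hardest.
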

Note that the space of forms $\Omega^\bullet$ comes naturally equipped with another differential, the de-Rham differential $\d$. One has the relation
\begin{align*}
    \dif\circ \delta_{\pi} +\delta_{\pi}\circ \dif =0.
\end{align*}
Therefore we have a bidifferential complex $(\Omega^{\bullet}_f,\dif,\delta_{\pi})$ and an induced differential complex in Poisson homology $(H_{\bullet}(\R^4,\pi),\dif)$. It's cohomology are described as follows.
\begin{corollary}\label{corollary: de-rham cohom}
    For the complex $(H_{\bullet}(\R^4,\pi),\dif)$ we find
    \begin{align*}
        H_{DR}^{k}(H_{\bullet}(\R^4,\pi),\dif)=\begin{cases}
            \R &\text{ if } k=0\\
            0& \text{ else.}
        \end{cases}
    \end{align*}
\end{corollary}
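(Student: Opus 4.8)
The plan is to make the induced complex completely explicit using the unique representatives from Theorem \ref{main theorem} and then read off kernels and images degree by degree. Recall first that, since $\dif\circ\delta_\pi+\delta_\pi\circ\dif=0$, the de Rham differential maps $\delta_\pi$-cycles to $\delta_\pi$-cycles and $\delta_\pi$-boundaries to $\delta_\pi$-boundaries, so it descends to a well-defined operator $\dif\colon H_k(\R^4,\pi)\to H_{k+1}(\R^4,\pi)$; thus $H_{DR}^k(H_\bullet(\R^4,\pi),\dif)$ is the cohomology of the five-term complex $H_0\xrightarrow{\dif}H_1\xrightarrow{\dif}H_2\xrightarrow{\dif}H_3\xrightarrow{\dif}H_4$.

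The bottom of the complex is immediate. For $p\in\R[[f_1,f_2]]$ and $a_i\in\R[[x_2^2,x_4]]$ one has
\[ \dif\Big(p+\sum_{i=1}^4 a_ix_i\Big)=\frac{\partial p}{\partial f_1}\,\dif f_1+\frac{\partial p}{\partial f_2}\,\dif f_2+\sum_{i=1}^4\dif(a_ix_i), \]
which is already in the normal form of $H_1$ (with $q_j=\partial p/\partial f_j\in\R[[f_1,f_2]]$ and all $p_j=b_i=0$). By uniqueness of the representatives this class vanishes iff $\partial p/\partial f_1=\partial p/\partial f_2=0$ and every $a_i=0$, i.e.\ iff $p$ is constant and the $a_i$ vanish. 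Hence $\ker(\dif\colon H_0\to H_1)=\R$ and $H^0_{DR}=\R$.

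For the higher degrees the scheme is identical: evaluate $\dif$ on each generator of Theorem \ref{main theorem}, rewrite the output in the normal form of the next degree, and compare. Many reductions are built into the chosen generators: the closed ones $\dif(a_ix_i)$, $\dif\zeta_i$, $\dif(f_1\zeta_i)$, $\dif(a_ix_i)\wedge\dif f_1$, $\dif f_1\wedge\dif\zeta_i$ and $\dif f_1\wedge\dif f_2$ are annihilated, while terms like $\dif f_1\wedge\zeta_i$ are rewritten through $\dif(f_1\zeta_i)=\dif f_1\wedge\zeta_i+f_1\dif\zeta_i$. The remaining contributions, coming from $\dif$ applied to $p_j\zeta_j$, to $\zeta_1\wedge\zeta_2$, and to $\zeta_2\wedge\dif\zeta_i$, produce wedges such as $\dif f_2\wedge\zeta_j$, $\zeta_1\wedge\dif\zeta_2$ and $\dif\zeta_1\wedge\dif\zeta_2$ that are not yet normal-form generators; expressing these in normal form modulo $\delta_\pi$-boundaries uses exactly the structural identities among $f_1,f_2,\zeta_1,\zeta_2,\mu$ and the description of $\delta_\pi$-boundaries from the proof of Theorem \ref{main theorem}. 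Carrying this out one verifies that in each degree $k\ge1$ the kernel of $\dif$ equals the image of $\dif$ from the previous degree, and that the top map $\dif\colon H_3\to H_4$ is onto $\R[[f_1,f_2]]\mu$, giving $H^4_{DR}=0$. This normal-form bookkeeping — and in particular pinning down the nonobvious reductions above — is the main obstacle.

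Finally, the answer is precisely what the formal Poincaré lemma predicts, which serves both as motivation and as a consistency check. The complex $(\Omega^\bullet_f,\dif)$ is the complex of formal differential forms at the origin, with $H^\bullet_{DR}(\Omega^\bullet_f,\dif)=\R$ concentrated in degree $0$. Writing $Z_\bullet=\ker\delta_\pi$, $B_\bullet=\operatorname{im}\delta_\pi$ and $H_\bullet=Z_\bullet/B_\bullet$, the anticommutation of $\dif$ and $\delta_\pi$ makes $B_\bullet\subseteq Z_\bullet\subseteq\Omega^\bullet_f$ a filtration by $\dif$-subcomplexes with $\Omega^\bullet_f/Z_\bullet\cong B_{\bullet-1}$, so the two short exact sequences $0\to Z_\bullet\to\Omega^\bullet_f\to B_{\bullet-1}\to 0$ and $0\to B_\bullet\to Z_\bullet\to H_\bullet\to 0$ of $\dif$-complexes yield long exact sequences in $\dif$-cohomology anchored on $H^\bullet_{DR}(\Omega^\bullet_f,\dif)=\R$. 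These explain conceptually why the cohomology of $(H_\bullet(\R^4,\pi),\dif)$ must concentrate in degree $0$ and provide a structural check on the explicit computation.
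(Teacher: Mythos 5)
Your degree-zero computation is correct and is exactly the paper's argument ($\d$ maps representatives to representatives, and uniqueness of the representatives in Theorem \ref{main theorem} identifies the kernel with $\R$). But beyond degree zero you have an outline, not a proof: you describe the scheme (``evaluate $\d$ on each generator, rewrite in normal form, compare'') and then yourself flag the required reductions as ``the main obstacle''. That deferred work is the entire content of the paper's proof. For instance, surjectivity of $\d:H_3\to H_4$ comes from the explicit identity
\begin{align*}
\d \Big(\sum_{i=1}^2 p_i\,\zeta_2\wedge\d\zeta_i+q_i\,\d f_1\wedge\d\zeta_i\Big)=\big(2p_2+f_2(\partial_xp_1+\partial_yp_2)+f_1(\partial_xp_2-\partial_yp_1)\big)\mu,
\end{align*}
and the kernel in degree $3$ is pinned down by inverting $2+x\partial_x+y\partial_y$ on $\R[[x,y]]$. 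More seriously, checking that kernels equal images in degrees $1,2,3$ is not pure normal-form bookkeeping on representatives: one must decide when a cycle differs from an image element by a $\delta_\pi$-boundary, and in the paper this requires wedging with $\d f_1,\d f_2$, contracting with $T_1,T_2$, and invoking Lemma \ref{lemma: dgt zero} and Lemma \ref{lemma: vector field normal form}. None of this appears in your sketch.

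The ``conceptual'' fallback you offer does not close the gap, because the inference is false. The two short exact sequences $0\to Z_\bullet\to\Omega^\bullet_f\to B_{\bullet-1}\to 0$ and $0\to B_\bullet\to Z_\bullet\to H_\bullet\to 0$, together with the formal Poincar\'e lemma, do \emph{not} force the cohomology of $(H_\bullet,\d)$ to concentrate in degree $0$: the resulting long exact sequences leave the groups $H^k(B_\bullet,\d)$ as free parameters, and computing those is equivalent to the explicit work above. A counterexample to your claimed implication: for the formal symplectic structure $\pi=\partial_x\wedge\partial_y$ on $\R^2$, both exact sequences hold and the formal de Rham cohomology is $\R$ in degree $0$, yet $H_k(\R^2,\pi)\cong H^{2-k}(\R^2,\pi)$ is $\R$ concentrated in homological degree $2$ and zero otherwise, so the complex $(H_\bullet,\d)$ has cohomology $\R$ in degree $2$, not in degree $0$. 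Hence the conclusion of Corollary \ref{corollary: de-rham cohom} genuinely depends on the specific Poisson structure and cannot follow from your structural argument; the degree-by-degree computations you postponed are unavoidable.
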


\subsection*{Strategy of the proof of Theorem \ref{main theorem}}

Roughly speaking the proof of Theorem \ref{main theorem} is as follows. We start by computing the kernel of $\delta_{\pi}$. In turn this allows us to compute the dimension of the Poisson homology groups. Together with the Poisson homology groups these kernels fit into short exact sequences. This allows us to compute the Hilbert-Poincare series associated to the formal Poisson homology groups, and hence obtain their dimensions. The last step consists of finding an explicit set of generators of the right dimension.

In principle all the arguments consist of explicit computations. 
However, a recurring complication in these computations is that the following property does not always hold.
Given $\beta\in \Omega^k_f$, does
    \[ \d f_1 \wedge \d f_2 \wedge \beta =0 \quad \text{ imply that } \quad \beta = \sum_{i=1}^2 \d f_i \wedge \alpha_i \quad \text{ for some }\quad \alpha _i \in \Omega^{k-1}_f ?\]
The failure of this property to hold is measured by the groups $\mathcal{D}^k(\d f_1, \d f_2)$, first defined by \cite{Rha54} and generalized by Saito \cite{Sai76}.
It turns out that for $k=1$ this group vanishes. On the other hand $\mathcal{D}^2(\d f_1,\d f_2)$ is non-zero, which plays an important role in the computations. 

There appears to be a close relation between the generators of $\mathcal{D}(\d f_1, \d f_2)$ and those of $H_\bullet(\R^4,\pi)$. For example, $\d \zeta_i$ appear both in Theorem \ref{main theorem}, and as the generators of $\mathcal{D}^2(\d f_1, \d f_2)$, see Proposition \ref{proposition: iso division group}. Similarly, the appearance of the classes represented by coefficients $a_i$ and $b_i$ in $\R[[x_2^2,x_4]]$ appears to be related to the description of the division groups. We do not explore this connection further but leave it as an option question:

\begin{question}
    Let $f_1,\dots , f_{n-2}:\R^n\to \R$ be smooth functions. What is the precise relation between the groups $\mathcal{D}^i(\d f_1,\dots, \d f_{n-2})$ and the formal Poisson (co)homology of the Jacobi-Poisson structure on $\R^n$ induced by the functions $f_1,\dots , f_{n-2}$?
\end{question}

\subsection*{Organization of the paper}

Recall from \eqref{eq: iso cohom} that the formal Poisson homology and cohomology groups are isomorphic. In Section \ref{sec:interpretation results} we start by giving a cohomology description of the main theorem, and a geometric interpretation of some of the generators. We also partially describe the Gerstenhaber algebra structure of the groups $H^\bullet(\R^4,\pi)$.

In Section \ref{sec:preliminaries} we discuss some preliminary notions from Poisson geometry (Section \ref{sec:prel poisson}), and algebra (Section \ref{sec:prel algebra}). Most notably, the definition of Jacobi-Poisson structures and an explicit description of $\delta_\pi$. Most of our computations involve power series, and are done degree wise. To this end we recall, in Section \ref{sec:prel hilbertseries}, the definition of Hilbert-Poincare series, which we later use to compute the rank of the Poisson homology groups in each homogeneous degree. We also recall some standard facts from algebra concerning regular sequences in Section \ref{sec:prel regularsequences}, and standard bases in Section \ref{sec:prel standardbases}.

In Section \ref{sec: division grous} we recall the notion of division groups, which measure the failure for the division property to hold. The main result of this section is Proposition \ref{proposition: iso division group} giving an explicit description of $\mathcal{D}^2(\d f_1,\d f_2)$, the second division group associated to the coefficients of $\d f_1$ and $\d f_2$.

Using these results, Section \ref{sec: ker poisson diff} computes the kernel of the Poisson differential $\delta_\pi$. The results are collected in Proposition \ref{proposition: kernel differential}. The proof of this proposition, which is a long but more or less straightforward computation, is the core of the paper. 
In Section \ref{sec: poincare series} we use the description of $\ker \delta_\pi$ to compute the Hilbert-Poincare series of the Poisson homology groups, see Proposition \ref{proposition: Poincare series}.

The proof of Theorem \ref{main theorem}, which combines the results from Section \ref{sec: ker poisson diff} and Section \ref{sec: poincare series}, is given in Section \ref{sec:proof main thm}. We check that the generators in Theorem \ref{main theorem} are indeed in the kernel. Then, using the Hilbert-Poincare series we see that the set of generators have the right dimension. The remainder of the proof consists of showing that none of the generators are in the image of $\delta_\pi$. 

Lastly, Section \ref{sec:deformations} and Section \ref{sec:de-rham cohom} respectively contain the proofs of Corollary \ref{corollary: deformations} and Corollary \ref{corollary: de-rham cohom}.

\subsection*{Acknowledgements}
We would like to thank Ioan M\u{a}rcu\cb{t} for bringing the problem to our attention and useful discussions. L.\ Toussaint is funded by the Dutch Research Council (NWO) on the project ``proper Fredholm homotopy theory'' (OCENW.M20.195) of the research programm Open Competition ENW M20-3. F.\ Zeiser would like to thank the Max Planck Institute for Mathematics in Bonn for its hospitality and financial support during the early stages of this project and the group of Thomas Rot at Vrije Universiteit Amsterdam for its hospitality and support during a research visit.

\section{An interpretation of the results}\label{sec:interpretation results}

In this section we review theorem \ref{main theorem} in view of \eqref{eq: iso cohom}, stating the corresponding result for formal Poisson cohomology and interpreting the result. In the first section we discuss the foliation $\F$ associated with $\pi$ and its relation with $H^0(\R^4,\pi)$. The second section is devoted to the Lie algebra $H^1(\R^4,\pi)$ and in the third section we look at $H^2(\R^4,\pi)$ and the interpretation of some its classes in terms of deformations of $\pi$. Finally, in the last section we describe the higher Poisson cohomology groups and some of the additional algebraic structure present for Poisson cohomology.  

\subsection{The foliation and the Casimir functions}
For the geometric interpretation let's take a closer look at the foliation $\F$ induced by $\pi$. From the introduction we know that $\F$ is closely related to the fibers of the fibration $f$ from \eqref{eq: lf}. In particular, for $(x,y)\ne 0 \in \R^2$, the fiber $f^{-1}(x,y)$ is connected and diffeomorphic to a cylinder. 
\[ f^{-1}(x,y)\simeq S^1\times \R \]
The preimage of the origin under $f$ consists of three leaves. The set $f^{-1}(0)\setminus \{0\}$ has two connected components both of which are diffeomorphic to cylinders. The origin $0\in \R^4$ is the sole critical point of $f$ and is therefore a leaf of dimension $0$. 
\[ f^{-1}(0) = S^1\times \R \cup \{ 0 \} \cup S^1\times \R\]
The leaf space $\R^4/\F$ is, as a topological space, given by $\R^2$ where we have three distinct points instead of the origin. However, continuous functions on $\R^4/\F$ can not distinguish these three points. Viewing $f_1$ and $f_2$ as the coordinate functions on the leaf space we obtain for formal functions on the leaf space precisely what we expect, i.e./ power series in these coordinates. That is, in degree $0$ we have 
\[ H^0(\R^4,\pi) = \R[[f_1,f_2]].\]
\begin{remark}
    In fact one can use the discussion above to show that we have an isomorphism
    \[ C^{\infty}(\R^2) \to H^0(\R^4, \pi), \quad g\mapsto g(f_1,f_2).\]
    Here $H^0(\R^4, \pi)$ refers to the Poisson cohomology over smooth multivector fields (see \cite{MZ23}).
\end{remark}
\subsection{The Lie algebra $H^1(\R^4,\pi)$}
To describe the Poisson cohomology group in degree one let $E_i$ and $T_i$, $i=1,2$, be the real and imaginary part of the complex vector fields
\begin{align*}
    E=z\partial_z+w\partial_w \qquad \text{ and } \qquad T:= z\partial_w -w\partial_z;
\end{align*}
where $(z,w)=(x_1+ix_2,x_3+ix_4)$.
\begin{corollary}
    In degree one $H^1(\R^4,\pi)$ is the free $H^0(\R^4,\pi)$-module generated by
    \begin{align*}
        E_1, E_2, T_1 ,T_2.
    \end{align*}
\end{corollary}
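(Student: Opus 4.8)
The plan is to deduce this corollary directly from the degree-one part of Theorem \ref{main theorem} together with the duality isomorphism \eqref{eq: iso cohom}. First I would translate the $H_1$ description into cohomological language: applying $\star = \mu^\flat$ sends $H_1(\R^4,\pi)$ to $H^{3}(\R^4,\pi)$, not $H^1$, so I must instead apply the inverse isomorphism to the degree-$3$ homology group $H_3(\R^4,\pi)$, whose representatives are $\sum_{i=1}^2 p_i\,\zeta_2\wedge\d\zeta_i + q_i\,\d f_1\wedge\d\zeta_i$ with $p_i,q_i\in\R[[f_1,f_2]]$. Since $\star^{-1}=(\mu^\flat)^{-1}$ carries $\Omega^3_f$ to $\mathfrak{X}^1_f$, this realizes $H^1(\R^4,\pi)$ as a free $\R[[f_1,f_2]]=H^0(\R^4,\pi)$-module of rank $4$. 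The content of the corollary is therefore the identification of an explicit free basis, and the claim is that $E_1,E_2,T_1,T_2$ is such a basis.

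Next I would verify that $E_1,E_2,T_1,T_2$ are Poisson cocycles, i.e.\ $\d_\pi$-closed. The cleanest route is to exploit the complex structure: writing $E=z\partial_z+w\partial_w$ (the radial/Euler field) and $T=z\partial_w-w\partial_z$, I would show these complex fields are infinitesimal symmetries of $\pi$, so that their real and imaginary parts are Poisson vector fields. Concretely $E$ generates the flow $(z,w)\mapsto(e^tz,e^tw)$ and $T$ generates a rotation in the $(z,w)$-plane; I would check that $\pi=\pi_{f,\mu_M,\mu_N}$ transforms homogeneously (or is invariant) under these flows using that $f=z^2+w^2$ is homogeneous of degree $2$ and equivariant under the $\mathrm{U}(1)$-action generated by $T$, whence $[\pi,E]$ and $[\pi,T]$ are proportional to $\pi$ or vanish, giving $\d_\pi E_i=\d_\pi T_i=0$. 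This is a short homogeneity computation rather than a long one.

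The main step is to show that $\{E_1,E_2,T_1,T_2\}$ generates $H^1$ freely over $H^0$ and that no nontrivial $H^0$-combination is a coboundary. Having already established via the Hilbert--Poincar\'e series (Proposition \ref{proposition: Poincare series}) and Theorem \ref{main theorem} that $H^1$ is a free $\R[[f_1,f_2]]$-module of rank $4$, it suffices to prove that the four classes are $\R[[f_1,f_2]]$-linearly independent in $H^1$. I would do this by writing each of $E_1,E_2,T_1,T_2$ explicitly in the $x_i$-coordinates, dualizing via $\star$ into $\Omega^3_f$, and matching them against the basis $\{\zeta_2\wedge\d\zeta_i,\ \d f_1\wedge\d\zeta_i\}_{i=1,2}$ of $H_3$ given by the theorem. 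Showing that the resulting $4\times 4$ transition matrix over $\R[[f_1,f_2]]$ is invertible (e.g.\ that its leading term is nondegenerate) then forces the change of basis, and independence follows.

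The hard part will be the explicit dualization and the bookkeeping that identifies $\star E_i,\star T_i$ with $\R[[f_1,f_2]]$-combinations of the theorem's generators $\zeta_2\wedge\d\zeta_i$ and $\d f_1\wedge\d\zeta_i$. In particular I expect to need the relations among $\zeta_1,\zeta_2,\d f_1,\d f_2$ and the images of $E,T$ under $\mu^\flat$; the vector fields $E,T$ are geometrically natural (Euler field and the generator of the Hopf-type circle action) but their contractions with $\mu$ must be rewritten in terms of the $\zeta_i$ and $\d f_j$, and confirming that the transition matrix is a unit in the ring of $4\times 4$ matrices over $\R[[f_1,f_2]]$ is where the real verification lies. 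Once this invertibility is checked, freeness and the absence of nontrivial coboundaries are immediate from the module structure already recorded in Theorem \ref{main theorem}.
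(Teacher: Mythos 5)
Your proposal takes essentially the same route as the paper: dualize $H^1(\R^4,\pi)$ into $H_3(\R^4,\pi)$ via $\star$ and match $\star E_i,\star T_i$ against the generators from Theorem \ref{main theorem}; the paper's entire proof consists of the explicit relations \eqref{eq: rel E, zeta}--\eqref{eq: rel T, zeta}, which show that your anticipated $4\times 4$ transition matrix is in fact diagonal with nonzero constant entries (e.g.\ $\star E_1=\zeta_2\wedge\d\zeta_2$, $\star T_1=-\tfrac{1}{4}\d f_1\wedge\d\zeta_1$), so invertibility is immediate and no separate cocycle verification is needed. One small correction to your wording: $\R[[f_1,f_2]]$-linear independence alone would \emph{not} suffice to conclude generation (four independent elements of a free rank-$4$ module can span a proper submodule, e.g.\ $f_1e_1,e_2,e_3,e_4$); it is the invertibility of the transition matrix, which you rightly make the crux of the argument, that yields both freeness and generation.
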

This follows immediately from the relations
\begin{align}
    \star E_1 = \zeta_i\wedge \d \zeta_i, \quad \star E_2 = -\zeta_1\wedge \d \zeta_2 = \zeta_2 \wedge \d \zeta_1,\qquad \label{eq: rel E, zeta}\\ 
    \star T_1= -\frac{1}{4}\d f_i\wedge \d \zeta_i, \quad \star T_2= -\frac{1}{4}\d f_2\wedge \d \zeta_1 =\frac{1}{4} \d f_1 \wedge \d \zeta_2 \label{eq: rel T, zeta} 
\end{align}
The Lie bracket on $H^1(\R^4,\pi)$ is induced by the Lie bracket for vector fields. In order to describe the Lie algebra structure, we note that 
\[ [E_1,E_2] = [T_1,T_2] =[E_i,T_j]=0.\]
Hence the brackets in cohomology are fully described by the relations
\begin{align}\label{eq: relation E, T and f}
    \Lie_{E_1}(f_i)= f_i  , \quad \Lie_{E_2}f_1=f_2,\quad \Lie_{E_2} f_2=-f_1, \quad \Lie_{T_i}f_j=0.
\end{align}
Geometrically, \eqref{eq: relation E, T and f} means that the Lie algebra
\[ \mathfrak{g}_N:=\{p_iE_i \ | \ p_i \in \R[[f_1,f_2]]\}\]
describes Poisson vector fields who project non-trivially to the leaf space, i.e.\ $E_1$ is mapped to the Euler vector field and $E_2$ to the rotational vector field. The vector fields $T_i$ preserve the leaves and form a commutative Lie subalgebra of Poisson vector fields tangent to $\F$ which we denote by
\[\mathfrak{g}_T:= \{p_iT_i \ | \ p_i \in \R[[f_1,f_2]]\} \]
As a Lie algebra, the first formal Poisson cohomology is given by 
\[H^1(\R^4,\pi)\simeq \mathfrak{g}_T\rtimes \mathfrak{g}_N.\]
\vspace*{-15pt}
\subsection{$H^2(\R^4,\pi)$: Infinitesimal deformations} 

Combining Theorem \ref{main theorem} with the isomorphism from Equation \eqref{eq: iso cohom}, we obtain the following description of the formal Poisson cohomology in degree $2$. Here (and in the rest of this section) we use the following notation:
\[ W_i := \star^{-1}(\d \zeta_i).\]

\begin{corollary}\label{corollary: second PCH}
$H^2(\R^4,\pi)$ has unique representatives of the form
\begin{equation*}
    p E_1\wedge E_2 +q\pi + \sum_{i=1}^2 p_iE_i\wedge T_1 + q_iW_i+\star^{-1}\left(\sum_{i=1}^4\d(a_ix_i)\wedge \d f_1\right)
\end{equation*}
where $p,p_i,q,q_i \in  \R[[f_1,f_2]]$ and $a_i \in \R[[x_2^2,x_4]]$.
\end{corollary}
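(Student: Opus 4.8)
The plan is to transport the homology description of Theorem~\ref{main theorem} to cohomology through the isomorphism \eqref{eq: iso cohom}. Since $(\R^4,\pi)$ is unimodular, we may choose $\mu$ with $X_\mu=0$, so by \eqref{eq:partialpi_identity} the map $\star^{-1}$ satisfies $\star^{-1}\circ\delta_\pi=\d_\pi\circ\star^{-1}$, hence is a chain isomorphism $(\Omega^\bullet_f,\delta_\pi)\to(\X^\bullet_f,\d_\pi)$ realizing $H_2(\R^4,\pi)\diffto H^2(\R^4,\pi)$. Concretely, I would apply $\star^{-1}$ to the representative of a class in $H_2(\R^4,\pi)$ provided by Theorem~\ref{main theorem} and rewrite the resulting bivector field in terms of $E_1\wedge E_2$, $\pi$, $E_i\wedge T_1$, $W_i$, together with the leftover $\star^{-1}\!\big(\sum_i\d(a_ix_i)\wedge\d f_1\big)$. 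Because $\star$ is $\SR$-linear, every coefficient in $\R[[f_1,f_2]]$ (resp.\ $\R[[x_2^2,x_4]]$) passes unchanged through $\star^{-1}$, so it suffices to identify the image of each of the basic $2$-forms occurring in the representative.

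Two of these identifications are immediate: from $\star(\pi)=\d f_1\wedge\d f_2$ we get $\star^{-1}(\d f_1\wedge\d f_2)=\pi$, and by the very definition $W_i=\star^{-1}(\d\zeta_i)$; thus the terms $q\,\d f_1\wedge\d f_2$ and $q_i\,\d\zeta_i$ become $q\pi$ and $q_iW_i$, while the term $\sum_i\d(a_ix_i)\wedge\d f_1$ admits no simpler expression and is kept as $\star^{-1}\!\big(\sum_i\d(a_ix_i)\wedge\d f_1\big)$. The remaining content is the pair of identities
\begin{align*}
    \star^{-1}(\zeta_1\wedge\zeta_2)=E_1\wedge E_2,\qquad \star^{-1}\big(\d(f_1\zeta_i)\big)=E_i\wedge T_1 \quad (i=1,2),
\end{align*}
which I would verify through the contraction formula $\star(X\wedge Y)=\iota_Y\iota_X\mu=\iota_Y(\star X)$. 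For the first one computes $\iota_{E_2}(\star E_1)$ using \eqref{eq: rel E, zeta} and checks it equals $\zeta_1\wedge\zeta_2$; for the second one computes $\iota_{T_1}(\star E_i)$ from \eqref{eq: rel E, zeta} and compares with $\d(f_1\zeta_i)=\d f_1\wedge\zeta_i+f_1\,\d\zeta_i$. Any nonzero scalar appearing here is harmless: since $p\mapsto cp$ is a bijection of $\R[[f_1,f_2]]$, it does not alter the stated form of the representative.

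The main obstacle is the identity $\star^{-1}(\d(f_1\zeta_i))=E_i\wedge T_1$, where one must track carefully the interaction of the function $f_1$, the one-forms $\zeta_i$, and the vector fields $E_i,T_1$. In particular one has to check that exactly $T_1$ (and not $T_2$) arises, that the index $i$ on the left matches the index $i$ on $E_i$ on the right, and that the two pieces $\d f_1\wedge\zeta_i$ and $f_1\,\d\zeta_i$ assemble correctly under the contraction; the relations \eqref{eq: rel T, zeta} are the natural cross-check, since $\star T_1$ is itself expressed through $\d f_i\wedge\d\zeta_i$. Once these identities are established, applying $\star^{-1}$ termwise converts the representative of Theorem~\ref{main theorem} into the asserted expression, and because $\star^{-1}$ is a linear isomorphism the uniqueness of representatives in degree $2$ transfers verbatim from homology to cohomology, which completes the proof.
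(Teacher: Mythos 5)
Your strategy coincides with the paper's: transport the degree-$2$ part of Theorem \ref{main theorem} through $\star^{-1}$ and identify the images of the basic $2$-forms. The identifications $\star^{-1}(\d f_1\wedge\d f_2)=\pi$ and $\star^{-1}(\d\zeta_i)=W_i$ are correct, and the sign discrepancy in $\star^{-1}(\zeta_1\wedge\zeta_2)=-E_1\wedge E_2$ (the paper's \eqref{eq: rel e12 zeta12}) is indeed harmless for the reason you give.

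The gap is your second key identity: $\star^{-1}\big(\d(f_1\zeta_i)\big)=E_i\wedge T_1$ is false, and not merely up to a scalar. Carrying out the contraction you propose (using $4\iota_{T_1}\zeta_1=f_1$ and $\Lie_{T_1}\zeta_1=0$, hence $\iota_{T_1}\d\zeta_1=-\tfrac14\d f_1$, from \eqref{eq: rel T, zeta contract}) produces the paper's identities
\begin{align*}
\star^{-1}(\d f_1\wedge\zeta_1) = -4E_1\wedge T_1+f_1W_1,\qquad \star^{-1}(\d f_1\wedge\zeta_2) = -4E_2\wedge T_1+f_2W_1,
\end{align*}
and therefore
\begin{align*}
\star^{-1}\big(\d(f_1\zeta_1)\big) = -4E_1\wedge T_1+2f_1W_1,\qquad \star^{-1}\big(\d(f_1\zeta_2)\big) = -4E_2\wedge T_1+f_2W_1+f_1W_2.
\end{align*}
The leftover summands $2f_1W_1$ and $f_2W_1+f_1W_2$ carry non-constant coefficients in $\R[[f_1,f_2]]$, so they are not covered by your remark that $p\mapsto cp$ is a bijection, and uniqueness of representatives does not transfer ``verbatim'' termwise as you claim. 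The repair is short but must be said: these leftovers lie in the $\R[[f_1,f_2]]$-span of $W_1,W_2$, which occurs in the asserted normal form with free coefficients $q_1,q_2$, so applying $\star^{-1}$ to the homology representative induces the coefficient change
\begin{align*}
(p,q,p_1,p_2,q_1,q_2,a_i)\ \longmapsto\ \big(-p,\ q,\ -4p_1,\ -4p_2,\ q_1+2f_1p_1+f_2p_2,\ q_2+f_1p_2,\ a_i\big),
\end{align*}
which is triangular with invertible diagonal entries, hence a bijection; only through this bijection do existence and uniqueness of representatives pass from Theorem \ref{main theorem} to the corollary. This absorption into the $q_iW_i$-terms is precisely what the paper's displayed identities following the corollary record, and it is the one step missing from your argument.
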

This follows from the identities
\begin{align}  
    \star^{-1} (\zeta_1\wedge \zeta_2) &\, = -E_1\wedge E_2 \label{eq: rel e12 zeta12}\\
    \star^{-1}(\d f_1\wedge \zeta_1) = -4E_i\wedge T_i +f_iW_i  \ &\text{ and }\ 
    \star^{-1}(\d f_1\wedge \zeta_2)= 4E_1\wedge T_2 +f_1W_2 = -4E_2\wedge T_1+f_2W_1\nonumber
\end{align} 
To study this result in more detail we use the filtration on $\mathfrak{X}^{\bullet}_f$ induced by $f$. We set
\begin{equation}\label{eq:filtration}
    \mathfrak{X}^{\bullet}_{f,1}:= \ker \iota_{\d f_1\wedge \d f_2} \cap\mathfrak{X}^\bullet_{f} \quad \text{ and }\quad \mathfrak{X}^\bullet _{f,2}:= \ker \iota_{\d f_1}\cap \ker \iota_{\d f_2} \cap\mathfrak{X}^\bullet_{f}
\end{equation}
Note that, since $f_1$ and $f_2$ are Casimir functions, $\d_{\pi}$ preserves $\mathfrak{X}^\bullet_{f,1}$ and $ \mathfrak{X}^\bullet_{f,2}$, respectively.
Infinitesimal deformations which preserves $\F$ are governed by cohomology classes represented by elements in $\mathfrak{X}^2_{f,2}$ up to coboundaries. It follows from the proof of the degree $2$ part of Theorem \ref{main theorem}, together with the identity
\begin{align}\label{eq: contraction vs wedge}
    \iota_{\alpha}(\star ^{-1} \beta)= (-1)^{k(4-k)}\star^{-1} (\beta\wedge \alpha)
\end{align} 
for $\alpha \in \Omega^k, \beta\in \Omega^l$ and $0\le k\le n-l$, that the only such cohomology classes are of the form $q\pi$ for $q\in \R[[f_1,f_2]]$. All such classes can be realized by a formal Poisson deformation of $\pi$ of the form
\[ \pi_t:= (1+tq)\pi \]
More generally, from \eqref{eq: Poisson via fibration} we know that any deformation of the volume form $\mu$ induces a deformation of Poisson structures preserving the foliation. By Corollary \ref{corollary: deformations} we conclude that any Poisson structure obtained in this way is formally equivalent to one as described above. 

On the other end of the spectrum of deformation we have those which induce a symplectic structure almost everywhere. There are two different classes of such deformations. First, the deformations represented by elements in $\mathfrak{X}^{2}_{f}/\mathfrak{X}^{2}_{f,1}$. By Corollary \ref{corollary: second PCH} such deformations are infinitesimally described by the classes $pE_1\wedge E_2$. Indeed, any bivector 
\[ \pi_t:= \pi +tpE_1\wedge E_2\]
is a Poisson bivector whose rank is $4$ away from the origin. This can be easily seen from the identities
\[ \pi = -8\, T_1\wedge T_2 \quad \text{ and }\quad 16 E_1\wedge E_2\wedge T_1\wedge T_2 = (f_1^2+f_2^2)\partial_1\wedge \partial_2\wedge \partial_3\wedge \partial_4,\]
 the fact that $E_i$ and $T_j$ commute and since $T_j\in \mathfrak{X}^{1}_{f,2}$ by \eqref{eq: relation E, T and f}. The other deformations we want to consider here are of the form
 \[ \pi_{1,t} = \pi +tW_1 \quad \text{ and }\quad \pi_{2,t}=\pi+tW_2.\]
 These bivectors are Poisson and we obtain a symplectic structure on $\R^4$ for $t>0$ since
 \[ \pi\wedge W_i = 0 \quad \text{ and }\quad W_i\wedge W_i= 2\partial_1\wedge \partial_2\wedge \partial_3\wedge \partial_4.\]

\subsection{Higher degrees and the algebraic structure}
Next we consider deformations of the form
\begin{equation}\label{eq: deformation}
  \pi_{i,t} = \pi + tp W_i.  
\end{equation}
To this end we first need to compute $H^3(\R^4,\pi)$ and $H^4(\R^4,\pi)$.

From Theorem \ref{main theorem} we obtain the following:
\begin{corollary}
The formal Poisson cohomology of $\pi$ satisfy:
    \begin{itemize}
    \item the group $H^3(\R^4,\pi)$ has unique representatives of the form
    \begin{equation*}
        \sum_{j=1}^2 p_j E_j\wedge W_1 + q_j T_j\wedge W_1 + \sum_{i=1}^4 b_ix_i T_1\wedge W_1 +\star^{-1}\d(a_ix_i)
    \end{equation*}
    where $p_j,q_j \in  \R[[f_1,f_2]]$ and  $a_i,b_i \in \R[[x_2^2,x_4]]$;
    \item the group $H^4(\R^4,\pi)$ has unique representatives of the form
    \begin{equation*}
        \left(p +  \sum_{i=1}^4 a_ix_i\right) \partial_1\wedge \partial_2\wedge \partial_3\wedge \partial_4
    \end{equation*}
    where $p \in  \R[[f_1,f_2]]$, $a_i \in \R[[x_2^2,x_4]]$ and where we write $\partial_j=\partial_{x_j}$.
    \end{itemize}
\end{corollary}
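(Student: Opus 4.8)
The statement is a direct translation of the degree $0$ and degree $1$ parts of Theorem \ref{main theorem} under the duality isomorphism \eqref{eq: iso cohom}, so the plan is simply to push the explicit representatives through this isomorphism. Since $(\R^4,\pi)$ is unimodular and the standard volume form $\mu$ satisfies $\star\pi=\d f_1\wedge\d f_2$ with vanishing modular vector field, the relation \eqref{eq:partialpi_identity} shows that $\star^{-1}$ is a chain isomorphism from $(\Omega^{4-\bullet}_f,\delta_\pi)$ to $(\mathfrak{X}^\bullet_f,\dif_\pi)$. Consequently $H^3(\R^4,\pi)\cong H_1(\R^4,\pi)$ and $H^4(\R^4,\pi)\cong H_0(\R^4,\pi)$, and it suffices to take the unique representatives of $H_1$ and $H_0$ from Theorem \ref{main theorem}, apply $\star^{-1}$, and rewrite the result in the basis $E_i,T_i,W_i$.

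For degree $4$ this is immediate: with the standard volume form one has $\star^{-1}(g)=g\,\partial_1\wedge\partial_2\wedge\partial_3\wedge\partial_4$, so a representative $p+\sum_i a_ix_i$ of $H_0$ maps precisely to $(p+\sum_i a_ix_i)\,\partial_1\wedge\partial_2\wedge\partial_3\wedge\partial_4$, which is the claimed description of $H^4$, with $p\in\R[[f_1,f_2]]$ and $a_i\in\R[[x_2^2,x_4]]$ carried over unchanged.

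The degree $3$ case requires only a short dictionary for the image under $\star^{-1}$ of the one-forms occurring in the $H_1$-representatives. Using the compatibility $\star(X\wedge Y)=\iota_X\star Y$, the definition $\star W_i=\d\zeta_i$, and the explicit formulas \eqref{eq: dual rot} for $\zeta_i$ together with the formulas for $E_i,T_i$ and $\dif f_i$, one computes $\iota_{E_j}\d\zeta_1=\zeta_j$ and $\iota_{T_j}\d\zeta_1=-\tfrac14\dif f_j$ for $j=1,2$, whence $\star^{-1}(\zeta_j)=E_j\wedge W_1$ and $\star^{-1}(\dif f_j)=-4\,T_j\wedge W_1$. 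The term $x_i\dif f_1$ then maps to $-4\,x_iT_1\wedge W_1$, while $\star^{-1}(\dif(a_ix_i))$ is kept as is. Substituting these into the representative $\sum_j p_j\zeta_j+q_j\dif f_j+\sum_i \dif(a_ix_i)+b_ix_i\dif f_1$ of $H_1$ and absorbing the constants $-4$ into the free coefficients $q_j$ and $b_i$ (which range over the same spaces $\R[[f_1,f_2]]$ and $\R[[x_2^2,x_4]]$) yields exactly the stated generating set for $H^3$.

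The argument is therefore essentially mechanical, and the only point requiring care is the sign-and-normalization bookkeeping in the dictionary above, i.e.\ fixing conventions for $\star$ and for the contraction of multivectors with forms and verifying the two identities $\iota_{E_j}\d\zeta_1=\zeta_j$ and $\iota_{T_j}\d\zeta_1=-\tfrac14\dif f_j$. A pleasant feature, which removes any genuine obstacle, is that this correspondence turns out to be diagonal: each type of one-form is sent to a single type of trivector with no residual $W_2$-contributions, so no relations from $\mathcal{D}^2(\dif f_1,\dif f_2)$ need to be invoked to reduce the output. Because $\star^{-1}$ is a chain isomorphism, the uniqueness of representatives established in Theorem \ref{main theorem} is transported verbatim to $H^3$ and $H^4$.
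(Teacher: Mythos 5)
Your proposal is correct and takes essentially the same route as the paper: both transport the $H_0$ and $H_1$ representatives of Theorem \ref{main theorem} through the unimodular duality $\star^{-1}$ of \eqref{eq: iso cohom}, using precisely the dictionary $\star^{-1}\zeta_j = E_j\wedge W_1$ and $\star^{-1}\dif f_j = -4\,T_j\wedge W_1$, which the paper states as bare identities and you verify via the contractions $\iota_{E_j}\dif\zeta_1=\zeta_j$ and $\iota_{T_j}\dif\zeta_1=-\tfrac14\dif f_j$. The only difference is that you derive these identities rather than quote them, which is a harmless (indeed welcome) elaboration of the same argument.
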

The statement follows from the following identities
\begin{align*}
    \star^{-1} \zeta_1 = E_1\wedge W_1=-E_2\wedge W_2 \quad \text{ and }\quad \star^{-1} \zeta_2 = E_1\wedge W_2=E_2\wedge W_1\\
    \star^{-1} \d f_1 = -4 T_1\wedge W_1=4T_2\wedge W_2 \quad \text{ and }\quad \star^{-1} \d f_2 = -4T_1\wedge W_2=-4T_2\wedge W_1
\end{align*}
Going back to the deformations from \eqref{eq: deformation} we obtain, for $i=1$ that:
\begin{align*}
    [\pi_{1,t},\pi_{1,t}]= 8t^2 p (\partial_xp T_1\wedge W_1 +\partial_yp T_2\wedge W_1) 
\end{align*}
In order to make the bivector Poisson, we would need to add a second order element in $t$, i.e. consider a deformation of the form
\[ \pi_{1,t} = \pi +tpW_1 + t^2 W\]
for some $W\in \mathfrak{X}^2_f$ such that
\[ [\pi ,W] = 4 p (\partial_xp T_1\wedge W_1 +\partial_yp T_2\wedge W_1).\]
However, the right side represents a non-trivial class in $H^3(\R^4,\pi)$ iff $p\in \R[[f_1,f_2]]$ is not constant and hence it is not possible to find such a $W$ for non-constant $p$. The same argument shows that $\pi_{2,t}$ does not define a deformation of Poisson structures.

\subsubsection*{The algebraic structure in cohomology}
The cohomology groups $H^{\bullet}(\R^4,\pi)$ carry a rich algebraic structure, as both the wedge product and the Schouten bracket
\begin{align*}
    \wedge: \mathfrak{X}^{k}\times \mathfrak{X}^{l} \to \mathfrak{X}^{k+l}\quad \text{ and }\quad [\cdot,\cdot ]: \mathfrak{X}^{k}\times \mathfrak{X}^{l} \to \mathfrak{X}^{k+l-1}
\end{align*}
descend to cohomology, inducing a Gerstenhaber algebra structure for Poisson cohomology:
\[\left(H^{\bullet}(\R^4,\pi),\wedge,[\cdot,\cdot]\right). \]
We distinguish two classes of representatives, those with coefficients in $\R[[f_1,f_2]]$ and those with coefficients in $\R[[x_2^2,x_4]]$. For the former, we note that the wedge product and the bracket can be easily deduced from the various relations described above. Describing the precise algebraic structure of representatives with coefficients in $\R[[x_2^2,x_4]]$ is harder and we do not attempt to describe them here. We only want to point out that the corresponding modules, as modules over $H^0(\R^4,\pi)$ are not free. 

Explicitly, we have:
\begin{proposition}\label{proposition: module structure}
For cohomology classes represented by some $a_i,b_i \in \R$ we have:
\begin{equation*}
    f_1a_ix_i = -2(x_2^2+x_4^2)a_ix_i \qquad \text{ and }\qquad f_1b_ix_i = -2(x_2^2+x_4^2)b_ix_i
\end{equation*}
in terms of the representatives in Theorem \ref{main theorem}. Moreover, we have the relations
\begin{equation*}
    a(f_1x_1+f_2x_2)=a(f_2x_1-x_2f_1)=a(f_1x_3+f_2x_4)=a(f_2x_3-f_1x_4)=0
\end{equation*}
in cohomology for $a\in \R$ and similarly for the classes represented by $b_i$.
\end{proposition}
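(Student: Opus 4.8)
The plan is to prove both families of relations in Poisson \emph{homology}, where they are cleanest, and transport them to cohomology via the isomorphism \eqref{eq: iso cohom}. Since $\star$ commutes with multiplication by the Casimirs $f_1,f_2$, the map \eqref{eq: iso cohom} is an isomorphism of $\R[[f_1,f_2]]$-modules, so the two pictures carry identical relations. Because $f_1,f_2$ are Casimir, $\delta_\pi(g\,\alpha)=g\,\delta_\pi\alpha$ for $g\in\R[[f_1,f_2]]$, so multiplication by $f_1$ is well defined on $H_\bullet(\R^4,\pi)$ and each relation becomes the statement that a specific degree-three element is a boundary. The basic tool is the identity $\delta_\pi(g\,\d h)=\{g,h\}$, immediate from $\delta_\pi=\iota_\pi\d-\d\iota_\pi$ together with $\iota_\pi(g\,\d h)=0$; hence every finite sum of Poisson brackets lies in $\im(\delta_\pi\colon\Omega^1_f\to\Omega^0_f)$.

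First I would pass to the complex coordinates $z=x_1+ix_2$, $w=x_3+ix_4$, in which $f_1+if_2=z^2+w^2=:f$, and record the bracket table
\[ \{z,\bar z\}=-8i\,w\bar w,\qquad \{w,\bar w\}=-8i\,z\bar z,\qquad \{z,w\}=0,\qquad \{z,\bar w\}=8i\,\bar z w, \]
computed directly from $\{g,h\}\,\mu=\d g\wedge\d h\wedge\d f_1\wedge\d f_2$. For the first family, the key observation is that $f_1+2(x_2^2+x_4^2)=x_1^2+x_2^2+x_3^2+x_4^2=z\bar z+w\bar w$, so the claimed identity $f_1\,x_i=-2(x_2^2+x_4^2)\,x_i$ is equivalent to $(z\bar z+w\bar w)\,x_i\in\im\delta_\pi$. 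I would exhibit explicit primitives for this; for instance $z^2\bar z=-\tfrac{i}{8}\{z\bar w,w\}$ and $zw\bar w=\tfrac{i}{16}\{z^2,\bar z\}$, together with the analogous brackets for $\bar z,w,\bar w$. Taking real and imaginary parts and using the $z\leftrightarrow w$ symmetry of the bracket table then realizes $(z\bar z+w\bar w)\,x_i$ as a boundary for every $i$.

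For the second family I would first identify the four combinations as real and imaginary parts of $\bar z f$ and $\bar w f$, namely $f_1x_1+f_2x_2=\operatorname{Re}(\bar z f)$, $f_2x_1-f_1x_2=\im(\bar z f)$, and likewise with $w$. Writing $\bar z f=z\bar z\cdot z+\bar z w^2$, the first summand is a boundary by the previous step, while $\bar z w^2=-\tfrac{i}{8}\bigl(\{zw,\bar w\}+\{z\bar w,w\}\bigr)$ exhibits the second as a boundary; hence $\bar z f$, and therefore both its real and imaginary parts, are boundaries, with the $w$-case following by symmetry. The relations for the classes $b_i$, which live one homological degree up attached to $\d f_1$, then follow formally: since $f_1$ is Casimir one has $\delta_\pi\,\d f_1=0$ and the Koszul bracket $\{\!\{\,\cdot\,,\d f_1\}\!\}_\pi$ vanishes, so $\cdot\wedge\d f_1$ is a chain map and sends the degree-zero boundaries above to boundaries in $H_1(\R^4,\pi)$, yielding the corresponding identities for $b_i$ verbatim.

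The bracket bookkeeping is routine; the point requiring care — and the main obstacle — is matching the reductions to the normal forms of Theorem \ref{main theorem}. One must check that after subtracting the exhibited boundaries the remaining representative genuinely has coefficients in $\R[[x_2^2,x_4]]$, so that, for example, $-2(x_2^2+x_4^2)\,x_i$ is precisely the Theorem's normal form of $f_1x_i$, and one must verify the chain-map property of $\cdot\wedge\d f_1$ at the level of the Koszul calculus rather than only on exact generators.
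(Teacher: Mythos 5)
Your degree-zero argument is correct, and it is a genuinely different (and leaner) route than the paper's. The bracket table in $z=x_1+ix_2$, $w=x_3+ix_4$ is right, the identities $z^2\bar z=-\tfrac{i}{8}\{z\bar w,w\}$, $zw\bar w=\tfrac{i}{16}\{z^2,\bar z\}$ and $\bar z w^2=-\tfrac{i}{8}\bigl(\{zw,\bar w\}+\{z\bar w,w\}\bigr)$ all check out, and since $\delta_\pi(g\,\d h)=\{g,h\}$ these exhibit $(f_1+2(x_2^2+x_4^2))x_i$, $f_1x_1+f_2x_2$, etc.\ as boundaries in $H_0$. The paper instead produces explicit closed $2$-forms $\nu_1,\nu_2,\nu$ with, e.g., $x_1f_1\mu=-2(x_2^2+x_4^2)x_1\mu+\d f_1\wedge\d f_2\wedge\nu_1$, and then in the other degrees writes explicit $\delta_\pi$-primitives built from these forms and their de Rham primitives ($-\d f_1\wedge\nu_1$, $-\d f_1\wedge\alpha_1$, $-\nu_1$). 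Your chain-map transport replaces those explicit primitives: multiplication by Casimirs and $\cdot\wedge\d f_1$ do commute with $\delta_\pi$, which you can verify directly from $\pi^\sharp\d f_i=0$ via $\iota_\pi(\omega\wedge\d f_1)=(\iota_\pi\omega)\wedge\d f_1$, without any Koszul-bracket formalism. The paper's version has the advantage that the primitives are concrete and get reused; yours is shorter and more mechanical to check.

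There is, however, a gap in coverage. By Theorem \ref{main theorem} the $a_i$-classes do \emph{not} live only in one degree: they occur in homology degrees $0$, $1$ and $2$, as $a_ix_i$, $\d(a_ix_i)$ and $\d(a_ix_i)\wedge\d f_1$, i.e.\ in $H^4$, $H^3$ and $H^2$. Your proposal proves the relations for the degree-zero $a_i$-classes and transports them by $\cdot\wedge\d f_1$ only to the $b_i$-classes $b_ix_i\,\d f_1$; the $a_i$-classes in cohomology degrees $3$ and $2$ are never addressed, and these are exactly what the paper's ``Degree 3'' cocycle $\d(x_1f_1+2(x_2^2+x_4^2)x_1)$ and its entire ``Degree 2'' step handle. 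The repair stays inside your framework but needs one more observation and one caveat. Observation: since $\d\delta_\pi+\delta_\pi\d=0$, the de Rham differential maps $\delta_\pi$-boundaries to $\delta_\pi$-boundaries, so applying $\d$ to your degree-zero identities gives $[\d(f_1x_i)]=[\d(-2(x_2^2+x_4^2)x_i)]$ in $H_1$ and $[\d(f_1x_1+f_2x_2)]=0$, and wedging these with $\d f_1$ gives the degree-$2$ relations. Caveat: $f_1\,\d x_i=\d(f_1x_i)-x_i\,\d f_1$, so the literal $\R[[f_1,f_2]]$-action on $[\d x_i]$ differs from the ``substitution'' relation by the $b_i$-type class $[x_i\,\d f_1]$; the statement ``in terms of the representatives of Theorem \ref{main theorem}'' (and the paper's proof) concerns $\d(f_1x_i)$, so you must either phrase the transported relation that way or invoke the $b_i$-relation to absorb the cross term. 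With these additions your argument is complete.
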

\begin{proof}
    \noindent\underline{Degree 4:} We note that
\begin{align*}
      x_1f_1\mu = -2(x_2^2+x_4^2)x_1\mu +\dif f_1\wedge \dif f_2\wedge \nu_1
\end{align*}
where
\begin{align*}
    \nu_1 := \frac{1}{4}\big(x_3\dif x_2\wedge \dif x_3-x_4\dif x_1\wedge \dif x_3 +x_1\dif x_3\wedge \dif x_4\big)
\end{align*}
which is closed and hence there exists a primitive $\alpha_1\in \Omega^1(\SR)$. Similarly we obtain
\begin{align*}
      x_1f_2\mu = -2(x_2^2+x_4^2)x_2\mu +\dif f_1\wedge \dif f_2\wedge \nu_2
\end{align*}
for the closed two form
\begin{align*}
    \nu_2 := \frac{1}{2}\big(-2x_4\dif x_1\wedge \dif x_4 -x_4\dif x_1\wedge \dif x_3 +x_2\dif x_3\wedge \dif x_4\big).
\end{align*}
Moreover, we have 
\begin{align*}
    (x_1f_1+x_2f_2)\mu = \dif f_1\wedge \dif f_2\wedge \nu
\end{align*}
for the closed two-form
\begin{align*}
    \nu:= \frac{1}{4}\big(x_4\dif x_2\wedge \dif x_4-x_3\dif x_1\wedge \dif x_4 +x_1\dif x_3\wedge \dif x_4\big)
\end{align*}
The other cases follow along the same lines.

\noindent\underline{Degree 2:} We make use of the primitives in degree $4$.  Consider the $2$-cocylce given by
\begin{align*}
    \star W  =\d(x_1f_1+2(x_2^2+x_4^2)x_1)\wedge \d f_2.
\end{align*}
Setting $\star X:= -\d f_1\wedge \nu_1$ we obtain that
\begin{equation*}
    W=\d_{\pi} X.
\end{equation*}
Next consider the $2$-cocylce given by
\begin{align*}
    \star W  =\d(x_1f_1+x_2f_2)\dif f_2.
\end{align*}
If we set $\star X:= -\d f_1\wedge \nu$ then we obtain that
\begin{equation*}
    W=\d_{\pi} X.
\end{equation*}
The other cases follow similarly using the elements from the proof in degree $4$.

\noindent\underline{Degree 3:} Consider the $3$-cocylce given by
\begin{align*}
    \star T  =(x_1f_1+2(x_2^2+x_4^2)x_1)\dif f_1.
\end{align*}
Let $\alpha_1$ be a primitiv of $\nu_1$. Then we obtain for $\star W:= -\d f_1\wedge \alpha_1 $ that
\begin{equation*}
    T=\d_{\pi} W.
\end{equation*}
Next consider the $3$-cocylce given by
\begin{align*}
    \star T  =(x_1f_1+x_2f_2)\dif f_1.
\end{align*}
Let $\alpha$ be a primitiv of a closed $2$-form $\nu$ and define $\star W:= -\d f_1\wedge \alpha$. Then we obtain that
\begin{equation*}
    T=\d_{\pi} W.
\end{equation*}
Similarly we can treat the cocycle
\begin{align*}
    \star T  =\d (x_1f_1+2(x_2^2+x_4^2)x_1).
\end{align*}
Defining $\star W:= -\nu_1$ we obtain
\begin{equation*}
    T=\d_{\pi} W.
\end{equation*}
The other cases follow similarly using the primitives from the proof in degree $4$.
\end{proof}

\section{Preliminaries}\label{sec:preliminaries}
In this section we recall some facts from Poisson geometry and (homological) algebra. The first section is dedicated to Poisson geometry. In particular, we recall the definition of Jacobi-Poisson structures and describe the Poisson structure we are studying. In the second part we recall several notions from algebra: Hilbert-Poincare series, regular sequences and standard bases.

\subsection{Jacobi Poisson structures}\label{sec:prel poisson}
We recall the notion of Jacobi-Poisson structures as introduce in \cite{Dam89} attributed to Flaschka and Ratiu. See also \cite{GMP93}. In particular, we give an explicit formula for $\pi$ from Definition \ref{def: poisson biv} and we compute the Poisson differential $\dif_{\pi}$ in the various degrees.

Given a vector space $\R^n$ with standard volume form $\mu$ and $n-2$ functions $f_1, \dots f_{n-2}\in C^{\infty}(\R^n)$ one can define a Poisson structure $\pi$ on $\R^n$ by the following relation for the associated Poisson bracket:
\begin{equation*}
    \{g,h\}\mu := \dif g\wedge\dif h\wedge \dif f_1\wedge\dots \wedge \dif f_{n-2} \qquad \text{ for } g,h\in C^{\infty}(\R^n).
\end{equation*}
Poisson structures of this form are called \emph{Jacobi-Poisson} structures. Note that the leaves of such Poisson structures have dimension at most $2$. For a generalization with leaves of higher dimension see \cite{DP12}. For some properties of Jacobi-Poisson structures and a generalization to manifolds we refer to \cite{Prz01} and \cite{GSV14}. One property is the following:
\begin{lemma}\label{lemma: JP uni}
Any Jacobi-Poisson structure $\pi\in \mathfrak{X}^2(\R^n)$ is unimodular.
\end{lemma}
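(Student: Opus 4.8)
The plan is to exhibit the standard volume form $\mu$ as one whose associated modular vector field vanishes. Since the modular class $\mathrm{mod}(\R^n,\pi)=[X_\mu]\in H^1(\R^n,\pi)$ is independent of the chosen volume form, producing a single $\mu$ with $X_\mu=0$ proves unimodularity. Concretely, I would show that
\begin{equation*}
    \star(\pi)=\iota_\pi\mu = \dif f_1\wedge\dots\wedge\dif f_{n-2},
\end{equation*}
which is closed, being a wedge of exact forms. Then $\star X_\mu=\dif\star(\pi)=0$, and since $\star=\mu^\flat$ is an isomorphism we get $X_\mu=0$. This is exactly the computation recorded for $(\R^4,\pi)$ in the introduction, now carried out in arbitrary dimension.

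The heart of the matter is the identification of $\star(\pi)$. First I would establish the pointwise identity
\begin{equation*}
    \gamma\wedge\iota_\pi\mu = \langle\gamma,\pi\rangle\,\mu \qquad\text{for every } 2\text{-form }\gamma,
\end{equation*}
where $\langle\cdot,\cdot\rangle$ denotes the natural pairing between $2$-forms and bivectors. This follows from the degree-one identity $\alpha\wedge\iota_X\mu=\langle\alpha,X\rangle\mu$, itself immediate from expanding $\iota_X(\alpha\wedge\mu)=0$ (as $\alpha\wedge\mu$ has degree $n+1$), by contracting a second time and tracking the Leibniz signs with the convention $\iota_{X\wedge Y}=\iota_Y\iota_X$. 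Applying the identity to $\gamma=\dif g\wedge\dif h$ yields $\dif g\wedge\dif h\wedge\iota_\pi\mu=\pi(\dif g,\dif h)\,\mu$, and the defining relation of a Jacobi–Poisson structure rewrites the right-hand side as $\dif g\wedge\dif h\wedge\dif f_1\wedge\dots\wedge\dif f_{n-2}$. Hence
\begin{equation*}
    \dif g\wedge\dif h\wedge\big(\iota_\pi\mu-\dif f_1\wedge\dots\wedge\dif f_{n-2}\big)=0
\end{equation*}
for all $g,h\in C^\infty(\R^n)$.

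To conclude that the bracketed $(n-2)$-form vanishes identically, I would invoke the nondegeneracy of the wedge pairing $\wedge^2(\R^n)^*\times\wedge^{n-2}(\R^n)^*\to\wedge^n(\R^n)^*\cong\R$. At each point the differentials $\dif g$ realize all covectors, so the displayed vanishing says that $\iota_\pi\mu-\dif f_1\wedge\dots\wedge\dif f_{n-2}$ wedges to zero against every decomposable $2$-covector $\alpha\wedge\beta$; since these span $\wedge^2(\R^n)^*$, nondegeneracy forces the $(n-2)$-form to be zero. This gives $\star(\pi)=\dif f_1\wedge\dots\wedge\dif f_{n-2}$ and completes the argument.

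I do not expect a genuine obstacle here: the proof is multilinear algebra combined with the defining formula. The only care required is in fixing the ordering and sign conventions for $\iota_{X\wedge Y}$ and for $\langle\cdot,\cdot\rangle$, so that the scalar relating $\star(\pi)$ to $\dif f_1\wedge\dots\wedge\dif f_{n-2}$ comes out to be exactly $1$ as in the $\R^4$ case; in any event, even a nonzero scalar would suffice, since closedness of $\star(\pi)$ is all that the conclusion $X_\mu=0$ needs.
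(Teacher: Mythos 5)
Your proof is correct and takes essentially the same route as the paper: identify $\star(\pi)=\dif f_1\wedge\dots\wedge\dif f_{n-2}$, note that this form is closed, and conclude from the definition $\star X_\mu = \dif\star(\pi)$ that $X_\mu=0$, hence unimodularity. The paper simply records the identity $\star(\pi)=\dif f_1\wedge\dots\wedge\dif f_{n-2}$ as an observation; your argument supplies the multilinear-algebra details (the pairing identity $\gamma\wedge\iota_\pi\mu=\langle\gamma,\pi\rangle\mu$ and nondegeneracy of the wedge pairing) that the paper leaves implicit.
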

\begin{proof}
We observe that using \eqref{eq: isomorphism form vf} we have the identity
\begin{align*}
   \star (\pi) = \d f_1 \wedge \dots \wedge \d f_{n-2}.
\end{align*}
Hence using the definition of the modular vector field the result follows.
\end{proof}
As a consequence of the Lemma together with \eqref{eq:partialpi_identity} we obtain an isomorphism of complexes:
\begin{align*}
    \star: (\mathfrak{X}^{\bullet}(\R^n),\dif_{\pi})\diffto(\Omega^{n-\bullet}(\R^n),\delta_\pi).
\end{align*}

\subsubsection*{Poisson structure associated to a Lefschetz fibration}
Consider $f:\R^4\to \R^2$ as in \eqref{eq: lf} together with
the standard volume forms $\mu_4 $ and $\mu_2$ on $\R^4$ and $\R^2$ respectively, given by
\begin{align*}
    \mu_4=\dif x_1\wedge \dif x_2\wedge \dif x_3\wedge\dif x_4 \qquad \text{ and }\qquad \mu_2=\dif y_1\wedge \dif y_2.
\end{align*} 
We often write $\mu$ instead of $\mu_4$. Then $\pi$ from Definition \ref{def: poisson biv} is a Jacobi-Poisson structure and satisfies
\[ \star(\pi)=\dif f_1\wedge \dif f_2.\]
Explicitly, the Poisson bivector $\pi$ is given by
\begin{align*}
    \frac{1}{4}\pi =&\ \ \ \ (x_1^2+x_2^2)\partial _3\wedge \partial_4 - (x_1x_4-x_2x_3)(\partial_1\wedge \partial _3+\partial_2\wedge \partial_4)\\
    &+(x_3^2+x_4^2)\partial_1\wedge \partial_2 + (x_1x_3+x_2x_4)(\partial_2\wedge \partial_3-\partial_1\wedge \partial_4)
\end{align*} 
As an immediate consequence we can describe the Poisson differential $\delta_{\pi}$ on differential forms.
\begin{proposition}\label{proposition: Poisson differential}
The Poisson differential $\delta_{\pi}$ is given as follows.
\begin{itemize}
    \item In degree 1 we get for $\alpha\in \Omega^1(\R^4)$ that
    \begin{align*}
        \delta_{\pi}(\alpha) =& \ \iota_{\pi}(\dif \alpha)
    \end{align*}
    \item For a $2$-form $\beta\in \Omega^2(\R^4)$ we obtain the formula
    \begin{align*}
        \delta_{\pi}(\beta) =&\iota_{\star^{-1}\dif\beta}(\dif f_1\wedge \dif f_2) -\dif \iota_{\pi}(\beta)  
    \end{align*}
    \item In degree $3$, the differential of $\gamma\in \Omega^3(\R^4)$ is given by
    \begin{align*}
        \delta_{\pi}\gamma =\iota_{\pi} (\d \gamma) - \d \iota_{\star^{-1}\gamma}(\d f_1 \wedge\d f_2)
    \end{align*}
    \item For the top degree we obtain for any $g\in C^{\infty}(\R^4)$ that
    \begin{align*}
        \delta_{\pi}(g\mu) =  \dif g\wedge \dif f_1\wedge \dif f_2
    \end{align*}
\end{itemize}
\end{proposition}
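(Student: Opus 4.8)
The plan is to compute $\delta_\pi = \iota_\pi\circ\dif - \dif\circ\iota_\pi$ degree by degree, feeding in only the unimodularity identity $\star(\pi)=\iota_\pi\mu=\dif f_1\wedge\dif f_2$, a count of form degrees, and a single algebraic contraction identity. The key point to keep in mind is that $\star=\mu^\flat$ maps $\mathfrak{X}^k$ to $\Omega^{4-k}$, so $\iota_\pi$ lowers the form degree by $2$ and $\star^{-1}$ of a $3$-form is an ordinary vector field; this is what makes the right-hand sides in degrees $2$ and $3$ well defined.

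The extreme degrees are immediate. In degree $1$, for $\alpha\in\Omega^1$ the term $\iota_\pi\alpha$ would live in $\Omega^{-1}=0$, so $\delta_\pi\alpha=\iota_\pi(\dif\alpha)$ with nothing further to check. In degree $4$, for $g\mu$ the term $\iota_\pi(\dif(g\mu))$ vanishes because $\dif(g\mu)=0$ is a top form, while $\iota_\pi(g\mu)=g\,\iota_\pi\mu=g\,\dif f_1\wedge\dif f_2$ by unimodularity; hence $\delta_\pi(g\mu)=-\dif(g\,\dif f_1\wedge\dif f_2)=\pm\,\dif g\wedge\dif f_1\wedge\dif f_2$, the sign being fixed by the conventions for $\iota_\pi$ and the Schouten bracket.

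The real content sits in degrees $2$ and $3$, and both reduce to the same contraction identity: for any vector field $X\in\mathfrak{X}^1$,
\begin{equation*}
    \iota_\pi\,\iota_X\,\mu=\iota_X\,\iota_\pi\,\mu .
\end{equation*}
This holds because contraction of forms by multivectors turns wedge products into (signed) compositions, so the two sides are contraction by $\pi\wedge X$ and by $X\wedge\pi$; these multivectors coincide because $\pi$ has even degree. Specializing $X=\star^{-1}\omega$ for a $3$-form $\omega$ (so that $\omega=\iota_X\mu$) and using $\iota_\pi\mu=\dif f_1\wedge\dif f_2$ turns this into
\begin{equation*}
    \iota_\pi\,\omega=\iota_{\star^{-1}\omega}(\dif f_1\wedge\dif f_2).
\end{equation*}
For $\beta\in\Omega^2$ I apply this with $\omega=\dif\beta$ to rewrite the term $\iota_\pi(\dif\beta)$, while $\iota_\pi\beta\in\Omega^0$ stays as it is, producing the degree $2$ formula. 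For $\gamma\in\Omega^3$ I apply it with $\omega=\gamma$ to rewrite $\iota_\pi\gamma=\iota_{\star^{-1}\gamma}(\dif f_1\wedge\dif f_2)$ inside the term $\dif\iota_\pi\gamma$, producing the degree $3$ formula.

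The only genuine obstacle is the bookkeeping of signs: pinning down the convention under which contraction by an even multivector satisfies the wedge identity used above, and checking that the resulting global sign of $\delta_\pi$ is the one in the statement (the naive degree $4$ computation produces the opposite sign, so the conventions must be fixed consistently once and for all). With that settled, every case is just the degree count together with the single identity above.
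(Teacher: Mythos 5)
Your proof is correct and takes essentially the same route as the paper: the paper's entire proof is the definition of $\delta_\pi$ plus the identity $\iota_{V}(\alpha)= (-1)^{k(n-l)}\iota_{\star^{-1}\alpha}(\star V)$, and your commutation identity $\iota_\pi\iota_X\mu = \iota_X\iota_\pi\mu$ (valid since $\pi\wedge X = X\wedge\pi$) is precisely the case $(k,l)=(2,3)$ of that identity, where the sign factor is $+1$, which is all that degrees $2$ and $3$ require. The degree-$4$ sign discrepancy you flag is real but is internal to the paper's own conventions --- taking $\delta_\pi := \iota_\pi\circ\dif - \dif\circ\iota_\pi$ and $\star\pi = \dif f_1\wedge\dif f_2$ literally yields $-\dif g\wedge\dif f_1\wedge\dif f_2$ in top degree --- and it is harmless, since flipping the sign of $\delta_\pi$ on $\Omega^4$ changes neither kernels nor images and hence none of the (co)homology computations that follow.
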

The proof follows from the definition of $\delta_\pi$ and the identity
\[ \iota_{V}(\alpha)= (-1)^{k(n-l)}\iota_{\star^{-1}\alpha}(\star V),\]
for $V\in \mathfrak{X}^k(\R^n)$ and $\alpha \in \Omega^l(\R^m)$ with $k\le l\le m$.

\begin{remark}
One can generalize the formulas for $\delta_{\pi}$ in Proposition \ref{proposition: Poisson differential} to describe the differentials associated to any Jacobi-Poisson structure.
\end{remark}

\subsection{(Homological) Algebra}
\label{sec:prel algebra}This section intends to provide the algebraic background needed for the paper. We first recall the definition of the Hilbert-Poincare series together with an example which we will use later on. After a brief recap of regular sequences in the second section, we recall the notion of standard bases in the third part. As an application, we provide a couple of examples which will be used in later computations.

\subsubsection{Hilbert-Poincare series}\label{sec:prel hilbertseries} A reference for the background material in this sequence is \cite{AM69}.

Let $\SR$ be a commutative ring and $C$ a class of $\SR$-modules. A function $\lambda:C\to \Z$ is called \emph{additive} if for every short exact sequence of $\SR$-modules in $C$:
\begin{equation*}
    0\to M'\to M \to M'' \to 0 \qquad \Rightarrow \qquad \lambda (M)= \lambda(M')+\lambda(M'').
\end{equation*}
\begin{example}\label{ex: additive function}
For $\SR=\R$ and $C$ the class of all finite dimensional $\R$-vector spaces, $\lambda =\dim $ is a additive function.
\end{example}
\begin{proposition}\label{proposition: exact sequence additive}
Assume we have an exact sequence of $\SR$-modules in $C$
\begin{equation*}
    0\to M_0 \to M_1\to \dots \to M_n\to 0
\end{equation*}
such that all kernels of the homomorphisms are in $C$, then for any additive function $\lambda$ on $C$:
\begin{equation*}
    \sum_{i=0}^n (-1)^i\lambda(M_i)=0.
\end{equation*}
\end{proposition}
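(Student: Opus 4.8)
The plan is to split the long exact sequence into a family of short exact sequences and then combine additivity of $\lambda$ with a telescoping cancellation. Denote the maps by $d_i\colon M_i\to M_{i+1}$, with the convention that the flanking zeros make $d_{-1}$ the zero map into $M_0$ and $d_n$ the zero map out of $M_n$. First I would set $K_i := \ker d_i$ for $0\le i\le n$, together with $K_{n+1}:=0$. Exactness at $M_{i+1}$ says precisely that $\im d_i = \ker d_{i+1} = K_{i+1}$, and exactness at $M_0$ gives $K_0 = 0$. Factoring $d_i$ through its image then yields, via the first isomorphism theorem, a short exact sequence
\begin{equation*}
    0\to K_i \to M_i \xrightarrow{d_i} K_{i+1}\to 0
\end{equation*}
for every $0\le i\le n$.

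The hypothesis that all kernels of the homomorphisms lie in $C$ is exactly what guarantees $K_i\in C$, so that $\lambda(K_i)$ is defined and $\lambda$ may legitimately be applied to each of these short exact sequences. Doing so produces the relations
\begin{equation*}
    \lambda(M_i) = \lambda(K_i) + \lambda(K_{i+1}), \qquad 0\le i \le n.
\end{equation*}
It then remains to insert these into the alternating sum. I would write
\begin{equation*}
    \sum_{i=0}^n (-1)^i\lambda(M_i) = \sum_{i=0}^n (-1)^i\lambda(K_i) + \sum_{i=0}^n (-1)^i\lambda(K_{i+1}),
\end{equation*}
and after reindexing the second sum as $\sum_{j=1}^{n+1}(-1)^{j-1}\lambda(K_j)$ and using $\lambda(K_0)=\lambda(K_{n+1})=\lambda(0)=0$, every surviving term cancels against its partner of opposite sign, leaving the value $0$.

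The argument is entirely formal once the short exact sequences are constructed, so there is no genuine obstacle of a computational nature. The one point that truly requires attention — and the reason the assumption on kernels is stated separately from membership of the $M_i$ in $C$ — is that additivity of $\lambda$ has only been assumed for short exact sequences all of whose terms lie in $C$. One must therefore check that each $K_i\in C$ \emph{before} invoking additivity, which is exactly the content of that hypothesis; granting it, the telescoping goes through verbatim.
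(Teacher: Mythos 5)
Your argument is correct and complete: splitting the long exact sequence into the short exact sequences $0\to K_i\to M_i\to K_{i+1}\to 0$ and telescoping the alternating sum is precisely the standard proof of this fact, which the paper does not reproduce but instead defers to its cited reference \cite{AM69}. The only step you assert without comment, $\lambda(0)=0$, is harmless: $K_0=0$ lies in $C$ by the kernel hypothesis (it is the kernel of $M_0\to M_1$), and applying additivity to $0\to 0\to M_0\to M_0\to 0$ gives $\lambda(M_0)=\lambda(0)+\lambda(M_0)$, hence $\lambda(0)=0$.
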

Let $\SR=\oplus_{n=0}^\infty\SR_n$ be a Noetherian graded ring. Note that $\SR_0$ is Noetherian and let $y_1,\dots ,y_s$ be the generators of $\SR$ as an $\SR_0$-algebra, of degrees $k_1,\dots ,k_r$, respectively. Let $\SM=\oplus_{n=0}^\infty\SM_n$ be a finitely-generated, graded $\SR$-module. Note in particular that this implies that all $\SM_i$ are finitely generated $\SR_0$-modules. Moreover, let $\lambda$ be an additive function on the class of all finitely-generated $\SR_0$-modules.
The \emph{Hilbert-Poincare series} of $M$ with respect to $\lambda$ is the power series
\begin{equation*}
    HP(\SM,t):= \sum_{i=0}^{\infty} \lambda(\SM_i) t^i.
\end{equation*}
In general the Hilbert-Poincare series can be described as follows:
\begin{theorem}[{\cite[Thm 11.1]{AM69}}]
$HP(\SM,t)$ is a rational function in $t$ of the form 
\begin{equation*}
    \frac{p(t)}{\Pi_{j=1}^r(1-t^{k_j})} \qquad \text{ for }\ p(t)\in \Z[t].
\end{equation*}
\end{theorem}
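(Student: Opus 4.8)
The plan is to prove the standard structure theorem for Hilbert-Poincaré series by induction on the number $r$ of algebra generators $y_1,\dots,y_r$ of $\SR$ over $\SR_0$. The base case $r=0$ means $\SR=\SR_0$, so $\SM$ is a finitely-generated $\SR_0$-module; since $\SM$ is graded and finitely generated over the degree-zero part, it is concentrated in finitely many degrees, whence $HP(\SM,t)$ is a polynomial in $\Z[t]$, which is of the required form with empty product in the denominator. This reduces everything to producing the extra factor $(1-t^{k_r})$ when we pass from $r-1$ to $r$ generators.

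For the inductive step I would exploit multiplication by the last generator $y_r$, which has degree $k_r$. This gives a graded $\SR$-module homomorphism $y_r\colon \SM\to \SM$ raising degree by $k_r$, and fitting into an exact sequence
\begin{equation*}
    0\to K \to \SM \xrightarrow{\,y_r\,} \SM(k_r) \to L \to 0,
\end{equation*}
where $K=\ker(y_r)$ and $L=\operatorname{coker}(y_r)=\SM/y_r\SM$, and $\SM(k_r)$ denotes the degree shift. The crucial observation is that $y_r$ annihilates both $K$ and $L$, so these are finitely-generated graded modules over the subring $\SR':=\SR_0[y_1,\dots,y_{r-1}]$, which has only $r-1$ algebra generators over $\SR_0$. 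By the inductive hypothesis, $HP(K,t)$ and $HP(L,t)$ are rational of the desired form with denominator $\prod_{j=1}^{r-1}(1-t^{k_j})$.

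Now I would apply additivity of $\lambda$ degree by degree. In each homogeneous degree $n$ the four-term sequence restricts to an exact sequence of finitely-generated $\SR_0$-modules, so Proposition \ref{proposition: exact sequence additive} gives $\lambda(K_n)-\lambda(\SM_n)+\lambda(\SM_{n-k_r})-\lambda(L_n)=0$. Multiplying by $t^n$ and summing over $n$ converts this into the identity of power series
\begin{equation*}
    HP(K,t) - HP(\SM,t) + t^{k_r}HP(\SM,t) - HP(L,t)=0,
\end{equation*}
where the shift in the third term produces the factor $t^{k_r}$. Solving for $HP(\SM,t)$ yields
\begin{equation*}
    (1-t^{k_r})\,HP(\SM,t) = HP(L,t)-HP(K,t),
\end{equation*}
and dividing by $(1-t^{k_r})$ gives a rational function whose denominator is $\prod_{j=1}^{r}(1-t^{k_j})$, completing the induction.

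The main obstacle is bookkeeping the finiteness and gradedness hypotheses carefully so that additivity applies: one must check that $K$ and $L$ are genuinely finitely generated over $\SR'$ (using that $\SR$ is Noetherian and $\SM$ finitely generated, so submodules and quotients remain finitely generated) and that each graded piece is a finitely-generated $\SR_0$-module lying in the class on which $\lambda$ is defined. Assembling the per-degree additivity relations into a single power-series identity is the only mildly delicate analytic point, but it is purely formal since everything is a non-negatively graded object and each degree involves only finitely many terms.
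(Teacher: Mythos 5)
Your proposal is correct and follows exactly the standard Hilbert--Serre induction from the cited reference (Atiyah--Macdonald, Thm 11.1): the paper itself gives no proof of this statement beyond the citation, and the argument you give — induction on the number of algebra generators, the four-term exact sequence coming from multiplication by $y_r$, the observation that kernel and cokernel are finitely generated over the subring with $r-1$ generators, and per-degree additivity summed into a power-series identity — is precisely that textbook proof. One cosmetic slip worth fixing: with your arrow $\SM \xrightarrow{y_r} \SM(k_r)$ the degree-$n$ piece of $\SM(k_r)$ is $\SM_{n+k_r}$, not $\SM_{n-k_r}$, so the correctly bookkept identity reads $(1-t^{k_r})HP(\SM,t) = HP(L,t) - t^{k_r}HP(K,t)$ up to a polynomial correction in $\Z[t]$ coming from low degrees — a discrepancy of shifts and signs that affects neither the rationality conclusion nor the form of the denominator.
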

For any graded module $\SM$ we denote by $\SM(-a)$ the module with the degree shifted by $a\in\N$, i.e.
\begin{equation*}
    \SM(-a)_d = \begin{cases}\SM_{d-a} & \text{ if }\ 0\le d-a\\
    0&\text{ else.}
    \end{cases}
\end{equation*}
Hence by definition of the Hilbert-Poincare series we obtain for a degree shift the relation
\begin{equation*}
    HP(\SM(-a),t)=t^{-a}HP(\SM,t)
\end{equation*}
Another consequence for the Hilbert-Poincare series using Proposition \ref{proposition: exact sequence additive} is the following.
\begin{corollary}
Let $\SR^1, \dots ,\SR^n$ be Noetherian graded rings with $\SR^1_0 = \dots =\SR^n_0 = \SR_0$ which are all finitely generated as an $\SR_0$-algebra and let $\SM^1,\dots ,\SM^n$ be finitely-generated, graded $\SR^1,\dots , \SR^n$-modules respectively, fitting into an exact sequence
\begin{equation*}
    0\to \SM^1 \to \SM^2\to \dots \to \SM^n\to 0
\end{equation*}
such that all the homomorphisms preserve the grading and all kernels of the homomorphisms in the various degrees are in $C$. For $\lambda=\mathrm{dim}$ as in Example \ref{ex: additive function} we obtain using Proposition \ref{proposition: exact sequence additive} that
\begin{equation*}
    \sum_{i=0}^n (-1)^iHP(\SM^i,t)=0.
\end{equation*}
\end{corollary}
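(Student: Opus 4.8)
The plan is to reduce the identity of Hilbert-Poincare series to a degree-by-degree application of Proposition~\ref{proposition: exact sequence additive}. The key observation is that because every map in the sequence
\[ 0\to \SM^1 \to \SM^2\to \dots \to \SM^n\to 0 \]
preserves the grading, it restricts for each fixed $d\in \N$ to a homomorphism of $\SR_0$-modules on the homogeneous components of degree $d$. For graded modules with grading-preserving maps, exactness can be verified in each homogeneous degree separately: an element of $\SM^i_d$ lies in the kernel (resp.\ image) of the corresponding map exactly when it does so inside $\SM^i$. Hence the restricted sequence
\[ 0\to \SM^1_d \to \SM^2_d\to \dots \to \SM^n_d\to 0 \]
is exact for every $d$, now as a sequence of $\SR_0=\R$-vector spaces.

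First I would check that each term and each kernel of the degree-$d$ sequence lies in the class $C$ of finite-dimensional $\R$-vector spaces. Finiteness of $\dim \SM^i_d$ follows from $\SM^i$ being a finitely generated graded module over the Noetherian graded ring $\SR^i$, while finiteness of the kernels is part of the hypothesis. With this in place, Proposition~\ref{proposition: exact sequence additive} applied with the additive function $\lambda=\dim$ from Example~\ref{ex: additive function} yields, for every $d\ge 0$, the numerical identity
\[ \sum_{i=0}^n (-1)^i \dim \SM^i_d =0. \]

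To conclude I would repackage these identities as power series. Multiplying the degree-$d$ identity by $t^d$ and summing over all $d\ge 0$ gives, after interchanging the two formal summations,
\[ \sum_{i=0}^n (-1)^i \sum_{d=0}^\infty \dim(\SM^i_d)\, t^d = \sum_{i=0}^n (-1)^i HP(\SM^i,t)=0, \]
which is the assertion. The interchange is harmless, since it is an equality of formal power series checked coefficient by coefficient, and each coefficient identity is exactly the one produced in the previous step. The only point needing care -- rather than a real obstacle -- is the passage to degree-wise exactness together with the verification that all the relevant $\SR_0$-modules and kernels are finite-dimensional, so that $\dim$ is defined and Proposition~\ref{proposition: exact sequence additive} may be invoked; after that the result is purely formal bookkeeping.
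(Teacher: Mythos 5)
Your proposal is correct and follows essentially the same route as the paper: the corollary is stated there as a direct consequence of Proposition \ref{proposition: exact sequence additive}, applied degree by degree to the exact sequences of finite-dimensional $\R$-vector spaces obtained by restricting the grading-preserving maps to each homogeneous component, and then summing the resulting identities into the Hilbert-Poincare series. Your additional care about degree-wise exactness and finite-dimensionality is exactly the bookkeeping the paper leaves implicit.
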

An example which we will also use later on is that of formal forms on $\R^n$:
\begin{example}\label{ex: HP formal forms}
Denote by $\SR:= \R[[x_1,\dots ,x_n]]$ the ring of formal power series in $n$-variables and by $\Omega^{\bullet}(\SR)$ the DGA of differental forms with coefficient in $\SR$, i.e.
\begin{align*}
    \Omega^{\bullet}(\SR)= \wedge^{\bullet}(\R^n)^*\otimes \SR
\end{align*}
The degree of $x_i$ is $1$ and the degree of $\dif x_i$ is $-1$ for all $1\le i\le n$. Then we obtain
\begin{align*}
    HP_{\Omega^m(\SR)}(t)=\binom{n}{m}\frac{t^m}{(1-t)^n}
\end{align*}
\end{example}

\subsubsection{Regular sequences}\label{sec:prel regularsequences}

Here we follow \cite{Pel09}. Let $\SR$ be a commutative ring and $\SM$ be a finitely generated $\SR$-module. We recall that we call an element $r\in \SR$ a \emph{non-zero divisor} on $\SM$ if
\begin{equation*}
    \forall m\in \SM: \ rm=0 \quad \Rightarrow m=0,
\end{equation*}
and a \emph{$\SM$-regular sequence} is a sequence $r_1,\dots , r_q\in \SR$ such that $r_i$ is a non-zero-divisor in the quotient
\begin{equation*}
    \SM / \langle r_1,\dots ,r_{i-1}\rangle \cdot \SM,
\end{equation*}
where $\langle r_1,\dots ,r_{i-1}\rangle \subset \SR$ denotes the ideal generated by $r_j$ for $1\le j\le i-1$. If $\SM=\SR$ then we simply call it a regular sequence.

\begin{example}\label{ex: reg sequence polynomial}
    The polynomials $x_1x_3+x_2x_4$ and $x_1x_4-x_2x_3$ form a regular sequence in $\R[[x_1,x_2,x_3,x_4]]$.
\end{example}
\begin{lemma}[see \cite{Wei94}]
Let $r_1,\dots , r_n$ be a regular sequence in $\SR$, then the Koszul complex
\begin{equation*}
    0\to \wedge^0 (\SR^n) \xrightarrow{\wedge \alpha}\dots \xrightarrow{\wedge \alpha} \wedge^n(\SR^n)\rightarrow \SR /\langle r_1 ,\dots , r_n\rangle \SR \to 0
\end{equation*}
is exact, where $\alpha=\sum r_ie_i$ and $(e_1,\dots ,e_n)$ is a free basis of $\SR^n$.
\end{lemma}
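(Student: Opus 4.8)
The plan is to prove the exactness of the Koszul complex associated to a regular sequence $r_1, \dots, r_n$ by induction on $n$. The base case $n=1$ simply reduces to the statement that $r_1$ is a non-zero divisor (so $\wedge\alpha = \cdot r_1$ is injective on $\SR$) and that the cokernel is by definition $\SR/\langle r_1\rangle\SR$; both are immediate. For the inductive step, I would exploit the standard tensor-product decomposition of the Koszul complex. Writing $K_\bullet(r_1,\dots,r_n)$ for the Koszul complex on the full sequence and $K_\bullet(r_1,\dots,r_{n-1})$ for the complex on the first $n-1$ elements, there is an isomorphism of complexes
\begin{equation*}
    K_\bullet(r_1,\dots,r_n) \cong K_\bullet(r_1,\dots,r_{n-1}) \otimes_{\SR} K_\bullet(r_n),
\end{equation*}
where $K_\bullet(r_n)$ is the two-term complex $\SR \xrightarrow{\cdot r_n} \SR$. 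This is exactly the statement that contracting with $\alpha = \sum_{i=1}^n r_i e_i$ splits according to whether the basis vector $e_n$ is present, and it is the algebraic heart of the argument.

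Given this decomposition, I would feed it into the algebraic mapping-cone (or short exact sequence of complexes) machinery. Concretely, tensoring with $K_\bullet(r_n)$ produces a short exact sequence of complexes
\begin{equation*}
    0 \to K_\bullet(r_1,\dots,r_{n-1}) \to K_\bullet(r_1,\dots,r_n) \to K_\bullet(r_1,\dots,r_{n-1})(-1) \to 0,
\end{equation*}
whose associated long exact sequence in homology has connecting maps given by multiplication by $\pm r_n$. By the inductive hypothesis the only nonvanishing homology of $K_\bullet(r_1,\dots,r_{n-1})$ sits in top degree and equals $\SR/\langle r_1,\dots,r_{n-1}\rangle\SR$. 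Plugging this into the long exact sequence, all intermediate homology groups of $K_\bullet(r_1,\dots,r_n)$ vanish automatically, and the surviving piece of the long exact sequence reduces to the map
\begin{equation*}
    \SR/\langle r_1,\dots,r_{n-1}\rangle\SR \xrightarrow{\cdot r_n} \SR/\langle r_1,\dots,r_{n-1}\rangle\SR.
\end{equation*}
Here the regularity hypothesis enters decisively: by definition of an $\SR$-regular sequence, $r_n$ is a non-zero divisor on $\SR/\langle r_1,\dots,r_{n-1}\rangle\SR$, so this map is injective, killing the next homology group, and its cokernel is $\SR/\langle r_1,\dots,r_n\rangle\SR$, which is precisely the identification we want in top degree.

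The main obstacle, and the step requiring the most care, is verifying the tensor-product decomposition together with the sign bookkeeping in the differentials, since the Koszul differential $\wedge\alpha$ must be matched up correctly with the differential on the tensor product of complexes (the usual $d \otimes 1 \pm 1 \otimes d$ convention). Everything else is formal homological algebra once regularity is invoked at the right place; in particular the ordering of the sequence matters precisely because regularity is a statement about successive quotients, and this is exactly what makes the inductive hypothesis applicable at each stage. I would note that the Koszul complex here is being used in its homological (decreasing) indexing, so I would state the convention explicitly to align the arrows in the displayed complex with the direction $\wedge\alpha$ increases exterior degree. A clean alternative, should one prefer to avoid the cone formalism, is to argue the vanishing of the homology directly degree by degree using the contracting homotopy built from a chosen preimage witnessing regularity, but the tensor-product route is shorter and more transparent.
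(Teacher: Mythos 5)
Your proof is correct and is essentially the standard argument: the paper gives no proof of this lemma, deferring to Weibel, and the induction via the decomposition $K_\bullet(r_1,\dots,r_n)\cong K_\bullet(r_1,\dots,r_{n-1})\otimes_{\SR}K_\bullet(r_n)$ together with the mapping-cone long exact sequence (with connecting map $\pm r_n$, killed in the relevant degree by regularity of $r_n$ on $\SR/\langle r_1,\dots,r_{n-1}\rangle$) is exactly the proof in that reference. Your closing remark about indexing is the right one to keep, since the paper's complex is the increasing wedge-with-$\alpha$ version, identified with the usual decreasing Koszul complex via the self-duality $\wedge^p(\SR^n)\cong \wedge^{n-p}(\SR^n)$.
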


\begin{corollary}\label{lemma: exactness wedge f}
The operations $\wedge \d f_1$ and $\wedge \d f_2$ are exact in the sense of the above lemma.
\end{corollary}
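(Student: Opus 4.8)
The plan is to reduce the statement directly to the Koszul-complex lemma by recognizing the coefficient tuples of $\d f_1$ and $\d f_2$ as regular sequences. First I would fix the identification $\Omega^\bullet(\SR)\cong \wedge^\bullet(\SR^4)$ sending the free $\SR$-basis $\d x_1,\dots,\d x_4$ of $\Omega^1(\SR)$ to a free basis $e_1,\dots,e_4$ of $\SR^4$. Under this identification the operator $\wedge\, \d f_i$ on $\Omega^\bullet(\SR)$ is exactly the Koszul differential $\wedge \alpha_i$ with $\alpha_i=\sum_{j=1}^4 (\partial_{x_j} f_i)\,e_j$. Thus the corollary is precisely the assertion that the coefficient tuples $\big(\partial_{x_j} f_1\big)_{j}$ and $\big(\partial_{x_j} f_2\big)_{j}$ are regular sequences in $\SR=\R[[x_1,\dots,x_4]]$, after which the ``above lemma'' yields exactness.

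Next I would record the coefficients explicitly, namely
\[
\d f_1 = 2\big(x_1\,\d x_1 - x_2\,\d x_2 + x_3\,\d x_3 - x_4\,\d x_4\big),\qquad
\d f_2 = 2\big(x_2\,\d x_1 + x_1\,\d x_2 + x_4\,\d x_3 + x_3\,\d x_4\big).
\]
In each case the coefficient tuple equals, up to multiplication by the nonzero scalar $2$ and a reordering of the entries, the sequence $x_1,x_2,x_3,x_4$. Since multiplying each entry by a unit does not change regularity, and since $x_1,x_2,x_3,x_4$ is patently regular --- the successive quotients $\SR/\langle x_1,\dots,x_{j-1}\rangle\cong \R[[x_j,\dots,x_4]]$ are domains in which $x_j$ is a non-zero-divisor --- the sequence $(\partial_{x_j} f_1)_j$ is regular at once. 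For $(\partial_{x_j} f_2)_j$ I would additionally invoke the standard fact that, in a Noetherian local ring such as $\SR$, any permutation of a regular sequence lying in the maximal ideal is again regular.

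There is essentially no hard step here; the only point needing a little care is the permutation invariance used for $\d f_2$, whose coefficients form a reordering of $x_1,\dots,x_4$ rather than that list itself. If one prefers to avoid quoting permutation invariance, I would instead verify regularity of the reordered sequence by hand: each partial quotient $\SR/\langle 2x_2\rangle$, $\SR/\langle 2x_2,2x_1\rangle$, $\SR/\langle 2x_2,2x_1,2x_4\rangle$ is again a power series ring in the remaining variables, in which the next coefficient acts as a non-zero-divisor. Either way, applying the Koszul lemma to $\alpha_1$ and $\alpha_2$ gives the exactness of $\wedge\,\d f_1$ and $\wedge\,\d f_2$.
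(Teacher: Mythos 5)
Your proof is correct and is essentially the paper's own (implicit) argument: the corollary is stated without proof precisely because it follows from the Koszul lemma once one observes that the coefficient tuples of $\d f_1$ and $\d f_2$ are, up to multiplication by units and a permutation of the variables, the regular sequence $x_1,x_2,x_3,x_4$ in $\SR$. Your explicit verification of regularity, including the care taken with the reordering for $\d f_2$, is exactly the intended justification.
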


\subsubsection{Standard bases}\label{sec:prel standardbases}
The notion of standard bases was introduced by Hironka in \cite{Hir64} and Buchberger \cite{Buc65}. For more details see e.g\, \cite{Eis95}[chapter 15] or \cite{GP08}[chapter 1]. We only give a brief summary of the most relevant facts for us. We will use standard bases to compute ideal membership and intersection of ideals in the ring of power series in an easy way. 

\begin{remark}
    All our computations are done for the various degrees of homogeneous polynomials. Hence it would be enough to use Gr\"obner basis. However, due to our general setting we decide to recap the theory for power series.
\end{remark}

Throughout this subsection let $\SR:= \R[[x_1,\dots ,x_n]]$. On $\SR$ we consider the local ordering obtained from the following monomial ordering (see \cite{GP08}[section 1.2]):
\[ x^{\alpha} > x^{\beta} \quad \:\Leftrightarrow\quad |\alpha|<|\beta| \ \text{ or }\ |\alpha|=|\beta| \text{ and } \exists 1\le i\le n: \ \alpha_1=\beta_1, \dots ,\alpha_{i-1}=\beta_{i-1},\alpha_i>\beta_i.\]
For $f\in \SR$ written as
\[ f= \sum_{\alpha\in \N^n} a_{\alpha}x^{\alpha}\] 
and any subset $G\subset \SR$ we define 
\[ LM(f):= \max \{ x^{\alpha}| a_{\alpha}\ne 0\}, \quad LC(f)=\{ a_{\alpha}| LM(f)=x^{\alpha}\} \quad \text{ and }\quad  L(G):= \langle LM(g)| g\in G\setminus\{0\}\rangle .\]
A standard basis is defined as follows.
\begin{definition}
    Let $\SI\subset \SR$ be an ideal. A standard basis of $\SI$ is a finite set $G\subset\SI$ with 
    \[ G\subset \SI \qquad \text{ and }\qquad L(\SI)=L(G).\]
\end{definition}
Moreover we need the following definitions.
\begin{definition}Let $G\subset \SR$.
\begin{enumerate} 
    \item $f\subset \SR$ is called reduced with respect to $G$ if no monomial of $f$ is contained in $LG(G)$. 
    \item $G$ is called reduced if $0\ne G$, for all $f\ne g\in G$ then $LM(f)\nmid LM(g)$ and for all $g\in G$, $LC(g)=1$ and $g-LM(g)$ is reduced in $G$.
\end{enumerate}
\end{definition}
Now we have the following proposition.
\begin{proposition}
Let $f ,f_1,\dots , f_m \in \SR$ then there exist $q_1,\dots,q_m,r\in \SR$ such that
\[ f=\sum_{i=1}^mq_if_i+r\]
and $r$ is reduced with respect to $\{f_1,\dots ,f_m\}$ and for all $i=1,\dots m$ we have $LM(q_if_i)\le LM(f)$.

Moreover, if $\{f_1,\dots , f_m\}$ is reduced, then $r$ is unique.
\end{proposition}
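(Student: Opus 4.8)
The plan is to establish existence by an iterated division procedure and uniqueness by a leading-monomial argument. The feature that distinguishes this from the polynomial (global-ordering) situation, and what I expect to be the main obstacle, is that the division process need \emph{not} terminate after finitely many steps for a local ordering; I will therefore have to produce $q_1,\dots,q_m,r$ as limits, using that $\SR=\R[[x_1,\dots,x_n]]$ is complete.

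\textbf{Existence via a reduction step.} First I would set up a single reduction step on a nonzero $g\in\SR$. One inspects $LM(g)$. If no $LM(f_i)$ divides $LM(g)$, then the leading term $LC(g)\,LM(g)$ is already reduced with respect to $\{f_1,\dots,f_m\}$, so I move it into the remainder, replacing $g$ by $g-LC(g)\,LM(g)$. If instead $LM(f_i)\mid LM(g)$ for some $i$, I write $LM(g)=x^{\gamma}LM(f_i)$, set $t:=\tfrac{LC(g)}{LC(f_i)}x^{\gamma}$, record $t$ as a contribution to $q_i$, and replace $g$ by $g-t f_i$. In either case the monomial $LM(g)$ is cancelled, so the new leading monomial is strictly smaller than $LM(g)$ in the given ordering. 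Starting from $g=f$ and iterating, every monomial produced throughout the process is $\le LM(f)$, since each subtraction only introduces monomials $\le LM(g)\le LM(f)$.

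\textbf{Convergence.} This is the delicate point. Because the ordering is local, ``strictly decreasing in $>$'' means the total degree of the leading monomial weakly increases while, within a fixed total degree, the lexicographic order strictly decreases. As there are only finitely many monomials of each total degree, only finitely many reduction steps can occur before the total degree of the leading monomial strictly increases; hence these degrees tend to $+\infty$. Consequently, for each fixed total degree $d$, only finitely many steps ever alter the degree-$d$ homogeneous components of the accumulated quotients and remainder, so the partial sums are Cauchy in the $\mathfrak{m}$-adic topology. By completeness of $\SR$ they converge to $q_1,\dots,q_m,r\in\SR$ with $f=\sum_{i}q_i f_i+r$. By construction no monomial of $r$ is divisible by any $LM(f_i)$, i.e.\ no monomial of $r$ lies in $L(G)$, so $r$ is reduced; and since each quotient term $t$ contributed to $q_i$ satisfies $LM(t f_i)\le LM(f)$, the leading monomial of the sum obeys $LM(q_i f_i)\le LM(f)$, as required.

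\textbf{Uniqueness.} Assume $G=\{f_1,\dots,f_m\}$ is reduced and that $f=\sum_i q_i f_i+r=\sum_i q_i' f_i+r'$ are two decompositions of the stated form. Then
\[
r-r'=\sum_{i}(q_i'-q_i)\,f_i
\]
lies in the ideal $\SI=\langle f_1,\dots,f_m\rangle$. Suppose $r\ne r'$. Since $r$ and $r'$ are both reduced, no monomial of $r-r'$ is divisible by any $LM(f_i)$, so $LM(r-r')\notin L(G)$. On the other hand, $r-r'$ is a nonzero element of $\SI$, and the defining property of a reduced standard basis is exactly $L(\SI)=L(G)$, which forces $LM(r-r')\in L(G)$. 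This contradiction shows $r=r'$, giving uniqueness of the remainder.
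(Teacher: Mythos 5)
Your existence argument is correct, and it supplies exactly the content that the paper outsources: the paper proves existence by citing \cite{GP08}[Theorem 6.4.1] (Grauert division), and your successive-cancellation procedure with $\mathfrak{m}$-adic convergence is the standard proof of that theorem for a local ordering. Two small points to make explicit: (i) the residual $g_k$ of the process also tends to $0$ (its leading monomial has total degree tending to $\infty$, so $g_k\in\mathfrak{m}^{d_k}$ with $d_k\to\infty$), which is what lets you pass from the finite-stage identities $f=\sum_i q_i^{(k)}f_i+r^{(k)}+g_k$ to the limit identity; (ii) for $LM(q_if_i)\le LM(f)$ you should note that the monomials contributed to a fixed $q_i$ are pairwise distinct (the leading monomials $LM(g_k)$ strictly decrease), so no cancellation in the sum can enlarge $LM(q_i)$. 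Both are one-line additions.

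The uniqueness step has a genuine mismatch with the stated hypothesis. You invoke $L(\SI)=L(G)$, but that is the definition of $G$ being a \emph{standard basis} of $\SI=\langle f_1,\dots,f_m\rangle$; the proposition assumes only that $G$ is \emph{reduced}, and by the paper's definitions reducedness does not imply the standard-basis property. This gap cannot be closed by a better argument, because the statement as literally written is false. In $\R[[x,y]]$ take $G=\{f_1,f_2\}=\{x^2-y^3,\,xy\}$: this is reduced ($LM(f_1)=x^2$ and $LM(f_2)=xy$ do not divide each other, both leading coefficients are $1$, and the tail $-y^3$ is reduced with respect to $G$), but it is not a standard basis, since $yf_1-xf_2=-y^4$ and $y^4\notin L(G)=\langle x^2,xy\rangle$. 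Now for $f=x^2y$ we have two decompositions
\begin{equation*}
  f= x\, f_2 + 0 \qquad\text{and}\qquad f= y\, f_1 + y^4,
\end{equation*}
and both satisfy every requirement of the proposition: the remainders $0$ and $y^4$ are reduced with respect to $G$, and $LM(xf_2)=LM(yf_1)=x^2y=LM(f)$. Yet the remainders differ. So uniqueness genuinely needs $G$ to be a (reduced) standard basis --- which is how the proposition is actually applied in Corollary \ref{cor: ideal member} --- and under that hypothesis your argument is correct and is precisely the ``compare the monomials of $r-r'$'' proof the paper intends. The paper's own formulation (and its one-line uniqueness proof) suffers from the same conflation; in your write-up you should state explicitly that you are using the standard-basis property, since ``reduced'' alone does not provide it.
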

\begin{proof}
The existence of $r$ follows from \cite{GP08}[Theorem 6.4.1]. Uniqueness follows by comparing the monomials of two different $r$'s.    
\end{proof}
For $f_1,\dots f_m\in \SR$ as above we call $r$ the normal form of $f\in \SR$, i.e.
\[ NF(f|\{f_1,\dots f_m\}):=r.\]
\begin{corollary}\label{cor: ideal member}
    Let $\SI\subset \SR$ be an ideal, $G\subset \SI$ a reduced standard basis of $\SI$. Then for any $f\in \SR$:
    \[ f\in \SI \qquad \Leftrightarrow \qquad NF(f,G)=0.\]
\end{corollary}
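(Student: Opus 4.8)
The plan is to prove the two implications separately, using the preceding normal-form proposition as the essential input: for any $f\in\SR$ it provides a decomposition $f=\sum_{g\in G}q_g\,g+r$ with $r$ reduced with respect to $G$ and, since $G$ is reduced, with $r$ uniquely determined; this $r$ is by definition $NF(f,G)$.

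First I would dispatch the easy direction. If $NF(f,G)=0$, then the normal-form decomposition reads $f=\sum_{g\in G}q_g\,g$ with $q_g\in\SR$. Since each $g\in G\subset\SI$ and $\SI$ is an ideal, this exhibits $f$ as an element of $\SI$, proving $NF(f,G)=0\Rightarrow f\in\SI$.

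For the converse, assume $f\in\SI$ and set $r:=NF(f,G)$, so that $f=\sum_{g\in G}q_g\,g+r$ with $r$ reduced with respect to $G$. Because every $g\in G$ lies in $\SI$, the combination $\sum_{g}q_g\,g$ lies in $\SI$, and hence $r=f-\sum_{g}q_g\,g\in\SI$ as well. I would then show that $r$ must vanish. Suppose not; then $LM(r)$ is the leading monomial of a nonzero element of $\SI$, so $LM(r)\in L(\SI)$. As $G$ is a standard basis of $\SI$ we have $L(\SI)=L(G)$, whence $LM(r)\in L(G)$; that is, $LM(r)$ is divisible by $LM(g)$ for some $g\in G$. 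This contradicts the reducedness of $r$, which requires that no monomial of $r$ — in particular not $LM(r)$ — lie in $L(G)$. Therefore $r=0$, i.e. $NF(f,G)=0$.

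The argument is essentially routine once the normal-form proposition is available, so there is no genuine obstacle; the single point that must be used carefully is the defining property $L(\SI)=L(G)$ of a standard basis, which is precisely what converts membership information about $r$ (namely $r\in\SI$) into divisibility information about $LM(r)$ and thereby triggers the contradiction with reducedness. I note that the uniqueness half of the normal-form proposition is not strictly needed for the equivalence itself, but it guarantees that $NF(f,G)$ is a well-defined object, so that the condition $NF(f,G)=0$ is unambiguous.
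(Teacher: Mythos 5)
Your proof is correct. For comparison: the paper does not actually prove this corollary at all --- its ``proof'' is a one-line citation to \cite{GP08}[Lemma 1.6.7] --- so what you have written is a self-contained version of the standard argument that the cited reference contains. Your two directions are exactly the canonical ones: the easy implication uses only $G\subset\SI$ and the ideal property, and the converse uses that $r=NF(f,G)=f-\sum_{g}q_g g$ lies in $\SI$, so that $r\neq 0$ would force $LM(r)\in L(\SI)=L(G)$, contradicting reducedness of $r$ (which, by the paper's definition, forbids any monomial of $r$ from lying in $L(G)$). Two small remarks: the intermediate step ``$LM(r)\in L(G)$ means $LM(r)$ is divisible by $LM(g)$ for some $g$'' invokes the standard fact that a monomial belongs to a monomial ideal iff it is divisible by one of the generators --- true, but also unnecessary here, since membership $LM(r)\in L(G)$ already contradicts the paper's definition of reduced directly. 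And your closing observation is apt: uniqueness of the normal form (which is where reducedness of $G$ enters the paper's Proposition) is only needed to make $NF(f,G)$ well defined, not for the equivalence itself, since the contradiction argument works for any decomposition $f=\sum_g q_g g+r$ with $r$ reduced with respect to $G$.
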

\begin{proof}
    For a proof see \cite{GP08}[Lemma 1.6.7].
\end{proof} 
The following applications will be useful later. Let $\SR:= \R[[x_1,x_2,x_3,x_4]]$ and let $\SJ\subset \SR$ be the ideal defined by 
\begin{align}\label{eq: ideal df12}
    \SJ =\SJ(\d f_1,\d f_2) = \langle G:=\{x_1^2+x_2^2,x_3^2+x_4^2,x_1x_3+x_2x_4,x_1x_4-x_2x_3\}\rangle _\SR.
\end{align}
We have the following two results.
\begin{lemma}\label{ex: general 1}
Consider the module $\SM$ over $\R[[x_2^2,x_4]]$ defined by
\begin{align*}
    \SM:= \langle x_1,x_2,x_3,x_4\rangle_{\R[[x_2^2,x_4]]}.
\end{align*}
Then:
\begin{equation*}
    \SJ\cap \R[[f_1,f_2]]=\SJ\cap (\R[[f_1,f_2]]+\SM) = (f_1^2+f_2^2)\R[[f_1,f_2]] \qquad \text{ and } \qquad \R[[f_1,f_2]]\cap \SM=\{0\}.
\end{equation*}
\end{lemma}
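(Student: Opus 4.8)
The plan is to complexify. Writing $z = x_1 + i x_2$ and $w = x_3 + i x_4$ as in the introduction and extending scalars to $\SR_\C := \C\otimes_\R\SR = \C[[z,\bar z,w,\bar w]]$, a direct check rewrites the four generators of $\SJ$ as $z\bar z,\ w\bar w,\ z\bar w,\ \bar z w$, so that
\[ \SJ_\C := \C\otimes_\R\SJ = \langle z,w\rangle\cdot\langle\bar z,\bar w\rangle \]
is the product of the "holomorphic" and "antiholomorphic" maximal ideals. Consequently a monomial $z^a w^b\bar z^c\bar w^d$ lies in $\SJ_\C$ exactly when it carries both a holomorphic and an antiholomorphic factor, i.e.\ when $a+b\ge 1$ and $c+d\ge 1$. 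Hence $\SR_\C/\SJ_\C$ has a basis consisting of the purely holomorphic monomials $z^a w^b$ together with the purely antiholomorphic monomials $\bar z^c\bar w^d$, the constant being shared. This clean picture is the engine of the proof.

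First I would prove $\SJ\cap\R[[f_1,f_2]] = (f_1^2+f_2^2)\R[[f_1,f_2]]$. The inclusion $\supseteq$ is the elementary identity
\[ f_1^2+f_2^2 = (x_1^2+x_2^2)^2 + (x_3^2+x_4^2)^2 + 2(x_1x_3+x_2x_4)^2 - 2(x_1x_4-x_2x_3)^2 \in \SJ. \]
For $\subseteq$ I pass to $\SR_\C$. Since $f_1+if_2 = z^2+w^2 =: g$ is holomorphic and $f_1-if_2 = \bar g = \bar z^2+\bar w^2$ is antiholomorphic, we have $\C\otimes_\R\R[[f_1,f_2]] = \C[[g,\bar g]]$. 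In $\SR_\C/\SJ_\C$ the element $g^m\bar g^n$ vanishes precisely when $m\ge 1$ and $n\ge 1$, while the images of $\{g^m\}_{m\ge 0}$ are $\C$-linearly independent in the holomorphic part (their lowest degrees $2m$ are distinct), and likewise for $\{\bar g^n\}_{n\ge 0}$ in the antiholomorphic part. Therefore $\C[[g,\bar g]]\cap\SJ_\C = g\bar g\,\C[[g,\bar g]] = (f_1^2+f_2^2)\C[[f_1,f_2]]$; since all three subspaces are defined over $\R$, taking invariants under the complex conjugation fixing $\SR$ recovers the identity over $\R$.

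For the remaining claims I would first establish $\R[[f_1,f_2]]\cap\SM = \{0\}$. Grading $\SR$ by the joint degree in the pair $(x_1,x_3)$, every element of $\SM$ has $(x_1,x_3)$-degree at most $1$, whereas $f_1$ has leading part $x_1^2+x_3^2$ of $(x_1,x_3)$-degree $2$; a short argument — restricting to $x_2=x_4=0$ to kill the degree-$\le 1$ contribution, then peeling off successive factors of $f_2$ — shows a nonzero $p\in\R[[f_1,f_2]]$ always has a nonvanishing $(x_1,x_3)$-component of degree $\ge 2$ and so cannot lie in $\SM$. Granting this, the $\SM$-identity reduces to the first part: given $p+m\in\SJ$ with $p\in\R[[f_1,f_2]]$ and $m\in\SM$, it suffices to show $m\in\R[[f_1,f_2]]$, for then $m=0$ by disjointness and $p\in\SJ\cap\R[[f_1,f_2]]=(f_1^2+f_2^2)\R[[f_1,f_2]]$.

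I expect the main obstacle to be precisely this last reduction, namely controlling $\SM$ modulo $\SJ$: in contrast to the transparent product structure of $\SJ_\C$, the image of $\SM$ in the quotient mixes holomorphic, antiholomorphic and constant pieces, so verifying that $\SM$ meets $\SJ+\R[[f_1,f_2]]$ only inside $\R[[f_1,f_2]]$ requires genuine bookkeeping rather than a one-line identity. The systematic alternative, matching the preliminaries, is to compute a reduced standard basis of $\SJ$ for the local order of Section~\ref{sec:prel standardbases} and to realize all three intersections via an elimination computation (introducing variables $y_j=f_j$ and eliminating the $x_i$), after which each membership is decided by the normal-form criterion of Corollary~\ref{cor: ideal member}.
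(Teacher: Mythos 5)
Your complexification argument for the first identity is correct and is genuinely different from (and arguably cleaner than) the paper's route: identifying $\SJ_\C=\langle z,w\rangle\cdot\langle \bar z,\bar w\rangle$ turns $\SJ_\C$ into a monomial ideal, and since $\C\otimes_\R\R[[f_1,f_2]]=\C[[g,\bar g]]$ with $g=z^2+w^2$, the equality $\SJ\cap\R[[f_1,f_2]]=(f_1^2+f_2^2)\R[[f_1,f_2]]$ drops out in a few lines, whereas the paper obtains it by normal-form computations against the reduced basis $G$ of \eqref{eq: ideal df12} (Corollary~\ref{cor: ideal member}). But this is the easy part of the lemma, and your proposal stops exactly where the real work begins. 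The identity $\SJ\cap(\R[[f_1,f_2]]+\SM)=(f_1^2+f_2^2)\R[[f_1,f_2]]$ says that $p+m\in\SJ$ with $p\in\R[[f_1,f_2]]$, $m\in\SM$ forces $m=0$ and $p\in(f_1^2+f_2^2)\R[[f_1,f_2]]$. Your reduction (``it suffices to show $m\in\R[[f_1,f_2]]$'') is logically valid but has no content: given disjointness, $m\in\R[[f_1,f_2]]$ is \emph{equivalent} to $m=0$, which is precisely what has to be proved, and you give no argument for it — you explicitly call it the main obstacle and defer to a standard-basis/elimination computation that is never carried out. That mixed intersection is the heart of the lemma: it is what the paper's proof establishes by computing the normal forms $r_g$ and $r_h$ of the two pieces degree by degree and checking that $r_g+r_h=0$ forces $g=h=0$, and it is the part actually invoked later (degree $0$ and degree $1$ of Theorem~\ref{main theorem}, Lemma~\ref{lemma: vector field normal form}). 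So the proposal, as it stands, proves one of the three assertions and leaves the central one open.

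A secondary, repairable error: in your argument for $\R[[f_1,f_2]]\cap\SM=\{0\}$, the intermediate claim that every nonzero $p\in\R[[f_1,f_2]]$ has a nonvanishing $(x_1,x_3)$-component of degree $\ge 2$ is false — $p=f_2=2(x_1x_2+x_3x_4)$ is purely of $(x_1,x_3)$-degree $1$. The correct conclusion of your restriction-and-peeling argument is that the elements of $\R[[f_1,f_2]]$ of $(x_1,x_3)$-degree $\le 1$ are exactly $c_0+c_1f_2$; one must then check separately that such an element lies in $\SM$ only if $c_0=c_1=0$: the constant term kills $c_0$, and $c_1f_2\in\SM$ would require its $x_1$-coefficient $2c_1x_2$ to lie in $\R[[x_2^2,x_4]]$, which fails for $c_1\neq 0$ since it is odd in $x_2$. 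Note that this parity-in-$x_2$ bookkeeping is a miniature version of exactly the analysis you are missing for the mixed intersection, which is a further indication that the latter cannot be waved away.
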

\begin{proof}
Note that $G$ is reduced, hence we can use Corollary \ref{cor: ideal member}. Moreover, $\SJ$ is generated by homogeneous polynomials. Hence it is enough to check the statement for homogeneous degree polynomials. For odd homogeneous degree $n=2m+1$ we only have contributions from $\SM$. We write $h\in \SM$ as
\[ h= \sum_{j=1}^4 x_j\sum_{i=0}^m a_i^j x_2^{2i}x_4^{2(m-i)}.\]
Then we obtain by exchanging $x_1x_4$ with $x_2x_3$ that
\begin{align*}
    r:=NF(h|G)= &\, a_m^1 x_1x_2^{2m} +x_3\left(\sum_{i=0}^m a_i^3 x_2^{2i}x_4^{2(m-i)}+\sum_{i=0}^{m-1}a_i^1 x_2^{2i+1}x_4^{2(m-i)-1}\right) \\
    &\, +\sum_{i=0}^m \left(a_i^2x_2^{2i+1}x_4^{2(m-i)}+a_i^4x_2^{2i}x_4^{2(m-i)+1}\right).
\end{align*} 
Hence we have 
\[ h\in \SJ \quad \Leftrightarrow \quad r=0 \quad \Leftrightarrow \quad h=0.\]
For the even case $n=2m>0$ we observe that
\[ f_1^2 +f_2^2 =(x_1^2+x_2^2)^2 +(x_3^2+x_4^2)^2 +2(x_1x_3+x_2x_4)^2 -2(x_1x_4-x_2x_3)^2, \]
hence it is enough to consider $g\in \R[[f_1,f_2]]$ of the form
\[ g= g_1 f_1^m+g_2f_1^{m-1}f_2. \]
We have 
\begin{align}\label{eq: rest f1f2}
    r_g= NF(g|G)= g_1(-2)^{m}(x_2^2+x_4^2)^{m}+g_2(-1)^{m-1}2^m\left(x_1x_2^{2m-1}+x_3\frac{(x_2^2+x_4^2)^{m}-x_2^{2m}}{x_4}\right).
\end{align}

 We write an element $h\in \SM$ as
\[ h= \sum_{j=1}^4 x_j\sum_{i=0}^{m-1} a_i^j x_2^{2i}x_4^{2(m-i)-1}.\]
Note that
\begin{align*}
    r_h= &\, x_3\sum_{i=0}^{m-1} \left(a_i^3 x_2^{2i}x_4^{2(m-i)-1}+a_i^1 x_2^{2i+1}x_4^{2(m-i-1)}\right)  +\sum_{i=0}^{m-1} \left(a_i^2x_2^{2i+1}x_4^{2(m-i)-1}+a_i^4x_2^{2i}x_4^{2(m-i)+1}\right)
\end{align*}
and we conclude
\[ g+h\in \SJ \quad \Leftrightarrow \quad r_g+r_h=0 \quad \Leftrightarrow \quad g=h=0.\]
\end{proof}

\begin{lemma}\label{ex: general}
Let $g\in \R[[f_1,f_2]]$ and $(p,(q_1,q_2))\in \R[[x_2]] \oplus \R[[x_2,x_4]]^2$ be such that 
\begin{align*}
    g\cdot \mu =&\, \dif f_1 \wedge \dif (x_1 p+x_3q_1+q_2)(\dif x_1\wedge\dif x_4+\dif x_2\wedge\dif x_3) \mod \SJ\cdot \mu , 
\end{align*}
where $\mu$ is the standard volume form on $\R^4$. Then we have:
\begin{align*}
    x_2p+x_4q_1, q_2 \in \R[[x_2^2+x_4^2]] \qquad \text{ and }\qquad g\in (f_1^2+f_2^2)\R[[f_1,f_2]],
\end{align*}
and the same result holds if we replace $\dif f_1$ by $\dif f_2$.
\end{lemma}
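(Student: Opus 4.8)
The plan is to turn the hypothesis into an ideal‑membership statement and then analyze it with the standard basis $G$ of $\SJ$ from \eqref{eq: ideal df12}. Write $\eta:=\dif x_1\wedge\dif x_4+\dif x_2\wedge\dif x_3$ (which is closed) and $F:=x_1p+x_3q_1+q_2$, so that $\omega:=F\eta$ has $\dif\omega=\dif F\wedge\eta$. A direct expansion of the de Rham differential gives
\[
\dif f_1\wedge\dif F\wedge\eta = H\,\mu, \qquad H = 2\bigl(x_4\,\partial_{x_1}F - x_3\,\partial_{x_2}F - x_2\,\partial_{x_3}F + x_1\,\partial_{x_4}F\bigr),
\]
and substituting $\partial_{x_1}F=p$, $\partial_{x_2}F=x_1p'+x_3\partial_{x_2}q_1+\partial_{x_2}q_2$, $\partial_{x_3}F=q_1$, $\partial_{x_4}F=x_3\partial_{x_4}q_1+\partial_{x_4}q_2$ yields an explicit $H\in\SR$. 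The hypothesis then reads $g-H\in\SJ$, i.e.\ $NF(g\mid G)=NF(H\mid G)$ by Corollary \ref{cor: ideal member}.

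Next I would compute $NF(H\mid G)$. Using the reductions induced by $G$, namely $x_1^2\equiv-x_2^2$, $x_3^2\equiv-x_4^2$, $x_1x_3\equiv-x_2x_4$ and $x_1x_4\equiv x_2x_3$, the normal form collapses to
\[
\tfrac12\,NF(H\mid G) = \rho\Phi + x_1A + x_3\bigl(x_2B-\partial_{x_2}q_2\bigr),
\]
where $\rho:=x_4\partial_{x_2}-x_2\partial_{x_4}$, $\Phi:=x_2p+x_4q_1$, $A:=\partial_{x_4}q_2|_{x_4=0}$ and $x_4B:=\partial_{x_4}q_2-A$. The only nonroutine point here is that the pure $(x_2,x_4)$‑terms assemble into $\rho\Phi$: setting $u=x_2p$, $v=x_4q_1$ one checks the five surviving monomials equal $x_4\partial_{x_2}(u+v)-x_2\partial_{x_4}(u+v)=\rho\Phi$ (using $\partial_{x_4}u=0$). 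All three displayed pieces are already in standard form.

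On the other side, \eqref{eq: rest f1f2} gives $NF(g\mid G)=\sum_m c_m(x_2^2+x_4^2)^m+\sum_m d_m\bigl(x_1x_2^{2m-1}+x_3\varepsilon_m\bigr)$ with $\varepsilon_m=\frac{(x_2^2+x_4^2)^m-x_2^{2m}}{x_4}$, and by Lemma \ref{ex: general 1} one has $g\in(f_1^2+f_2^2)\R[[f_1,f_2]]$ iff all $c_m=d_m=0$. Comparing coefficients of the three families of standard monomials ($x_2^bx_4^d$, $x_1x_2^b$, $x_2^bx_3x_4^d$) in $NF(g\mid G)=NF(H\mid G)$ produces
\[
2\rho\Phi=\textstyle\sum_m c_m(x_2^2+x_4^2)^m,\quad 2A=\textstyle\sum_m d_m x_2^{2m-1},\quad 2\bigl(x_2B-\partial_{x_2}q_2\bigr)=\textstyle\sum_m d_m\varepsilon_m.
\]

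The conceptual heart is that $\rho$ is the generator of rotations in the $(x_2,x_4)$‑plane: in the complex coordinate $\zeta=x_2+ix_4$ it acts as $-i(\zeta\partial_\zeta-\bar\zeta\partial_{\bar\zeta})$, so it is diagonal with $\ker\rho=\R[[x_2^2+x_4^2]]$ and $\R[[x_2,x_4]]=\ker\rho\oplus\im\rho$. In the first equation the left side lies in $\im\rho$ and the right side in $\ker\rho$, forcing both to vanish: $\rho\Phi=0$ (hence $x_2p+x_4q_1\in\R[[x_2^2+x_4^2]]$) and all $c_m=0$. For $q_2$, multiply the third equation by $x_4$, use $x_4\varepsilon_m=(x_2^2+x_4^2)^m-x_2^{2m}$ and $x_4B=\partial_{x_4}q_2-A$, and substitute the second equation; everything telescopes to $\rho q_2=-\tfrac12\sum_m d_m(x_2^2+x_4^2)^m$, and the same kernel/image dichotomy gives $q_2\in\R[[x_2^2+x_4^2]]$ and all $d_m=0$. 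With every $c_m=d_m=0$ we get $NF(g\mid G)=0$, i.e.\ $g\in\SJ\cap\R[[f_1,f_2]]=(f_1^2+f_2^2)\R[[f_1,f_2]]$. The $\dif f_2$ case is handled identically after recomputing $H$ (there the pure part equals $-\rho q_2$ and the same operator $\rho$ drives the argument). I expect the main obstacle to be purely organizational: carrying out the normal‑form reduction cleanly and isolating the combination $x_2p+x_4q_1$; once $\rho$ is recognized as the planar rotation with $\ker\rho=\R[[x_2^2+x_4^2]]$ and $\ker\rho\cap\im\rho=0$, all three conclusions drop out at once.
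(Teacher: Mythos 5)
Your proof is correct; I verified the computations it rests on. The expansion $\dif f_1\wedge\dif F\wedge\eta=H\,\mu$ with $H=2\bigl(x_4\partial_{x_1}F-x_3\partial_{x_2}F-x_2\partial_{x_3}F+x_1\partial_{x_4}F\bigr)$ reproduces exactly the display at the start of the paper's proof; the reductions $x_1^2\equiv-x_2^2$, $x_3^2\equiv-x_4^2$, $x_1x_3\equiv-x_2x_4$, $x_1x_4\equiv x_2x_3$ do land every term of $H$ in the reduced monomial families $x_2^ax_4^b$, $x_1x_2^a$, $x_3x_2^ax_4^b$, with pure $(x_2,x_4)$-part equal to $\rho\Phi$; and the telescoping to $\rho q_2=-\tfrac12\sum_m d_m(x_2^2+x_4^2)^m$ checks out, as does the symmetric structure of the $\dif f_2$ case. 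You share the paper's skeleton---both proofs convert the hypothesis into the equality of normal forms $NF(g\mid G)=NF(H\mid G)$ and feed in \eqref{eq: rest f1f2} together with Lemma \ref{ex: general 1}---but your closing mechanism is genuinely different. The paper restricts to homogeneous components, treats odd and even degrees separately, expands $p,q_1,q_2$ into coefficients $p_0,a_i,b_i$, and derives and solves linear recurrences such as $(n-i)a_i=-(i+2)a_{i+2}$; those recurrences are precisely the matrix of your operator $\rho=x_4\partial_{x_2}-x_2\partial_{x_4}$ in the monomial basis. You diagonalize instead: $\rho$ is semisimple with $\ker\rho=\R[[x_2^2+x_4^2]]$ and $\R[[x_2,x_4]]=\ker\rho\oplus\im\rho$ degreewise, so your first equation and the telescoped third one each equate an element of $\im\rho$ with an element of $\ker\rho$, forcing both sides to vanish at once. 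This buys uniformity (no homogeneous reduction, no odd/even split), all three conclusions from a single dichotomy, and a conceptual explanation of why $\R[[x_2^2+x_4^2]]$ appears in the statement at all: it is the ring of rotation invariants in the $(x_2,x_4)$-plane. What the paper's route offers in exchange is pure elementarity---nothing beyond the division algorithm and coefficient bookkeeping. If you write yours up, record two facts you use silently: linearity and uniqueness of the reduced normal form (so that $g-H\in\SJ$ is indeed equivalent to $NF(g\mid G)=NF(H\mid G)$, via Corollary \ref{cor: ideal member}), and the disjointness of the three reduced monomial families, which is what justifies splitting one identity into your three equations.
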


\begin{proof}
By a direct computation we obtain that the right hand side of the statement equals:
\begin{align*}
    2\Big(-x_2 q_1  - x_3(x_1\partial_2 p+ x_3 \partial_2 q_1 + \partial_2 q_2) + x_1(x_3 \partial_4 q_1 + \partial_4 q_2)+x_4p\Big)\mu.
\end{align*}
Note that again it is enough to check the statement for homogeneous polynomials. We set
\[ p = p_0x_2^{n-1},\quad q_1 = \sum_{i = 0}^{n-1} a_i x_2^i x_4^{n-1-i},\quad q_2 = \sum_{i=0}^{n} b_i x_2^i x_4^{n-i}.\]
Then we obtain for the right hand side:
\begin{align*}
    r_R:= NF(& -x_2 q_1  - x_3(x_1\partial_2 p+ x_3 \partial_2 q_1 + \partial_2 q_2) + x_1(x_3 \partial_4 q_1 + \partial_4 q_2)+x_4p | G) =   \\
    &\, -\sum_{i= 0}^{n-1} a_i x_2^{i+1}x_4^{n-1-i}  +(n-1)p_0x_2^{n-1}x_4 +\sum_{i= 0}^{n-1} ia_i x_2^{i-1}x_4^{n-i+1} -\sum_{i= 0}^{n} ib_i x_2^{i-1} x_3 x_4^{n-i}  \\
    &\, -\sum_{i= 0}^{n-1}(n-i-1) a_i x_2^{i+1} x_4^{n-1-i} +  \sum_{i= 0}^{n-2}(n-i)b_i x_2^{i+1} x_3 x_4^{n-i-2} + b_{n-1}x_1 x_2^{n-1} +p_0x_2^{n-1}x_4\\
    = &\, np_0x_2^{n-1}x_4 +a_1x_4^n -a_{n-1}x_2^{n}-2a_{n-2}x_2^{n-1}x_4-\sum_{i= 0}^{n-3}\left((n-i) a_i -(i+2)a_{i+2}\right)x_2^{i+1}x_4^{n-1-i} \\
    & -b_1x_3x_4^{n-1}+ b_{n-1}x_1 x_2^{n-1}+ \sum_{i= 0}^{n-2}\left( (n-i)b_i - (i+2)b_{i+2} \right) x_2^{i+1} x_3 x_4^{n-i-2} .
\end{align*}
If $n=2m+1$ is odd then $g$ does not contribute and we obtain the condition
\begin{align*}
    (2m+1-i)a_i=-(i+2) a_{i+2}, \ 0\leq i \leq 2m-2,\quad a_1 =  a_{2m}= 0, \quad 2a_{2m-1}=(2m+1)p_0\\
    (i+2) b_{i+2} =(2m+1-i)b_i ,\  0 \leq i \leq 2m-1, \quad b_1 = b_{2m} = 0,
\end{align*}
which imply that $p=q_1=q_2=0$.

For $n=2m$ is even we note that
\[ f_1^2 +f_2^2 =(x_1^2+x_2^2)^2 +(x_3^2+x_4^2)^2 +2(x_1x_3+x_2x_4)^2 -2(x_1x_4-x_2x_3)^2. \]
Hence it is enough to consider $g = g_1 f_1^m + g_2 f_1^{m-1}f_2$ as in the previous lemma. From \eqref{eq: rest f1f2} we obtain the conditions
\begin{align*}
    2(m-i)a_{2i}=2(i+1) a_{2(i+1)}, \quad  
    (2(m-i)-1)a_{2i+1}-(2i+3) a_{2i+3}=(-2)^{m-1}\binom{m}{i+1}g_1, \ 
\end{align*}
for $0\leq i \leq m-2$ and
\begin{align*}
    a_1 = -(-2)^{m-1}g_1, \quad  a_{2m-1}=(-2)^{m-1}g_1 , \quad 2mp_0=2a_{2m-2},
\end{align*}
as well as
\begin{align*}
    2(i+1) b_{2(i+1)} &\, =2(m-i)b_{2i} ,\qquad  0 \leq i \leq m-1,\\
    (2i+3) b_{2i+3} &\, =(2(m-i)-1)b_{2i+1}+ (-2)^{m-1}\binom{m}{i+1}g_2 ,\quad  0 \leq i \leq m-2,\\
    b_1 = (-2)^{m-1}g_2 \quad &\, \quad   b_{2m-1} = -(-2)^{m-1}g_2,
\end{align*}
which implies the statement. The result for $\dif f_2$ follows along the same lines.
\end{proof}

\section{Division groups}\label{sec: division grous}

Consider the space $\Omega^\bullet(M)$ of differential forms on a smooth manifold $M$. It is possible to divide differential forms in the following sence: let $\alpha \in \Omega^1(M)$ be nowhere vanishing then for any $\beta \in \Omega^\bullet(M)$,
\[ \beta \wedge \alpha = 0 \quad \implies\quad \beta = \mu \wedge \alpha,\]
for some $\mu \in \Omega^{\bullet -1}(M)$. This property is extremely useful when manipulating forms but does not hold in general for the exterior algebra of a module. To measure the failure, de Rham \cite{Rha54} introduced the following groups.

Consider a free $\SR$-module $\SM$ with basis $(e_1,\dots,e_n)$. Fix elements $\alpha_1,\dots,\alpha_k \in \SM$. The quotient
\[ \SD^p(\alpha_1,\dots,\alpha_k) := \{ \beta\in \Lambda^p\SM\mid \beta \wedge\alpha_1 \wedge \dots \wedge \alpha_k = 0\}/ \sum_{i=1}^k\left(\alpha_i \wedge \Lambda^{p-1}\SM\right),\]
is called the \textbf{$p$-th division group} associated to $(\alpha_1,\dots,\alpha_k)$.

We make use of a result by Saito \cite{Sai76} about these groups. To state the results we first define the following. Use the basis elements to write
\[ \alpha_1 \wedge \dots \wedge \alpha_k = \sum_{1\leq i_1<\dots<i_k<\leq n} a_{i_1\cdots i_k} e_{i_1} \wedge \dots \wedge e_{i_k},\]
with $a_{i_1\cdots i_k} \in \SR$, and define $\SJ = \SJ(\alpha_1,\dots,\alpha_k) \subset \SR$, the ideal generated by the coefficients $a_{i_1,\cdots,i_k}$.

\begin{definition}
The $\emph{depth}$ $\mathrm{depth}_{\SR}(I,\SM)$ of an ideal $I\subset \SR$ on a finitely generated module $\SM$ is the supremum of the lengths of all $\SM$-regular sequences of elements of $I$.
\end{definition}
\begin{proposition}[\cite{Sai76}]\label{prop:DepthInequality}
The division groups satisfy:
\[ \SD^p(\alpha_1,\dots,\alpha_k) = 0,\]
for all $0 \leq p < \operatorname{depth}(\SJ,\SR)$.
\end{proposition}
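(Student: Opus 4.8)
The plan is to realize each division group as a homology group of a Koszul-type complex built from the $\alpha_i$, and then to deduce its vanishing in low degrees from the depth-sensitivity of Koszul homology. Throughout, I would fix a generator of $\Lambda^n\SM$; this induces, for every $p$, a Hodge-star isomorphism $\Lambda^p\SM \cong \Lambda^{n-p}\SM^*$ under which the operator $\beta \mapsto \alpha_i\wedge\beta$ on $\Lambda^\bullet\SM$ is carried to a contraction $\iota_{\alpha_i}$ of degree $-1$ on $\Lambda^\bullet\SM^*$.

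For the base case $k=1$, write $\alpha_1 = \sum_j a_j e_j$, so that $\SJ = \langle a_1,\dots,a_n\rangle$. Since $\alpha_1\wedge\alpha_1 = 0$, the operator $\wedge\alpha_1$ makes $(\Lambda^\bullet\SM, \wedge\alpha_1)$ into a complex whose cohomology at $\Lambda^p\SM$ is, by definition, $\SD^p(\alpha_1)$. Under the Hodge-star this complex is identified with the Koszul complex $(\Lambda^{\bullet}\SM^*, \iota_{\alpha_1})$ of the sequence $(a_1,\dots,a_n)$, with $\SD^p(\alpha_1)$ corresponding to its homology $H_{n-p}$. The classical depth-sensitivity of the Koszul complex — that $H_i$ vanishes for $i > n - \operatorname{depth}(\SJ,\SR)$ — then gives $\SD^p(\alpha_1) = 0$ exactly when $n - p > n - \operatorname{depth}(\SJ,\SR)$, i.e. for $p < \operatorname{depth}(\SJ,\SR)$. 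This recovers de Rham's lemma with the sharp depth bound.

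For general $k$, set $\omega = \alpha_1\wedge\cdots\wedge\alpha_k$, so that $\SD^p(\alpha_1,\dots,\alpha_k)$ is the homology at $\Lambda^p\SM$ of the three-term complex $\bigoplus_{i=1}^k\Lambda^{p-1}\SM \to \Lambda^p\SM \xrightarrow{\wedge\omega}\Lambda^{p+k}\SM$, the first arrow being $(\gamma_i)\mapsto\sum_i\alpha_i\wedge\gamma_i$ (the composite is zero because each term repeats some $\alpha_i$). I would splice this into a single complex on $\Lambda^\bullet\SM\otimes\Lambda^\bullet\SR^k$ whose differential couples $\wedge\alpha_i$ with the $i$-th auxiliary exterior generator; once the Koszul signs are fixed, the pairwise anticommutativity of the operators $\wedge\alpha_i$ yields $D^2 = 0$, and the division group appears as the homology at a prescribed bidegree. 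Since $\SJ$ is the ideal of maximal minors of the $k\times n$ coefficient matrix, one reads off vanishing below $\operatorname{depth}(\SJ,\SR)$ from the depth-sensitivity of the associated minors complex.

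The main obstacle is precisely this last step for $k \ge 2$: the relevant ideal $\SJ$ is generated by maximal minors rather than by a single regular sequence, so plain Koszul depth-sensitivity does not apply verbatim. The honest way to close the gap is the Peskine–Szpiro acyclicity lemma, which converts bounds on the depth of homology modules into exactness, applied to the minors (Buchsbaum–Rim) complex. An alternative avoiding that machinery is to induct on $g = \operatorname{depth}(\SJ,\SR)$: when $g \ge 1$, choose $t \in \SJ$ a nonzerodivisor, reduce modulo $t$ so that the depth of the coefficient ideal of the reduced forms drops to $g-1$, and feed the inductive hypothesis back through the long exact sequence of multiplication by $t$. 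Either route is conceptually clear; the effort concentrates in checking that the auxiliary choices stay compatible with the wedge structure, which is bookkeeping rather than a single hard idea.
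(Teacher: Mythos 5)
The first thing to note is that the paper does not prove this proposition at all: it is imported verbatim from Saito \cite{Sai76}, so there is no in-paper argument to compare against and your proof has to stand entirely on its own. Its first half does stand: for $k=1$, transporting $(\Lambda^\bullet\SM,\wedge\alpha_1)$ through the volume-element isomorphism to the Koszul complex of the coefficient sequence and quoting Koszul depth-sensitivity is a complete, correct proof of de Rham's lemma with the sharp bound. The trouble is that the paper needs the case of \emph{two} forms: both the vanishing $\SD^1(\d f_1,\d f_2)=0$ (used in Section~\ref{sec: ker poisson diff}) and the contrapositive estimate $\operatorname{depth}(\SJ,\SR)\le 2$ in the proof of Proposition~\ref{proposition: iso division group} are instances of the statement with $k=2$, and for $k\ge 2$ you have not given a proof: you have named two strategies, and each fails at an identifiable point, so the concluding claim that what remains is ``bookkeeping rather than a single hard idea'' is exactly where the proposal breaks down.

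For the spliced complex: the differential you propose only ever involves the individual operators $\wedge\alpha_i$, never the degree-$k$ operator $\wedge\omega$, so the condition $\beta\wedge\omega=0$ defining $\SD^p$ cannot be read off from its homology. Concretely, at the two edges of the auxiliary grading the homology of $D=\sum_i(\wedge\alpha_i)\otimes(e_i\wedge\cdot)$ computes $\Lambda^p\SM/\sum_i\alpha_i\wedge\Lambda^{p-1}\SM$ (no kernel condition at all) and $\bigcap_i\ker(\wedge\alpha_i)$ (the wrong, much smaller kernel); the mixed object $\ker(\wedge\omega)/\sum_i\alpha_i\wedge\Lambda^{p-1}\SM$ is the homology at \emph{no} bidegree. (As an aside, with an exterior auxiliary factor the $(i,j)$ and $(j,i)$ cross terms in $D^2$ add rather than cancel, and for $k\ge 3$ no assignment of degree-dependent signs repairs this; one needs symmetric auxiliary variables.) Identifying the division groups with the homology of a suitable Buchsbaum--Rim-type complex and establishing depth-sensitivity for it \emph{is} the content of Saito's theorem, not a verification. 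For the induction on $g=\operatorname{depth}(\SJ,\SR)$: it never progresses. Reducing modulo a nonzerodivisor $t\in\SJ$ drops the depth of the coefficient ideal to $g-1$, so the inductive hypothesis controls $\SD^p$ over $\SR/(t)$ only for $p<g-1$; the long exact sequence of $0\to C^\bullet\xrightarrow{\ t\ }C^\bullet\to C^\bullet/tC^\bullet\to 0$ together with Nakayama then returns $\SD^p=0$ over $\SR$ for exactly the same range $p<g-1$. The vanishing range never grows with $g$, so this induction proves only the $p=0$ case whatever the depth; the critical case $p=g-1$ is out of reach because the reduction loses precisely the unit of depth that the exact sequence gives back. (This mechanism is absent for $k=1$ only because there the wedge complex extends through all degrees, which is what the Koszul argument exploits.) Closing either gap requires an input your sketch never uses --- for instance the Cramer-type identities $\SJ\cdot\ker(\wedge\omega)\subseteq\sum_i\alpha_i\wedge\Lambda^{p-1}\SM$, which show $\SD^p$ is supported on $V(\SJ)$ and allow a localization argument --- and that input, not sign conventions, is why this result is a cited theorem rather than a corollary of the $k=1$ case.
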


Now, let us return to the setting of Lefschetz singularities. We introduce the notation:
\begin{align*}
    \beta_1 :=\dif \zeta_1= \dif x_1\wedge \dif x_3 -\dif x_2\wedge \dif x_4 \quad \text{ and }\quad  \beta_2:= \dif \zeta_2=\dif x_1\wedge \dif x_4+\dif x_2\wedge \dif x_3,
\end{align*}
where $\zeta_i$ were defined in \eqref{eq: dual rot}. We have the following result.
\begin{proposition}\label{proposition: iso division group}
The depth of $\SJ=\SJ(\d f_1,\d f_2)$ is $2$, and there is an isomorphism:
\begin{align*}
    I: \ \ \R\oplus \R[[x_2]] \oplus \R[[x_2,x_4]]^2\ &\diffto \qquad \mathcal{D}^2(\d f_1,\d f_2)\\
    (c ,p,(q_1,q_2))\qquad  &\, \mapsto [c\beta_1 +(px_1+q_1x_3+q_2)\beta_2]
\end{align*}
\end{proposition}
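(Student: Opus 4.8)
The plan is to work throughout with the explicit description $\SD^2(\d f_1,\d f_2) = K/B$, where
\[ K := \{\beta\in\Omega^2_f \mid \beta\wedge\d f_1\wedge\d f_2 = 0\}, \qquad B := \d f_1\wedge\Omega^1_f + \d f_2\wedge\Omega^1_f, \]
and $\SJ=\SJ(\d f_1,\d f_2)$ from \eqref{eq: ideal df12} is the ideal generated by the coefficients of $\d f_1\wedge\d f_2$. I would organise everything with the complex coordinates $z=x_1+ix_2$, $w=x_3+ix_4$: setting $F:=z^2+w^2=f_1+if_2$ one finds $\d f_1\wedge\d f_2=\tfrac{i}{2}\,\d F\wedge\d\bar F$ with $\d F=2(z\,\d z+w\,\d w)$, together with $\beta_1+i\beta_2=\d z\wedge\d w$, $\beta_1-i\beta_2=\d\bar z\wedge\d\bar w$, and $\SJ_\C=\langle z\bar z,z\bar w,w\bar z,w\bar w\rangle=\langle z,w\rangle\langle\bar z,\bar w\rangle$. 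Since $\d z\wedge\d w\wedge\d F=0=\d\bar z\wedge\d\bar w\wedge\d\bar F$, both $\beta_1,\beta_2$ lie in $K$, hence so does $g\beta_i$ for every $g\in\SR$; this shows $I$ is well defined into $\SD^2$.

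For the depth statement I would combine two inputs. Example \ref{ex: reg sequence polynomial} gives a regular sequence of length two inside $\SJ$, so $\operatorname{depth}(\SJ,\SR)\ge 2$. Once $I$ is shown to be an isomorphism the target is nonzero, and Proposition \ref{prop:DepthInequality} then rules out $\operatorname{depth}(\SJ,\SR)\ge 3$ (which would force $\SD^2=0$); hence the depth is exactly $2$. The same value is visible from $\SJ_\C=\langle z,w\rangle\langle\bar z,\bar w\rangle$, whose minimal primes $\langle z,w\rangle$ and $\langle\bar z,\bar w\rangle$ have height $2$.

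Injectivity of $I$ I expect to be the easy half, and cleanest in the complex picture. The projection of a $2$-form onto the $\d z\wedge\d w$-direction sends $a\beta_1+b\beta_2$ to $\tfrac12(a-ib)$, and it maps $B$ into $\langle z,w\rangle$, because in that direction only $\d F\wedge\d z=-2w\,\d z\wedge\d w$ and $\d F\wedge\d w=2z\,\d z\wedge\d w$ contribute. Thus if $c\beta_1+(px_1+q_1x_3+q_2)\beta_2\in B$, writing $g=px_1+q_1x_3+q_2$ we get $c-ig\in\langle z,w\rangle$ and, conjugating, $c+ig\in\langle\bar z,\bar w\rangle$. Adding gives $2c\in\langle z,w,\bar z,\bar w\rangle$, so the constant $c$ vanishes; then $g\in\langle z,w\rangle\cap\langle\bar z,\bar w\rangle=\SJ$, and being a normal form $g=0$, i.e. $p=q_1=q_2=0$. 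The final step here is exactly the transversality of normal forms to $\SJ$ (Corollary \ref{cor: ideal member}).

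The hard part will be surjectivity. Splitting $\Omega^2_f$ into the \emph{pure} directions $\d z\wedge\d w,\ \d\bar z\wedge\d\bar w$ and the four \emph{mixed} directions, the single equation cutting out $K$ involves only the mixed coefficients and is precisely one syzygy among the entries $z\bar z,z\bar w,w\bar z,w\bar w$ of the rank-one matrix $\binom{z}{w}(\bar z\ \ \bar w)$, the pure coefficients being unconstrained. The eight generators of $B$ split accordingly: four lie in the pure directions and reduce the $\d z\wedge\d w$- and $\d\bar z\wedge\d\bar w$-coefficients modulo $\langle z,w\rangle$ and $\langle\bar z,\bar w\rangle$, while the remaining four are the linear syzygies $(z,0,w,0),(0,z,0,w),(\bar z,\bar w,0,0),(0,0,\bar z,\bar w)$ of those entries. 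The crux is to show that these four linear syzygies generate the whole first syzygy module of $z\bar z,z\bar w,w\bar z,w\bar w$, equivalently that the mixed part of $K$ already lies in $B$; I would prove this over $\R$ with the standard bases of Section \ref{sec:prel standardbases}, the intersection identities of Lemma \ref{ex: general 1} being the key input. Granting it, every class is represented by some $a\beta_1+b\beta_2$, and since $\SJ\beta_i\subseteq B$ the coefficient $b$ reduces to its normal form $px_1+q_1x_3+q_2$ modulo $\SJ$ while $a$ reduces to a constant; the bookkeeping is confirmed by matching Hilbert--Poincar\'e series, each graded piece of both sides having dimension $2n+2$ in degree $n\ge 1$. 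This identifies $\SD^2(\d f_1,\d f_2)$ with $\R\oplus\R[[x_2]]\oplus\R[[x_2,x_4]]^2$ via $I$.
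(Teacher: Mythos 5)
Your proposal is correct in outline and takes a genuinely different route from the paper's proof. The paper stays over $\R$: it writes $\omega=\sum_{i<j}\omega_{ij}\d x_i\wedge \d x_j$, normalizes a cocycle modulo $B=\d f_1\wedge\Omega^1_f+\d f_2\wedge\Omega^1_f$ in three explicit steps (Claims 1--3, via hand-chosen $\gamma,\delta$ in \eqref{eq: primitiv 1}, \eqref{eq: primitiv 2}), and then proves exactly your final reduction as its Claim 4: injectivity of $(p,(q_1,q_2))\mapsto[px_1+q_1x_3+q_2]$ into $\SR/\SJ$ via an explicit analysis of the relations among the generators of $\SJ$, and surjectivity by the dimension count $\dim \SR_d/\SJ_d=2(d+1)$ --- the same Hilbert--Poincar\'e bookkeeping you invoke. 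Your complexification makes the structure far more transparent: $\SJ_\C=\langle z,w\rangle\langle \bar z,\bar w\rangle$ is a monomial ideal, $K$ splits into pure and mixed parts, your identification of the eight generators of $B_\C$ is correct, and your injectivity argument via the $\d z\wedge\d w$-projection is a genuine simplification of Claim 4 (like the paper's Lemma \ref{ex: general 1}, it rests on $G$ being a reduced standard basis so that Corollary \ref{cor: ideal member} applies to the reduced element $px_1+q_1x_3+q_2$). Your depth argument is not circular, since you supply the independent height/Cohen--Macaulay justification; alternatively the paper's degree argument ($\beta_i$ has constant coefficients, $\d f_i$ linear ones, so $[\beta_i]\neq 0$) works without knowing the full isomorphism. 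What the paper's pedestrian route buys is its byproducts: the explicit reduction data \eqref{eq: first claim division}, \eqref{eq: third claim division}, \eqref{eq: last case division} and the partition \eqref{eq: partition of sj} are precisely what the proof of Corollary \ref{lemma: relation second cohomology} consumes later, so on your route those relations would still have to be derived (though they, too, become easier in the $(z,w)$-picture).

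The one weak point is your plan for the step you yourself call the crux: that the four linear syzygies generate the whole syzygy module of $z\bar z,z\bar w,w\bar z,w\bar w$, equivalently that the mixed part of $K$ lies in $B_\C$. You propose to prove this back over $\R$ using standard bases and Lemma \ref{ex: general 1}; but that lemma computes intersections such as $\SJ\cap\R[[f_1,f_2]]$ and says nothing about syzygies, so as stated this is a gap. It is, however, easily closed inside your own framework: the four generators are \emph{monomials}, so their syzygy module is generated by the pairwise divided-Koszul syzygies $\tfrac{\operatorname{lcm}(m_i,m_j)}{m_i}e_i-\tfrac{\operatorname{lcm}(m_i,m_j)}{m_j}e_j$ (Taylor complex, or Buchberger for monomial ideals). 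Ordering $m_1=z\bar z$, $m_2=z\bar w$, $m_3=w\bar z$, $m_4=w\bar w$, four of these six correspond, after the sign/permutation bookkeeping coming from the signs in the wedge equation, exactly to your four linear syzygies, and the two quadratic ones reduce to them:
\begin{align*}
w\bar w\,e_1-z\bar z\,e_4=\bar w\,(w e_1-z e_3)+z\,(\bar w e_3-\bar z e_4),\qquad
w\bar z\,e_2-z\bar w\,e_3=\bar z\,(w e_2-z e_4)-z\,(\bar w e_3-\bar z e_4).
\end{align*}
Finally, to descend to $\R$, note that the mixed part of a real cocycle is itself real (the mixed subspace is conjugation-stable) and lies in $B_\C\cap\Omega^2_f=B$, since $B_\C=B\otimes_\R\C$. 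With these two observations supplied, your argument is complete.
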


\subsection{Consequence of Proposition \ref{proposition: iso division group}}\label{section: mistake explanation}
In this subsection we explain the problem of the claimed computation for the formal Poisson cohomology for Lefschetz singularities in \cite{BV20}.

Let us first recall the definition of an isolated complete intersection singularity. Set $\SR=\R[[x_1,\dots ,x_n]]$ and consider polynomials $p_1,\dots,p_k\in \R[x_1,\dots,x_n]$ with zero constant term. Define $\SI=\SI(p_1,\dots, p_k)$ to be the ideal generated by
\begin{align*}
    \langle p_1,\dots,p_k, \det (\partial_{j_i} p_i)_{1\le i\le k}\rangle.
\end{align*}
\begin{definition}
$(p_1,\dots,p_k)$ has an isolated complete intersection singularity (ICIS) if $(p_1,\dots,p_k)$ is a regular sequence in $\SR$ and $\SR/\SI$ is a finite dimensional $\R$-vector space.
\end{definition}
Let us define the ideal $\SJ=\SJ(p_1,\dots, p_k)$ by its generators
\begin{align*}
    \SJ:=\langle \det(\partial_{j_i} p_i)_{1\le i\le k}\rangle_{\SR}.
\end{align*}
The following is due to \cite{Loo84}[Proof of Proposition (4.4)]:
\begin{proposition}\label{prop: icis depth}
If $(p_1,\dots,p_k)$ is an ICIS, then the depth of the ideal $\SJ$ in $\SR$ is $k+1$. 
\end{proposition}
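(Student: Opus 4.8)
The strategy is to convert the homological quantity $\operatorname{depth}(\SJ,\SR)$ into the codimension of the critical locus of $p=(p_1,\dots,p_k)$ and then compute that codimension, following Looijenga. First I would use that $\SR=\R[[x_1,\dots,x_n]]$ is a regular, hence Cohen--Macaulay, local ring: in a Cohen--Macaulay local ring the depth (equivalently the grade) of a proper ideal equals its height, so
\[ \operatorname{depth}(\SJ,\SR)=\operatorname{ht}(\SJ)=n-\dim(\SR/\SJ). \]
All the quantities involved --- regularity of the sequence $(p_1,\dots,p_k)$, finite dimensionality of $\SR/\SI$, and the height of $\SJ$ --- are insensitive to the faithfully flat base change $\R[[x_1,\dots,x_n]]\hookrightarrow\C[[x_1,\dots,x_n]]$, so I would work over $\C$ and identify $\dim(\SR/\SJ)$ with the dimension of the critical germ $C=V(\SJ)$, i.e.\ the locus where the Jacobian matrix $(\partial_j p_i)$ has rank $<k$. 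The whole problem thus reduces to showing $\dim C=k-1$.

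For the inequality $\dim C\ge k-1$ I would appeal to the classical height bound for ideals of maximal minors: since $\SJ$ is generated by the $k\times k$ minors of the $k\times n$ Jacobian matrix and is a proper ideal (we are dealing with a genuine singularity, so $0\in C$), one has $\operatorname{ht}(\SJ)\le n-k+1$, i.e.\ $\dim C\ge k-1$. This is the routine half.

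The main obstacle is the reverse inequality $\dim C\le k-1$, which is exactly the content of Looijenga's Proposition (4.4) and where the ICIS hypothesis is decisive. Finiteness of $\SR/\SI$ says that $V(\SI)=C\cap p^{-1}(0)$ is the reduced point $\{0\}$, so the restriction $p|_C\colon (C,0)\to(\C^k,0)$ has finite special fibre. By the finite mapping theorem for germs of analytic maps, $p|_C$ is then a finite morphism, whence $\dim C=\dim p(C)=\dim\Delta$, where $\Delta=p(C)$ is the discriminant germ. Finally, $p$ is a submersion on the complement of $C$, so by generic smoothness over $\C$ its regular values are dense; hence $\Delta$ is a proper subgerm of $(\C^k,0)$ and $\dim\Delta\le k-1$. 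Combining the two bounds yields $\dim C=k-1$ and therefore
\[ \operatorname{depth}(\SJ,\SR)=\operatorname{ht}(\SJ)=n-k+1, \]
which is the asserted value $k+1$ in the balanced case $n=2k$ at hand (e.g.\ $n=4$, $k=2$ for the Lefschetz map). The points requiring care are that $\SJ$ be a proper ideal for the determinantal bound to apply, and the precise invocation of the finite mapping theorem together with properness of $\Delta$ --- both of which rely essentially on $(p_1,\dots,p_k)$ being a regular sequence and on $\SR/\SI$ being Artinian.
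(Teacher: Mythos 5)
Your proof is correct, and it is essentially the paper's own proof: the paper gives no argument for this proposition, citing instead the proof of Proposition (4.4) in \cite{Loo84}, and the chain you wrote out --- depth equals height in the Cohen--Macaulay local ring $\SR$, the determinantal (Eagon--Northcott) bound $\operatorname{ht}(\SJ)\le n-k+1$ in one direction, and finiteness of $p|_C$ (from $\dim_{\R}\SR/\SI<\infty$) plus Sard in the other --- is precisely Looijenga's argument, transplanted to $\C[[x_1,\dots,x_n]]$ by the finite flat base change you describe.

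One point deserves emphasis, and it concerns the paper's statement rather than your argument. What you actually prove is $\operatorname{depth}(\SJ,\SR)=n-k+1$, and, as you note, this coincides with the asserted value $k+1$ only when $n=2k$. The proposition as stated, for arbitrary $n$ and $k$, is false: take $n=3$, $k=1$, $p_1=x_1^2+x_2^2+x_3^2$; this is an ICIS with $\SJ=\langle x_1,x_2,x_3\rangle$, whose depth is $3$, not $k+1=2$. The paper only invokes the proposition with $n=4$, $k=2$ (to conclude that $(f_1,f_2)$ is not an ICIS because $\operatorname{depth}(\SJ(\d f_1,\d f_2),\SR)=2<3$), and there $n-k+1=k+1=3$, so nothing downstream is affected; but the correct general statement, and the one your proof establishes, is $\operatorname{depth}(\SJ,\SR)=n-k+1$. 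Your two flagged caveats --- that $\SJ$ must be a proper ideal (i.e.\ $0$ is genuinely a critical point), and the passage from formal power series to analytic germs needed for the finite mapping theorem and Sard --- are the right ones; the latter is unproblematic here because the $p_i$ are polynomials, so $\dim \SR/\SJ$ agrees with the dimension of the analytic local ring of the critical germ.
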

For the computations in \cite{BV20} it is assumed that $(f_1,f_2)$ is an ICIS (p. 16, last paragraph). This assumption is crucial for the computations as done in \cite{Pel09}. However, combining \ref{proposition: iso division group} and Proposition \ref{prop: icis depth} implies that $(f_1,f_2)$ is not an ICIS. In fact, as we shall see, the non-vanishing of $\mathcal{D}^2$ seems to be main factor for some of the cohomology groups being non-free modules over $H^0$.

\subsection{Proposition \ref{proposition: iso division group}: The proof and a Corollary}

We start with the proof of Proposition \ref{proposition: iso division group}. To see that $\operatorname{depth}(\SJ,\SR)= 2$ observe that
\begin{equation*}
    \beta_i \wedge \d f_1 \wedge \d f_2 = 0,\quad i =1,2.
\end{equation*}
Furthermore, the coefficients of $\beta_i$ are homogenenous of degree $0$, whereas the coefficients of $\d f_i$ are homogeneous of degree $1$. This implies that $\beta_1$ and $\beta_2$ define a non-zero classes in $\SD^2(\d f_1,\d f_2)$. Hence, by Proposition \ref{prop:DepthInequality} we have $\operatorname{depth}(\SJ,\SR) \le 2$. Hence Example \ref{ex: reg sequence polynomial} implies that $\operatorname{depth}(\SJ,\SR) = 2$.

We note first that the map is well-defined. Let $\omega \in \Omega^2(\SR)$ be of the form
\begin{align*}
    \omega=\sum_{i<j}\omega_{ij}\dif x_i\wedge \dif x_j \qquad \text{ with } \omega_{ij}\in \SR.
\end{align*}
The wedge product of $\omega$ with $\dif f_1\wedge \dif f_2$ is described by the function
\begin{align}\label{eq: omega wedge}
    \star^{-1}(\frac{1}{4}\omega\wedge \dif f_1\wedge \dif f_2)=& \quad \ (x_1^2+x_2^2)\omega_{34} - (x_1x_4-x_2x_3)(\omega_{13}+\omega_{24})\\
    &\, +(x_3^2+x_4^2)\omega_{12} + (x_1x_3+x_2x_4)(\omega_{23}-\omega_{14}). \nonumber
\end{align}
We note that for $\gamma \in \Omega^1(\SR)$ of the form
\begin{align*}
    \gamma = \sum_{i}\gamma_i \dif x_i
\end{align*}
we obtain that
\begin{align}
    \frac{1}{2}\dif f_1\wedge \gamma = & \ \ \ \ (x_1\gamma_2+x_2\gamma_1)\dif x_1\wedge \dif x_2 +(x_3\gamma_4+x_4\gamma_3)\dif x_3\wedge \dif x_4 \nonumber\\
    &+(x_1\gamma_3-x_3\gamma_1)\dif x_1\wedge \dif x_3+(x_4\gamma_2-x_2\gamma_4)\dif x_2\wedge \dif x_4\label{eq: primitiv 1}\\
    &+(x_1\gamma_4+x_4\gamma_1)\dif x_1\wedge \dif x_4-(x_2\gamma_3+x_3\gamma_2)\dif x_2\wedge \dif x_3.\nonumber
\end{align}
Similarly we have for $\delta\in\Omega^1(\SR)$ that 
\begin{align}
    \frac{1}{2}\dif f_2\wedge \delta = & \ \ \ \ (x_2\delta_2-x_1\delta_1)\dif x_1\wedge \dif x_2 +(x_4\delta_4-x_3\delta_3)\dif x_3\wedge \dif x_4 \nonumber\\
    &+(x_2\delta_3-x_4\delta_1)\dif x_1\wedge \dif x_3+(x_1\delta_4-x_3\delta_2)\dif x_2\wedge \dif x_4\label{eq: primitiv 2}\\
    &+(x_2\delta_ 4-x_3\delta_1)\dif x_1\wedge \dif x_4+(x_1\delta_3-x_4\delta_2)\dif x_2\wedge \dif x_3.\nonumber
\end{align}
We want to show that for any $\omega\in Z^2$ we can find $\gamma$ and $\delta$ such that
\begin{align}\label{eq: omega eq class}
    \bar{\omega}=\omega+\dif f_1\wedge \gamma+\dif f_2\wedge \delta
\end{align}
is of the form as described in Proposition \ref{proposition: iso division group}.

Let us assume that $\omega\in Z^2$ or equivalently that
\begin{align*}
    \star^{-1}(\omega\wedge \dif f_1\wedge \dif f_2) = 0.
\end{align*}
\textbf{Idea:} We show in steps how we can simplify (parts) of the coefficients of $\omega$ as follows:
\begin{enumerate}
    \item We make choices for (parts) of the coefficients of $\gamma_i$ in \eqref{eq: primitiv 1} and $\delta_j $ in \eqref{eq: primitiv 2} to show that these simplifications can be achieved.
    \item We continue in the next step with a choice for $\gamma_i$ and $\delta_j$ based upon the simplification for $\omega$ which we verified in the previous steps. 
\end{enumerate}

\textbf{Notation:} To keep notation simple:
\begin{itemize}
    \item We write $I\in \N_0^4$ with a subscript to indicate which entries vary over non-zero entries, e.g. $I_{24}$ stands for $(0,i_2,0,i_4)$ with $i_2,i_4\in \N_0$. We simply write $I$ for $I_{1234}$;
    \item For $r\in \SR$ the expression $r(I)$ refers to the coefficient of the term $x_1^{i_1}x_2^{i_2}x_3^{i_3}x_4^{i_4}$ for $r$; 
    \item If $J=(j_1,j_2,j_3,j_4)$ happens to be such that at least one of the $j_i$ is negative we set $r(J):=0$. 
\end{itemize}

We note first that the coefficients $\omega_{12}$ and $\omega_{34}$ satisfy:
\begin{align*}
    \omega_{12}(I_{34})=0 \qquad \text{ and }\qquad \omega_{34}(I_{12})=0.
\end{align*}
This follows immediately from \eqref{eq: omega wedge} since the only non-zero elements in the subrings $\R[[x_1,x_2]]$ and $\R[[x_3,x_4]]$ can come from the corresponding parts of $\omega_{34}$ and $\omega_{12}$, respectively.

\textbf{Claim 1:} We may assume that the coefficients $\omega_{12}$ and $\omega_{34}$ satisfy:
\begin{align}\label{eq: simp1}
    \omega_{12}=\omega_{34}=0.
\end{align}

We can choose
\begin{align}
    \delta_1(I):=&\, \frac{1}{2}\omega_{12}(I+e_1)+\gamma_2(I)+(\gamma_1+\delta_2)(I+e_1-e_2),\nonumber\\
    \delta_2(I_{234}):=&\, -\frac{1}{2}\omega_{12}(I_{234}+e_2)-\gamma_1(I_{234}),\qquad \delta_4(I_{124}):=-\frac{1}{2}\omega_{34}(I_{124}+e_4)-\gamma_3(I_{124})\label{eq: first claim division}\\
    \delta_3(I):=&\, \frac{1}{2}\omega_{34}(I+e_3)+\gamma_4(I)+(\gamma_3+\delta_4)(I+e_3-e_4).\nonumber
\end{align}
By comparing the coefficients in \eqref{eq: primitiv 1}, \eqref{eq: primitiv 2} and \eqref{eq: omega eq class} 
this implies the claim.

\textbf{Claim 2:} We may assume that
\begin{align*}
    \omega_{13}+\omega_{24}=0=\omega_{23}-\omega_{14} \quad \text{ i.e.\ }\quad r=0.
\end{align*}

By Example \ref{ex: reg sequence polynomial} the sequence $\{x_1x_3+x_2x_4, x_1x_4-x_2x_3\} \subset\SR$ is regular. Since $\SR$ is a unique factorization domain, the vanishing of \eqref{eq: omega wedge} together with \eqref{eq: simp1} implies the existence of $r\in \SR$ such that
\begin{align*}
    \omega_{13}+\omega_{24}=(x_1x_3+x_2x_4)r \quad \text{ and }\quad \omega_{23}-\omega_{14}=(x_1x_4-x_2x_3)r.
\end{align*}
We choose 
\begin{align*}
    \gamma_1(I+e_1):= (\gamma_3+\delta_4)(I+e_3)-\delta_2(I+e_1)-\frac{1}{2}r.
\end{align*}
Summing up the $\dif x_1\wedge \dif x_3$ and the $\dif x_2\wedge\dif x_4$-components of \eqref{eq: primitiv 1} and \eqref{eq: primitiv 2} yields
\begin{align*}
    (x_1x_3+x_2x_4)\big((\gamma_3+\delta_4)(I+e_3)-(\gamma_1+\delta_2)(I+e_1)\big).
\end{align*}
Similarly, we obtain for the $\dif x_1\wedge \dif x_4$-components minus the $\dif x_2\wedge\dif x_3$-components that
\begin{align*}
    -(x_1x_4-x_2x_3)\big((\gamma_3+\delta_4)(I+e_3)-(\gamma_1+\delta_2)(I+e_1)\big)
\end{align*}
implying the claim.

\textbf{Claim 3:} We may assume that
\begin{align*}
    \omega_{13}\in \R.
\end{align*}

Note that the sum of the $\dif x_1 \wedge \dif x_3$-components from \eqref{eq: primitiv 1} and \eqref{eq: primitiv 2} is given by
\begin{align*}
    x_1\gamma_3-x_3\gamma_1+x_2\delta_3-x_4\delta_1=x_1\gamma_3(I_{124})-x_3\gamma_1(I_{234})+x_2\gamma_4-x_4\gamma_2+x_1x_3(\delta_2(I+e_1)-\delta_4(I+e_3)).
\end{align*}
Hence by taking 
\begin{align}
    \gamma_2(I_4):=&\, \frac{1}{2}\omega_{13}(I_4+e_4), \qquad  \gamma_4(I_{24}):=-\frac{1}{2}\omega_{13}(I_{24}+e_2)+\gamma_2(I_{24}+e_2-e_4),\nonumber\\
    \gamma_3(I_{124}):=&\, -\frac{1}{2}\omega_{13}(I_{124}+e_1)-\gamma_4(I_{124}+e_1-e_2)+\gamma_2(I_{124}+e_1-e_4),\label{eq: third claim division}
    \\
    \gamma_1(I_{234}) :=&\,  \frac{1}{2} \omega_{13}(I_{234}+e_3)+\gamma_4(I_{234}+e_3-e_2)-\gamma_2(I_{234}+e_3-e_4),\nonumber\\
    \delta_4(I+e_3):=&\, \frac{1}{2}\omega_{13}(I+e_1+e_3)+\delta_2(I+e_1)+\gamma_4(I+(1,-1,1,0))-\gamma_2(I+(1,0,1,-1))\nonumber
\end{align}
we obtain the claim.

To simplify $\omega_{14}$ we add the $\dif x_1\wedge \dif x_4$-components of \eqref{eq: primitiv 1} and \eqref{eq: primitiv 2}, which yields
\begin{equation}\label{eq: last case division}
    x_1\gamma_4+x_4\gamma_1+x_2\delta_4-x_3\delta_1=\begin{array}{l}
    (x_1^2+x_2^2)\gamma_4(I_{124}+e_1)+(x_1x_3+x_2x_4)\gamma_4(I+e_3)\\
     -  (x_3^2+x_4^2)\gamma_2(I+e_3)+(x_1x_4-x_2x_3)(\gamma_3(I+e_3)+\gamma_2(I_{24}+e_2)).
    \end{array}
\end{equation}
Note that all the different coefficients are independent of each other. Moreover, we have
\begin{align}\label{eq: partition of sj}
    \SJ=  \SR\cdot(x_3^2+x_4^2) + \SR\cdot (x_1x_3+x_2x_4)+\SR \cdot (x_1x_4-x_2x_3)+\R[[x_1,x_2,x_4]]\cdot (x_1^2+x_2^2).
\end{align} 
Hence the result follows from:

\textbf{Claim 4:} The following map is an isomorphism of $\R$-vector spaces: 
\begin{equation*}
    \begin{array}{rccc}
         m:&  \R[[x_2]]\oplus \R[[x_2,x_4]]^2  &\diffto & \SR / \SJ \\
    &(p,(q_1,q_2)) &\mapsto    & [px_1+q_1x_3+q_2].
    \end{array}
\end{equation*}

We first show that the map $m$ is injective. In order to do this we use that the linear map:
\begin{align*}
     l: \R[[x_1,x_2]]\oplus\R[[x_3,x_4]] \oplus \SR^2  &\to \qquad \qquad \qquad \qquad \qquad \qquad \qquad \SJ \\
    (p_1,p_2,(r_1,r_2))\ \qquad &\mapsto    \, (x_1^2+x_2^2) p_1 +(x_3^2+x_4^2) p_2 + (x_1x_3+x_2x_4)r_1+(x_1x_4-x_2x_3) r_2 
\end{align*}
is surjective with kernel given by
\[ (0,0,((x_1x_4-x_2x_3) \cdot r,-(x_1x_3+x_2x_4)\cdot r)) \qquad \text{ for } r\in \SR. \]
This can be seen from Example \ref{ex: reg sequence polynomial} and the relations
\begin{align*}
    x_3(x_1^2+x_2^2) -x_1(x_1x_3+x_2x_4)+x_2(x_1x_4-x_2x_3)=0,&\\ x_4(x_1^2+x_2^2) -x_2(x_1x_3+x_2x_4)-x_1(x_1x_4-x_2x_3)=0,&\\
    x_1(x_3^2+x_4^2) -x_3(x_1x_3+x_2x_4)-x_4(x_1x_4-x_2x_3)=0,&\\
    x_2(x_3^2+x_4^2) -x_4(x_1x_3+x_2x_4)+x_3(x_1x_4-x_2x_3)=0.&
\end{align*}
Let $(p,(q_1,q_2))\in \R[[x_2]]\oplus \R[[x_2,x_4]]^2$ such that $m(p,(q_1,q_2))=0$, i.e.
\[\exists (p_1,p_2,(r_1,r_2))\in \R[[x_1,x_2]]\oplus\R[[x_3,x_4]]  \oplus \SR^2  : \quad l(p_1,p_2,(r_1,r_2))= px_1+q_1x_3+q_2. \]
Comparing the terms in $\R[[x_1,x_2]]$ and $\R[[x_3,x_4]]$ we immediately obtain
\[ p= 0 \qquad q_1(I_4)=0, \qquad q_2(I_2)=0 \qquad q_2(I_4)=0\quad \text{ and }\quad p_1=p_2=0.\]
Hence we can write the image of $m$ uniquely as 
\[ x_2x_3 \tilde{q}_1+ x_2x_4\tilde{q}_2 \qquad \text{ for some } \tilde{q}_1,\tilde{q}_2\in \R[[x_2,x_4]]. \]
But such an element is in the image of $l$ iff it is zero, hence $m$ is indeed injective.

We conclude the proof by comparing dimensions of the domain and codomain of $m$ for a fixed homogeneous degree in $\SR/\SJ$. We denote by $\SR_d$ and $\SJ_d$ the corresponding subvector spaces generated by polynomials of homogeneous degree $d\in \N_0$. Using the fact that for $n\in N$ and $d\in \N_0$ we have
\begin{align*}
    \dim \R[[x_1,\dots ,x_n ]]_d = \binom{n-1+d}{n-1},
\end{align*}
and the description of $\SJ$ via $l$ we obtain for the dimension of $\SJ_d$ for $2\le d$ that
\begin{align*}
    \text{ for } 2\le d<4:& \quad \dim \SJ_d =2\binom{d-1}{1}+2\binom{d+1}{3},\\
    \text{ and else :}& \quad  \dim \SJ_d =2\binom{d-1}{1}+2\binom{d+1}{3} - \binom{d-1}{3}.
\end{align*} 
Finally, a direct computation implies that for any $d\in \N$ we have
\[ \dim \SR_d/\SJ_d=\dim \SR_d - \dim \SJ_d= 2(d+1),\]
which proves the claim by comparing it with $\dim (m^{-1}(\{\SR_d/\SJ_d\}))$. Hence we proved Proposition \ref{proposition: iso division group}.
\begin{corollary}\label{lemma: relation second cohomology}
The isomorphism from Proposition \ref{proposition: iso division group} satisfies the relations:
\begin{align*}
I^{-1}( [f_1\cdot I(0,p,(q_1,q_2))])&\, = -2(0,x_2^2p,(x_2x_4p+(x_2^2+x_4^2)q_1,(x_2^2+x_4^2)q_2))\\   
I^{-1}( [f_2\cdot I(0,p,(q_1,q_2))])&\, =  2(0,q_2(I_2)x^{I_2+e_2}, (x_4q_2+x_2^2x_4^{-1}(q_2-q_2(I_2)x^{I_2}),-(x_2p+x_4q_1)(x_2^2+x_4^2)))
\end{align*}
for any $(p ,(q_1,q_2))\in \R[[x_2]]\oplus \R[[x_2,x_4]]^2$. Additionally, we have
\begin{align*}
    [f_1\beta_1-f_2\beta_2]&\, =0=[f_2\beta_1+f_1\beta_2].
\end{align*} 
In particular, for any $(p ,(q_1,q_2))\in \R[[x_2]]\oplus \R[[x_2,x_4]]^2$ which satisfy
\[ px_2 +x_4q_1 = a (x_2^2+x_4^2)^m \qquad \text{ and } \qquad q_2= b (x_2^2+x_4^2)^m\]
    for some $a,b\in \R$, we obtain that 
    \[
        [(x_1p + x_3q_1)\beta_2] = [c_1f_1^{m-1}f_2\beta_2]\quad \text{and} \quad [q_2\beta_2] = [c_2f_1^m\beta_2],
    \]
    for $c_1 := (-2)^{-m} a$ and $c_2:= 2^{-m}b$.
\end{corollary}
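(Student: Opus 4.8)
The plan is to establish Corollary \ref{lemma: relation second cohomology} in two stages: first derive the two explicit formulas for $I^{-1}([f_j \cdot I(0,p,(q_1,q_2))])$ by direct computation using the structure of the isomorphism $I$, and then deduce the remaining relations and the final specialized statement as consequences. The starting observation is that multiplication by $f_1$ and $f_2$ acts on representatives $c\beta_1 + (px_1 + q_1x_3 + q_2)\beta_2$ in $\mathcal{D}^2(\d f_1,\d f_2)$, and the whole point is to re-express such a product, modulo the relations $\d f_1 \wedge \gamma + \d f_2 \wedge \delta$ that define the quotient, back in the normal form produced by $I$.

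First I would compute $f_1 \beta_2$ and $f_2 \beta_2$ (and $f_j\beta_1$) explicitly. Since $\beta_2 = \d x_1 \wedge \d x_4 + \d x_2 \wedge \d x_3$ and $f_1 = x_1^2 - x_2^2 + x_3^2 - x_4^2$, multiplying a coefficient $(px_1 + q_1 x_3 + q_2)$ by $f_1$ or $f_2$ produces an element of $\Lambda^2\SR$ whose class in $\mathcal{D}^2$ must be reduced to normal form. The key tool is Claim 4 from the proof of Proposition \ref{proposition: iso division group}, namely the isomorphism $m:\R[[x_2]]\oplus\R[[x_2,x_4]]^2 \diffto \SR/\SJ$ sending $(p,(q_1,q_2)) \mapsto [px_1 + q_1x_3 + q_2]$, together with the partition \eqref{eq: partition of sj} of $\SJ$ and the explicit primitives \eqref{eq: primitiv 1}, \eqref{eq: primitiv 2}. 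Concretely, I would multiply out $f_1(px_1 + q_1x_3 + q_2)$, then use the normal-form reduction governing $m$ (exchanging $x_1^2$ for $-x_2^2$, $x_3^2$ for $-x_4^2$, and $x_1x_4$ for $x_2x_3$ modulo $\SJ$) to read off the image under $m^{-1}$, which yields the stated triple $-2(0, x_2^2 p, (x_2x_4 p + (x_2^2+x_4^2)q_1, (x_2^2+x_4^2)q_2))$; the $f_2$ case is analogous but requires care with the $x_2^2 x_4^{-1}$ term, which arises precisely because the normal form of $x_2^2 q_2$ in the $x_3$-slot forces a division compensated by the $q_2(I_2)$ correction term.

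Next I would establish the two relations $[f_1\beta_1 - f_2\beta_2] = 0 = [f_2\beta_1 + f_1\beta_2]$. These should follow either from the complex-structure packaging (recall $\zeta_1,\zeta_2$ come from real and imaginary parts of a holomorphic object, so $f_1\beta_1 - f_2\beta_2$ and $f_2\beta_1 + f_1\beta_2$ are natural combinations) or more directly by exhibiting $\gamma,\delta \in \Omega^1(\SR)$ with $f_1\beta_1 - f_2\beta_2 = \d f_1 \wedge \gamma + \d f_2 \wedge \delta$, which is a finite linear-algebra verification using \eqref{eq: primitiv 1} and \eqref{eq: primitiv 2}. Finally, for the specialized statement, I would substitute the hypotheses $px_2 + x_4 q_1 = a(x_2^2+x_4^2)^m$ and $q_2 = b(x_2^2+x_4^2)^m$ into the $f_1$-formula and iterate: applying the $f_1$ relation $m$ times (respectively combining with the $f_j\beta_i$ relations) collapses the power $(x_2^2+x_4^2)^m$ into a factor $f_1^m$ or $f_1^{m-1}f_2$ up to the sign-and-scale constants $(-2)^{-m}$ and $2^{-m}$, giving $c_1 = (-2)^{-m}a$ and $c_2 = 2^{-m}b$.

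The main obstacle I anticipate is the bookkeeping in the $f_2$ formula, specifically the appearance of the term $x_2^2 x_4^{-1}(q_2 - q_2(I_2)x^{I_2})$: this division by $x_4$ is only well-defined after subtracting the part of $q_2$ that is pure in $x_2$, and verifying that the result genuinely lands in $\R[[x_2,x_4]]$ (rather than introducing poles) requires tracking the normal-form reduction monomial-by-monomial, exactly as in the proof of Lemma \ref{ex: general 1}. Getting the constants and the interplay between the $f_1\beta_1 = f_2\beta_2$ type relations right in the final iteration — so that a mixed expression like $(x_1p + x_3q_1)\beta_2$ correctly becomes $f_1^{m-1}f_2\beta_2$ rather than, say, $f_1^m\beta_1$ — is where sign errors are most likely, and I would double-check it against the degree-$0$ and degree-$1$ cases before asserting the general pattern.
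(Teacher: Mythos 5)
Your plan is correct and is essentially the paper's own proof: the paper likewise obtains the two multiplication formulas by a direct computation that reduces the $\beta_2$-coefficient modulo $\SJ$ to the normal form of Claim~4 using the partition \eqref{eq: partition of sj} (equivalently, the substitutions $x_1^2\equiv -x_2^2$, $x_3^2\equiv-x_4^2$, $x_1x_4\equiv x_2x_3$, $x_1x_3\equiv -x_2x_4$), proves $[f_1\beta_1-f_2\beta_2]=0=[f_2\beta_1+f_1\beta_2]$ by exhibiting explicit $\gamma,\delta$ as in \eqref{eq: third claim division} and \eqref{eq: first claim division} (your complex identity $(f_1+if_2)(\beta_1+i\beta_2)=\d(f_1+if_2)\wedge(\zeta_1+i\zeta_2)$ gives the same two equations at once), and deduces the final claim by iterating the $f_1$-formula. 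One caveat worth recording: carrying out your iteration honestly gives $c_1=-(-2)^{-m}a$ and $c_2=(-2)^{-m}b$ rather than the printed constants (e.g.\ for $m=1$, $p=ax_2$, $q_1=ax_4$ one has $x_1p+x_3q_1=\tfrac{a}{2}f_2$, so $c_1=\tfrac{a}{2}$, not $-\tfrac{a}{2}$); the constants in the statement are sign typos inconsistent with the corollary's own first relation, which is harmless downstream since only the existence of such constants is used, but your proposal cannot reproduce them verbatim.
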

\begin{proof}
The first two equations follow by a direct computation using \eqref{eq: last case division} and its induced partition for $\SJ$ as stated in \eqref{eq: partition of sj}. The last two equalities are obtained from a direct computation and choices for $\gamma$ and $\delta$ as described in \eqref{eq: third claim division} and \eqref{eq: first claim division}.
\end{proof}

\section{Kernel of the Poisson differential}\label{sec: ker poisson diff}

The aim of this section is to compute the kernel of the Poisson differential $\delta_\pi:\Omega^\bullet_f \to \Omega^{\bullet -1}_f$. We prove the following:
\begin{proposition}\label{proposition: kernel differential}
    For the kernel of the differential $\delta_{\pi}$ from Proposition \ref{proposition: Poisson differential} we obtain the following results for formal differential forms. The statements are for $g,g_i\in \SR$ and $p,p_i,q_i\in \R[[f_1,f_2]]$.
    \begin{itemize}
        \item For $\alpha\in \Omega^1 _f$ we obtain that
        \begin{equation}\label{eq: ker degree 1}
            \alpha\in \ker \delta_{\pi} \quad \Leftrightarrow \quad  \alpha  =\dif g_0+\sum_{i=1}^2p_i\zeta_i +g_i\dif f_i.
        \end{equation}
        \item In degree $2$ we get for $\omega\in \Omega^2_f$ that
        \begin{align}\label{eq: ker degree 2}
            \omega\in \ker \delta_{\pi} \quad \Leftrightarrow \quad \omega= p\zeta_1\wedge\zeta_2+g\dif f_1\wedge\dif f_2 +\sum_{i=1}^2 \dif f_1\wedge p_i \zeta_i +\dif (q_{i}\zeta_i) +\dif g_i\wedge \dif f_i.
        \end{align}
        \item For a formal $3$-form $\gamma\in \Omega^3_f$ we obtain
        \begin{align}\label{eq: ker degree 3}
            \gamma \in \ker \delta_{\pi} \quad \Leftrightarrow \quad \gamma=&\  \dif f_1\wedge\dif f_2\wedge \dif g + \sum_{i=1}^2 q_i\epsilon_i +\dif f_1\wedge \dif  \big(p_i\zeta_i\big) 
        \end{align}
        where $\epsilon_i = \star E_i$.
        \item In the top degree we have for $g\in \SR$ that
        \begin{align}\label{eq: ker deg 4}
            g\mu\in \ker \delta_{\pi} \quad \Leftrightarrow \quad g\in \R[[f_1,f_2]].
        \end{align}
    \end{itemize}
\end{proposition}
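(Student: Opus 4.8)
The plan is to compute $\ker \delta_\pi$ in each degree by working with the explicit formulas for $\delta_\pi$ from Proposition \ref{proposition: Poisson differential}, reducing in each case to a pure division/ideal-membership problem that the earlier algebraic results (the division group computation in Proposition \ref{proposition: iso division group}, its Corollary \ref{lemma: relation second cohomology}, and the standard-basis lemmas \ref{ex: general 1} and \ref{ex: general}) are designed to solve. In every degree one direction is routine: checking that the listed generators are annihilated by $\delta_\pi$ is a direct computation using $\star(\pi) = \d f_1 \wedge \d f_2$, the relations $\delta_\pi = \star \circ (\d_\pi + X_\mu \wedge \cdot)\circ \star^{-1}$ with $X_\mu = 0$, and the fact that $f_1,f_2$ are Casimirs (so $\d f_i$ and $\zeta_i$-type expressions behave predictably). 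The substance is the reverse inclusion.

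I would organize the proof by ascending complexity rather than ascending degree, since the top and bottom degrees are easiest. In the top degree, $\delta_\pi(g\mu) = \d g \wedge \d f_1 \wedge \d f_2$, so $g\mu \in \ker\delta_\pi$ iff $\d g \wedge \d f_1 \wedge \d f_2 = 0$; this is exactly a first division group statement $\SD^1(\d f_1, \d f_2)$, which vanishes because $\operatorname{depth}(\SJ,\SR)=2>1$ by Proposition \ref{proposition: iso division group} and Proposition \ref{prop:DepthInequality}, forcing $\d g = \alpha_1 \d f_1 + \alpha_2 \d f_2$ and hence $g \in \R[[f_1,f_2]]$. For degree $1$, using $\delta_\pi(\alpha) = \iota_\pi(\d\alpha)$, the condition $\iota_\pi(\d\alpha)=0$ translates via the isomorphism $\star$ into $\d\alpha \wedge \d f_1 \wedge \d f_2 = 0$, i.e. $\d\alpha \in \SD^2(\d f_1,\d f_2)$; I then feed this into Proposition \ref{proposition: iso division group} to extract the $p_i\zeta_i$ and $g_i \d f_i$ pieces, with the closed remainder contributing the exact term $\d g_0$ after invoking the formal Poincar\'e lemma.

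Degrees $2$ and $3$ are the heart of the matter and where the nonvanishing of $\SD^2(\d f_1,\d f_2)$ actively intervenes. For a $2$-form $\omega$, the formula $\delta_\pi(\omega) = \iota_{\star^{-1}\d\omega}(\d f_1\wedge \d f_2) - \d\iota_\pi(\omega)$ mixes a de Rham derivative with a contraction, so the kernel condition is a coupled system rather than a single algebraic equation; I would split $\omega$ along the filtration $\mathfrak{X}^\bullet_{f,1}, \mathfrak{X}^\bullet_{f,2}$ induced by $\iota_{\d f_1\wedge \d f_2}$, handle the piece lying in the image of $\wedge \d f_i$ using the exactness of $\wedge \d f_i$ from Corollary \ref{lemma: exactness wedge f}, and treat the transverse piece — which is precisely where $\SD^2$-classes live — using the explicit isomorphism $I$ and the relations in Corollary \ref{lemma: relation second cohomology} to pin down which $\R[[f_1,f_2]]$-combinations of $\zeta_1\wedge\zeta_2$, $\d(q_i\zeta_i)$ and $\d f_1 \wedge p_i\zeta_i$ survive. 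The hard part, and the step I expect to be the main obstacle, is this bookkeeping in degrees $2$ and $3$: disentangling the interaction between the de Rham part ($\d\iota_\pi$) and the algebraic division part, and verifying via the standard-basis computations of Lemmas \ref{ex: general 1} and \ref{ex: general} that the coefficients forced to lie in $\R[[x_2^2,x_4]]$ or $\R[[f_1,f_2]]$ are exactly the ones appearing in the stated normal forms, with no spurious extra kernel elements and no double-counting against exact terms. This is the long but essentially mechanical computation the authors refer to as the core of the paper.
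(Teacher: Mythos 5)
Your overall toolbox --- the explicit formulas for $\delta_\pi$, the division groups, and the standard-basis Lemmas \ref{ex: general 1} and \ref{ex: general} --- is indeed the paper's toolbox, and your top-degree argument agrees with the paper's (which invokes \cite{Pel09} precisely because $\SD^1(\d f_1,\d f_2)=0$). The genuine gap is in degree $1$, which is the core of the whole proposition. From $\d\alpha\wedge\d f_1\wedge\d f_2=0$, Proposition \ref{proposition: iso division group} only gives
\[
\d\alpha \;=\; c\beta_1+(px_1+q_1x_3+q_2)\beta_2+\d f_1\wedge\alpha_1+\d f_2\wedge\alpha_2 ,
\]
with $c\in\R$, $p\in\R[[x_2]]$, $q_1,q_2\in\R[[x_2,x_4]]$ \emph{arbitrary}. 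This space of classes is far larger than the set of classes $[\d(p_1\zeta_1+p_2\zeta_2)]$ with $p_i\in\R[[f_1,f_2]]$ appearing in \eqref{eq: ker degree 1}, so ``feeding $\d\alpha$ into'' the isomorphism $I$ does not extract the $p_i\zeta_i$ pieces. The missing idea is to exploit that $\d\alpha$ is closed: wedging the display above with $\d f_1$ and applying $\d$ yields $\d f_1\wedge\d I(c,p,(q_1,q_2))=\d f_1\wedge\d f_2\wedge\d\alpha_2$, which is exactly the hypothesis of Lemma \ref{ex: general} with $g=0$; this forces $x_2p+x_4q_1,\,q_2\in\R[[x_2^2+x_4^2]]$, and only then does Corollary \ref{lemma: relation second cohomology} identify the class with $[\d((c_1f_1^{m-1}f_2+c_2f_1^m)\zeta_2)]$. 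This is Lemma \ref{lemma: induction degree 3} of the paper; without it your argument stalls at its first step.

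The second half of your degree-$1$ sketch has a matching gap: once the division-group part is removed you are left with $\d\alpha=\sum_i\d f_i\wedge\alpha_i$, and you assert the remainder is closed and invoke the formal Poincar\'e lemma. But $\alpha$ minus any candidate combination $\sum_i g_i\d f_i+\sum_i p_i\zeta_i$ is not obviously closed --- the $\alpha_i$ are arbitrary $1$-forms, and showing they can be traded for exact terms and $\SR$-multiples of $\d f_i$ is the actual content of the statement. The paper does this by induction on homogeneous degree (the $\alpha_i$ again satisfy $\d f_1\wedge\d f_2\wedge\d\alpha_i=0$, so the induction hypothesis applies to them) and then kills the resulting cross terms $\zeta_i\wedge\d f_j$ by contracting with $T_1,T_2$ (Lemma \ref{lemma: dgt zero} and Corollary \ref{lemma: ps are zero with d}); nothing in your sketch plays either role. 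Relatedly, note that the paper does not treat degrees $2$ and $3$ by an independent filtration argument: degree $3$ is reduced to degree $1$ (after normalizing by $q_i\epsilon_i$, every cycle becomes $\d f_1\wedge\d\alpha$ with $\alpha\in\ker\delta_\pi$), and degree $2$ is reduced to degree $3$ by applying that case to $\d\beta$. Your filtration plan for those degrees might be workable, but it would still have to run through the degree-$1$ induction machinery, so the gap identified above is the essential one.
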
 
For top degree forms this is a consequence of \cite{Pel09}[Proposition 3.1], since $\mathcal{D}^1=0$. In the other degrees, Pelap uses the vanishing of $\mathcal{D}^2$, which doesn't hold in our case. Therefore, we need an adaptation which takes the non-triviality of $\mathcal{D}^2$ into account. The sections below provide proofs for the various degree and are ordered according to their logical dependence:
\[ \text{ degree 1 } \Rightarrow 
    \text{ degree 3} \Rightarrow 
    \text{ degree 2}.
\] 
Therefore, the proof for degree 1 is at the core of the proof of Proposition \ref{proposition: kernel differential}, which we obtain by an induction argument. Another important ingredient for the proof in any degree is a good understanding of the group $\mathcal{D}^2$, as described in Proposition \ref{proposition: iso division group} and Corollary \ref{lemma: relation second cohomology}. We have two more observations:
\begin{itemize}
    \item First we note that the standard scalar multiplication $m_t :\R^4\to \R^4$ on $\R^4$ satisfies
    \[ \delta_{\pi} \circ m_t^* = m_t^* \circ \delta_{\pi}. \]
    Hence $\delta_{\pi}$ maps homogeneous degree coefficients to homogeneous degree coefficients and it is enough to prove the statements for coefficients of a fixed homogeneous degree.
    \item It is easy to verify the implications $\Leftarrow$ in Proposition \ref{proposition: kernel differential} using Proposition \ref{proposition: Poisson differential}. Therefore, we really only need to show that $\Rightarrow$ holds. To do so it is enough to show that 
    \begin{equation}\label{eq: modulo rhs}
        \ker \delta_{\pi} \mod V = \{0\},
    \end{equation}
    where $V$ denotes the vector space generated by elements on the right hand side of Proposition \ref{proposition: kernel differential}. Whenever we look at this quotient we refer to it by $\operatorname{mod} V$. 
\end{itemize}
That being said, we are ready to start the proof of Proposition \ref{proposition: kernel differential}.

\subsection{Proof of Degree $1$}
For $\alpha\in \Omega^1_f$ recall from Proposition \ref{proposition: Poisson differential} that
\begin{align*}
     \delta_\pi(\alpha) &= \d \alpha \wedge \d f_1 \wedge \d f_2.
\end{align*}
Hence it suffices to prove that
\[ \d \alpha\wedge \d f_1 \wedge \d f_2  = 0 \quad \Leftrightarrow\quad \alpha = \d g_0 + \sum_{i=1}^2 p_i(f_1,f_2) \zeta_i + g_i \d f_i.\]
If the coefficients of $\alpha$ are of homogeneous degree $n+1$, i.e. $m_t^*\alpha = t^{n+2}\alpha$, the following lemma will allow us to derive the statement by induction over even and odd degrees, respectively.
\begin{lemma}\label{lemma: induction degree 3}
    For $n=0$ there exist $c_1 ,c_2\in \R$ such that
    \[ \d (\alpha -  (c_1\zeta_1+c_2\zeta_2) )= 0.\]
    For $ 1\le n=2m $ there exist $c_1 ,c_2\in \R$ such that
    \[ \d (\alpha-  (c_1f_1^m+c_2f_1^{m-1}f_2)\zeta_2) = \alpha_1 \wedge \d f_1 +\alpha _2\wedge \d f_2.
    \]
    Moreover, for $1\le n =2m+1$ we have 
    \[ \d \alpha = \alpha_1 \wedge \d f_1 +\alpha _2\wedge \d f_2.
    \]
    In all cases we may assume
    \[ m_t^*\alpha_i= t^n \alpha_i \qquad \text{ for } \quad i=1,2.\]
\end{lemma}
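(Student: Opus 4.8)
The plan is to fix the homogeneous degree $n$ of the coefficients of $\d\alpha$ throughout and to use the standing identity $\delta_\pi\alpha = \d\alpha\wedge\d f_1\wedge\d f_2$, so that the hypothesis $\delta_\pi\alpha=0$ says precisely that $\d\alpha$ lies in $Z^2:=\{\omega\in\Omega^2_f\mid \omega\wedge\d f_1\wedge\d f_2=0\}$, i.e.\ defines a class in $\mathcal{D}^2(\d f_1,\d f_2)$. The case $n=0$ I would treat by hand: here $\d\alpha$ is a constant-coefficient $2$-form, and \eqref{eq: omega wedge} together with the linear independence of $x_1^2+x_2^2$, $x_3^2+x_4^2$, $x_1x_3+x_2x_4$, $x_1x_4-x_2x_3$ forces $\omega_{12}=\omega_{34}=0$, $\omega_{24}=-\omega_{13}$, $\omega_{14}=\omega_{23}$; hence $Z^2$ meets the constant $2$-forms in $\operatorname{span}_{\R}(\beta_1,\beta_2)$, and $\d\alpha=c_1\beta_1+c_2\beta_2=\d(c_1\zeta_1+c_2\zeta_2)$ gives the claim.

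For $n\ge 1$ the first step is to normalize $\d\alpha$ via Proposition \ref{proposition: iso division group}. Since the coefficients are homogeneous of positive degree, the $\beta_1$-part (which has degree $0$) cannot occur, so I may write
\[ \d\alpha = h\,\beta_2 + \d f_1\wedge\gamma + \d f_2\wedge\delta, \qquad h=px_1+q_1x_3+q_2, \]
with $p\in\R[[x_2]]$ and $q_1,q_2\in\R[[x_2,x_4]]$ homogeneous so that $h$ has degree $n$. The crucial extra input is that $\d\alpha$ is exact, hence closed. Differentiating the display and using $\d\beta_2=0$ yields $\d h\wedge\beta_2=\d f_1\wedge\d\gamma+\d f_2\wedge\d\delta$; wedging on the left with $\d f_1$ annihilates the first term and gives $\d f_1\wedge\d h\wedge\beta_2=\d\delta\wedge\d f_1\wedge\d f_2$, which by \eqref{eq: omega wedge} lies in $\SJ\cdot\mu$.

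This is exactly the hypothesis of Lemma \ref{ex: general} with its Casimir term equal to $0$, so I obtain $x_2p+x_4q_1\in\R[[x_2^2+x_4^2]]$ and $q_2\in\R[[x_2^2+x_4^2]]$. Homogeneity now separates the two remaining cases. If $n=2m+1$ is odd there are no nonzero radial elements of odd degree, so $x_2p+x_4q_1=q_2=0$, and Corollary \ref{lemma: relation second cohomology} (with $a=b=0$) gives $[h\beta_2]=0$; thus $\d\alpha=\alpha_1\wedge\d f_1+\alpha_2\wedge\d f_2$. If $n=2m$ is even, then $x_2p+x_4q_1=a(x_2^2+x_4^2)^m$ and $q_2=b(x_2^2+x_4^2)^m$ for constants $a,b$, and Corollary \ref{lemma: relation second cohomology} identifies $[h\beta_2]=[(c_1f_1^m+c_2f_1^{m-1}f_2)\beta_2]$ with $c_1=2^{-m}b$ and $c_2=(-2)^{-m}a$. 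Since $\beta_2=\d\zeta_2$ and $\d(c_1f_1^m+c_2f_1^{m-1}f_2)\wedge\zeta_2$ is divisible by $\d f_1,\d f_2$, subtracting $\d\big((c_1f_1^m+c_2f_1^{m-1}f_2)\zeta_2\big)$ leaves $\d\big(\alpha-(c_1f_1^m+c_2f_1^{m-1}f_2)\zeta_2\big)$ inside $\d f_1\wedge\Omega^1_f+\d f_2\wedge\Omega^1_f$, which is the even-degree conclusion. In both cases $\alpha_1,\alpha_2$ can be taken with coefficients homogeneous of degree $n-1$, i.e.\ $m_t^*\alpha_i=t^n\alpha_i$, because every operation above respects the grading.

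The main obstacle is the middle step: turning the mere closedness of $\d\alpha$ into rigidity for the division-group representative $h$. This is where the explicit standard-basis computation behind Lemma \ref{ex: general} and the module relations of Corollary \ref{lemma: relation second cohomology} do the real work; in particular $f_1\beta_1=f_2\beta_2$ and $f_2\beta_1=-f_1\beta_2$ force $f_2^2\equiv-f_1^2$ on multiples of $\beta_2$, collapsing the degree-$2m$ Casimir classes to the two-dimensional span of $f_1^m\beta_2$ and $f_1^{m-1}f_2\beta_2$, while the even/odd dichotomy of the lemma is nothing more than the fact that radial power series have only even-degree homogeneous parts.
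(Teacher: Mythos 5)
Your proof is correct and takes essentially the same route as the paper's: decompose $\d \alpha$ via Proposition \ref{proposition: iso division group} (with the $\beta_1$-part vanishing for $n\ge 1$ by homogeneity), use closedness of $\d\alpha$ to arrive at the hypothesis of Lemma \ref{ex: general} with $g=0$, then apply Corollary \ref{lemma: relation second cohomology} and the identity $\beta_i=\d\zeta_i$ to absorb the division-group representative. The only cosmetic difference is that you handle $n=0$ by direct linear algebra on constant-coefficient $2$-forms, where the paper simply invokes $\d\zeta_i=\beta_i$.
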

\begin{proof}
    From Proposition \ref{proposition: iso division group} we obtain that
    \[ \d \alpha\wedge \d f_1 \wedge \d f_2 = 0\quad \Rightarrow \quad  \d \alpha =I(c,p,(q_1,q_2)) +\sum_{i=1}^2\alpha_i \wedge \d f_i\]
    for some $c\in \R, p\in \R[[x_2]], q\in\R[[x_2,x_4]]^2$ and $\alpha_i\in \Omega^1_f$. Note that since $\alpha$ has coefficient of homogeneous degree $n+1$ we may assume that the coefficients of $I(c,p,q)$ and $\alpha_i$ are homogeneous of degree $n$ and $n-1$, respectively. Wedging the equation for $\d \alpha$ with $\d f_1$ and applying $\d$ yields
    \begin{align}\label{eq: tangent degree 1 general}
        \dif f_1 \wedge \dif I(c,p,q)=\dif f_1\wedge \dif f_2 \wedge \dif \alpha_2.
    \end{align}
    If $n=0$, then the statement follows immediately from the fact that $\dif \zeta_i =\beta_i$. Hence we assume $n>0$. Then \eqref{eq: tangent degree 1 general} implies the hypothesis of Lemma \ref{ex: general} for $g \cdot \mu = 0$, which in turn implies that $x_2 p + x_4q_1, q_2 \in R[[x_2^2 + x_4^2]]$. Hence Corollary \ref{lemma: relation second cohomology} implies $[(x_1p + x_3q_1)\beta_2] = [c_1f_1^{m-1}f_2\beta_2]$ and $[q_2\beta_2] = [c_2f_1^m \beta_2]$ so that 
    \[ I(0,p,q) = [(c_1f_1^{m-1}f_2 + c_2f_1^m)\beta_2] \in \mathcal{D}^2.\]
    Finally, since $\d \zeta_i = \beta_i$ we get
    \[ I(0,p,q)= [\d((c_1f_1^{m-1}f_2 + c_2 f_1^m)\zeta_2)] \in \mathcal{D}^2,\]
    and therefore by changing $\alpha_i$ we obtain the statement.    
\end{proof}
We prove the statement first for the case of $n=2m-1$ being odd. In the base case, i.e. $m=0$ it is straightforward to see that $\tau = \d g$ for $g$ of homogeneous degree $1$. Hence we assume
\[ m_t^*\alpha=t^{2m+3}\alpha \qquad \text{ and }\qquad \d f_1 \wedge \d f_2 \wedge \d \alpha = 0. \]
From Lemma \ref{lemma: induction degree 3} we get that
\[ \d\alpha =\sum_{i=1}^2\alpha_i \wedge \d f_i \quad \text{ with } \quad m_t^*\alpha_i =t^{2m+1}\alpha_i \quad \text{ and } \quad \d f_1 \wedge \d f_2 \wedge \d \alpha_i = 0\quad \text{ for }\ i=1,2.  \]
We distinguish two cases. For $m=1$ the induction hypothesis implies that
\[\d \alpha = \d g_1\wedge \d f_1 +\d g_2\wedge \d f_2,\]
and hence we can conclude that
\[ \alpha = \d g_0+ g_1 \d f_1 +g_2 \d f_2, \]
For $1<m $ the induction hypothesis yields 
\[\d \alpha = \d g_1\wedge \d f_1 +\d g_2\wedge \d f_2 + g\d f_1\wedge \d f_2,\]
for some $g \in \SR$.
Taking $\d$ of this expression we see that $g\mu\in \ker \delta_{\pi}$. Hence $g\in \R[[f_1,f_2]]$ and we can absorb $g$ into $g_1$ and $g_2$. Therefore we obtain
\[ \alpha = \d g_0 + g_1 \d f_1 +g_2 \d f_2\]
for some $g_0,g_1,g_2\in \SR$, which proves the Proposition for $n$ odd.

For the even case let $n=2m$. The base case ($m=0$) can be checked by hand, where we obtain that
\[ \alpha= \d g_0+\sum_{i=1}^2p_i\zeta_i +g_i\d f_i \]
with $g_0\in \SR$ homogeneous of degree $2$ and $p_i, g_i \in \R$ for $i=1,2$.  For $\alpha$ satisfying 
\[ m_t^*\alpha=t^{2m+2}\alpha \qquad \text{ and }\qquad \d f_1 \wedge \d f_2 \wedge \d \alpha = 0 \]
we get from Lemma \ref{lemma: induction degree 3} that
\[ \d \alpha = \d \left((c_1f_1^m+c_2f_1^{m-1}f_2) \zeta_2 \right) + \sum_{j=1}^2(\d g_j +\sum_{i=1}^2p_i^j\zeta_i) \wedge \d f_j + g\d f_1\wedge \d f_2.\]
Here the $p_i^j\in \R[[f_1,f_2]]$ are homogeneous of degree $2m-2$, $g_j\in \SR$ are homogeneous of degree $2m$ for $i,j\in \{1,2\}$ and $g\in \SR$ is homogeneous of degree $2m-2$. Hence we have (see \eqref{eq: modulo rhs})
\begin{align}\label{eq: mod ker}
    \d \alpha = \sum_{i,j=1}^2p_i^j\zeta_i \wedge \d f_j + g\d f_1\wedge \d f_2 \qquad \mod \dif V.
\end{align} 

\begin{lemma}
We may assume that
\[ \d \alpha = \sum_{i=1}^2 p_i \zeta_i \wedge \d f_1 + g \d f_1 \wedge \d f_2 \qquad \mod \dif V.\]
\end{lemma}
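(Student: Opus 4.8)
The plan is to use the one fact about $\alpha$ that has not yet entered, namely that $\dif\alpha$ is closed. Since every element of $V$ is a $1$-form, $\dif V$ consists of exact (hence closed) $2$-forms, so the representative
\[
X:=\sum_{i,j=1}^2 p_i^j\,\zeta_i\wedge \dif f_j+g\,\dif f_1\wedge \dif f_2
\]
of $\dif\alpha$ modulo $\dif V$ is closed, $\dif X=0$. This is the crucial point: a \emph{generic} $X$ of this shape is \emph{not} reducible to the asserted form — that failure is precisely what the nonvanishing of $\mathcal{D}^2$ records — but a closed one is, and the strategy is to read off the exact constraint that $\dif X=0$ imposes on the $p_i^j$ and then cash it in.

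First I would compute $\dif X=0$ from $\dif\zeta_i=\beta_i$ and $\dif p_i^j\in\langle\dif f_1,\dif f_2\rangle$, simplifying the $\beta$-part with the identities $\dif f_1\wedge\beta_1=\dif f_2\wedge\beta_2$ and $\dif f_2\wedge\beta_1=-\dif f_1\wedge\beta_2$ (the latter is \eqref{eq: rel T, zeta}). Writing $A:=\dif f_1\wedge\beta_1$ and $C:=\dif f_2\wedge\beta_1$, this collapses $\dif X=0$ to
\[
(p_1^1+p_2^2)\,A+(p_1^2-p_2^1)\,C+\dif f_1\wedge \dif f_2\wedge\psi=0,
\]
where $\psi$ is the $1$-form assembled from $\partial_{f_2}p_i^1-\partial_{f_1}p_i^2$ and $\dif g$. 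Reducing modulo the submodule $\dif f_1\wedge \dif f_2\wedge\Omega^1_f$ of $3$-forms divisible by $\dif f_1\wedge \dif f_2$ (into which the $\psi$-term falls, while $A,C$ do not, since $[\beta_1]\neq 0$ in $\mathcal{D}^2$) isolates the constraint $(p_1^1+p_2^2)\,A+(p_1^2-p_2^1)\,C\in \dif f_1\wedge \dif f_2\wedge\Omega^1_f$.

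Next I would make this membership explicit. A direct computation expresses the four generators $\dif f_1\wedge \dif f_2\wedge \dif x_k$ of $\dif f_1\wedge \dif f_2\wedge\Omega^1_f$ as $\SR$-combinations of $A$ and $C$, with coefficient pairs proportional to $(-x_4,x_3),(x_3,x_4),(x_2,-x_1),(-x_1,-x_2)$; since $A,C$ are $\SR$-independent, the constraint becomes $(p_1^1+p_2^2,\,p_1^2-p_2^1)\in N$, where $N\subset\SR^2$ is the module generated by those four pairs. A normal-form computation in the spirit of Lemma~\ref{ex: general} — starting from the Euler relation $f_1A+f_2C=\tfrac12\dif(f_1^2+f_2^2)\wedge\beta_1$, which shows $(f_1,f_2)\in N$ — then identifies $N\cap\R[[f_1,f_2]]^2=\R[[f_1,f_2]]\cdot(f_1,f_2)$. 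Hence there is $h\in\R[[f_1,f_2]]$ with $p_1^1+p_2^2=hf_1$ and $p_1^2-p_2^1=hf_2$.

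Finally I would eliminate the $\dif f_2$-terms. Trading $p_1^2\zeta_1\wedge \dif f_2+p_2^2\zeta_2\wedge \dif f_2$ against elements $\dif(P\zeta_i)\in\dif V$ (with $\partial_{f_2}P_i=p_i^2$) replaces them, modulo $\zeta_i\wedge \dif f_1$-terms, by a combination $P_1\beta_1+P_2\beta_2$; the constraint $p_1^1+p_2^2=hf_1,\ p_1^2-p_2^1=hf_2$ is exactly what lets one express the relevant $\beta$-combination through the two $\mathcal{D}^2$-trivial combinations $f_1\beta_1-f_2\beta_2$ and $f_2\beta_1+f_1\beta_2$, up to a multiple of $\dif f_1\wedge \dif f_2$. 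At that point the relations $[f_1\beta_1-f_2\beta_2]=0=[f_2\beta_1+f_1\beta_2]$ together with the explicit multiplication formulae of Corollary~\ref{lemma: relation second cohomology} rewrite everything in terms of $\zeta_i\wedge \dif f_1$ and $\dif f_1\wedge \dif f_2$ modulo $\dif V$, which is the assertion. \emph{The hard part is precisely this last step}: because $\mathcal{D}^2\neq 0$, the identifications $f_1\beta_1\equiv f_2\beta_2$ hold only modulo $\dif f_1\wedge\Omega^1+\dif f_2\wedge\Omega^1$ and \emph{not} modulo the finer $\dif V$, so one cannot divide naively; reconciling the closedness constraint with the exact form of the division-group relations of Corollary~\ref{lemma: relation second cohomology} is the delicate bookkeeping that forces the $\dif f_2$-terms to collapse.
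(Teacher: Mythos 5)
Your proposal has a genuine gap, and its framing premise is also off. The paper's proof of this lemma never uses closedness of $\d \alpha$: it proves the \emph{unconditional} statement that every form $\sum_{i,j}p_i^j\zeta_i\wedge\d f_j+g\,\d f_1\wedge\d f_2$ with $p_i^j\in\R[[f_1,f_2]]$ is congruent $\operatorname{mod}\d V$ to one with only $\d f_1$-terms. Indeed, since $f_1\zeta_1-f_2\zeta_2$ and $f_2\zeta_1+f_1\zeta_2$ lie in $V$, the forms $\d\big(p(f_1\zeta_1-f_2\zeta_2)\big)$ and $\d\big(q(f_2\zeta_1+f_1\zeta_2)\big)$ lie in $\d V$ for all $p,q\in\R[[f_1,f_2]]$; expanding them, and using that $\d f_1\wedge\zeta_1-\d f_2\wedge\zeta_2=f_1\beta_1-f_2\beta_2$ holds as an exact identity (not merely in $\mathcal{D}^2$), one finds that the pairs of $\d f_2\wedge\zeta_i$-coefficients which can be cancelled $\operatorname{mod}\d V$ form the image of $(p,q)\mapsto\big(q-f_1\partial_yp+f_2\partial_yq,\ p+f_2\partial_yp+f_1\partial_yq\big)$, and the paper proves this map is surjective by a degreewise induction on the top power of $f_2$. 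So nonvanishing of $\mathcal{D}^2$ is no obstruction here, contrary to your opening claim, and closedness buys nothing. More importantly, the step you defer as ``delicate bookkeeping'' is exactly where the content of the lemma sits, and the mechanism you sketch for it fails: the constraint $\d X=0$ only ties $p_1^1+p_2^2$ and $p_1^2-p_2^1$ to the module $N$, leaving the pair $(p_1^2,p_2^2)$ of $\d f_2$-coefficients, hence $(P_1,P_2)$, completely arbitrary. A generic $P_1\beta_1+P_2\beta_2$ is \emph{not} an $\R[[f_1,f_2]]$-combination of $f_1\beta_1-f_2\beta_2$ and $f_2\beta_1+f_1\beta_2$, even up to $\d f_1\wedge\d f_2$-multiples: for the closed form $X=\zeta_1\wedge\d f_2+\zeta_2\wedge\d f_1$ your recipe gives $(P_1,P_2)=(f_2,0)$, and $(f_2,0)=a(f_1,-f_2)+b(f_2,f_1)$ forces $a=cf_1$, $b=cf_2$ and then $c(f_1^2+f_2^2)=f_2$, which is unsolvable in $\R[[f_1,f_2]]$. (This $X$ does reduce to $0$, being $-\tfrac12\d(f_2\zeta_1+f_1\zeta_2)$ up to normalization, but not by the route you describe; seeing it in general is precisely the surjectivity argument above.)

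There is also a concrete error in your intermediate module computation: $N\cap\R[[f_1,f_2]]^2$ is strictly larger than $\R[[f_1,f_2]]\cdot(f_1,f_2)$. From your own generators, $x_1(x_2,-x_1)-x_2(-x_1,-x_2)=\big(2x_1x_2,-(x_1^2-x_2^2)\big)$ and $x_4(x_3,x_4)-x_3(-x_4,x_3)=\big(2x_3x_4,-(x_3^2-x_4^2)\big)$, and their sum is $(f_2,-f_1)\in N$; consequently $(f_1^2+f_2^2,0)=f_1(f_1,f_2)+f_2(f_2,-f_1)$ lies in $N\cap\R[[f_1,f_2]]^2$ but is not a multiple of $(f_1,f_2)$. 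So the constraint you extract from closedness is weaker than claimed, which further undermines the final step. In summary: your preliminary computations (the formula for $\d X$, and writing $\d f_1\wedge\d f_2\wedge\d x_k$ as $\SR$-combinations of $A$ and $C$) are correct, but the elimination of the $\d f_2$-terms — the actual assertion of the lemma — is never carried out, the sketched mechanism for it breaks on explicit examples, and what would repair it is the paper's surjectivity argument, which renders the whole closedness detour unnecessary.
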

\begin{proof}
Note that we have the relations
\begin{align*}
    \dif f_1\wedge \zeta_1 -\dif f_2\wedge \zeta_2 =\d (f_1\zeta_1-f_2\zeta_2)\quad \text{ and }\quad \dif f_2\wedge \zeta_1 + \dif f_1\wedge \zeta_2 =\d (f_2\zeta_1+f_1\zeta_2).
\end{align*} 
Hence for any $p,q \in \R[[f_1,f_2]]$ we have:
\begin{align*}
    (p+f_2 \partial_y p) \d f_2 \wedge \zeta_2 -  f_1 \partial_yp \d f_2 \wedge \zeta_1 = (p+f_1 \partial_x p)\d f_1 \wedge \zeta_1 -  f_2\partial_x p \d f_1 \wedge \zeta_2  \qquad \mod \d V, \\
    (q+f_2\partial_y q) \d f_2 \wedge \zeta_1 +  f_1 \partial_y q \d f_2 \wedge \zeta_2 = -(q+ f_1 \partial_x q) \d f_1 \wedge \zeta_2 -  f_2 \partial_x q \d f_1 \wedge \zeta_1 \qquad \mod \d V.
\end{align*}
Adding those two equations, we want to determine the image of 
\[ \begin{pmatrix} p \\ q\end{pmatrix} \mapsto \begin{pmatrix} 
q - f_1 \partial_y p + f_2 \partial_y q \\ p + f_2 \partial_y p +  f_1 \partial_y q\\
\end{pmatrix}.\]
By looking at the maximal powers of $f_2$ in the image and inductively decreasing, we obtain that the map is injective, and hence surjective, thus proving the claim.
\end{proof}
Applying $\d$ to equation \eqref{eq: mod ker} yields
\[ 0 = \d(\sum_{i=1}^2p_i\zeta_i) \wedge \d f_1 + \d g\wedge \d f_1\wedge \d f_2. \]
To show that this implies \eqref{eq: ker degree 1} we first prove an auxiliary Lemma.
\begin{lemma}\label{lemma: dgt zero}
    For any $g\in \SR$ we have
    \[ \Lie_{T_1}g,\Lie_{T_2}g\in \R[[f_1,f_2]] \quad \Rightarrow \quad \Lie_{T_1}g=\Lie_{T_2}g=0.\]
\end{lemma}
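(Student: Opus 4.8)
The plan is to complexify the two derivations and to diagonalize the single holomorphic operator $T:=\Lie_{T_1}+i\,\Lie_{T_2}$, which by the definitions in the introduction is the complex field $T=z\partial_w-w\partial_z$ acting as a $\C$-linear derivation (one checks that its real and imaginary parts reproduce the explicit real forms of $T_1,T_2$). Extending $\Lie_{T_1}$ and $\Lie_{T_2}$ $\C$-linearly to $\SR\otimes_\R\C=\C[[x_1,\dots,x_4]]$, the hypothesis becomes
\[ Tg=\Lie_{T_1}g+i\,\Lie_{T_2}g=h_1+ih_2,\qquad h_1,h_2\in\R[[f_1,f_2]],\]
and the goal is to force $h_1=h_2=0$.

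First I would pass to the complex-linear coordinates $u:=z+iw$ and $v:=z-iw$, together with their conjugates $\bar u,\bar v$. Since $(u,v,\bar u,\bar v)$ is again a $\C$-linear coordinate system on $\C^2=\R^4$, we have $\SR\otimes\C=\C[[u,v,\bar u,\bar v]]$. Using $Tz=-w$ and $Tw=z$ one computes directly that $Tu=iu$, $Tv=-iv$ and $T\bar u=T\bar v=0$; hence in these coordinates $T=iu\partial_u-iv\partial_v$ acts diagonally, sending a monomial $u^jv^k\bar u^l\bar v^m$ to $i(j-k)\,u^jv^k\bar u^l\bar v^m$. In particular $Tg$ has vanishing coefficient on every monomial with $j=k$.

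The second ingredient is the identification of the target. A short computation gives $f_1+if_2=z^2+w^2=uv$ and $f_1-if_2=\bar u\bar v$, so the complexification $\C[[f_1,f_2]]$ equals $\C[[uv,\bar u\bar v]]$, which is spanned by the ``balanced'' monomials $u^pv^p\bar u^q\bar v^q$, i.e.\ precisely those with equal $u$- and $v$-exponents. Writing $g=\sum c_{jklm}u^jv^k\bar u^l\bar v^m$ and comparing coefficients in $Tg=h_1+ih_2$ then finishes the argument: the left-hand side is supported on monomials with $j\ne k$, while the right-hand side is supported on monomials with $j=k$, so both must vanish. Thus $h_1=h_2=0$, i.e.\ $\Lie_{T_1}g=\Lie_{T_2}g=0$.

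The conceptual obstacle this approach is designed to avoid is that $T_2$ generates a non-compact (hyperbolic) flow, so the naive strategy of averaging $\Lie_{T_i}g$ over the orbits of $T_i$—which works instantly for the periodic flow of $T_1$ and shows $\Lie_{T_1}g=0$ at once—breaks down for $T_2$, and is in any case delicate to justify for non-convergent formal series. Diagonalizing $T$ replaces this analytic difficulty by the transparent algebraic weight condition $j\ne k$, against which the hypothesis $h_1+ih_2\in\C[[f_1,f_2]]$ lands exactly in the complementary weight-zero part. The only genuine computations required are the diagonalization $T=iu\partial_u-iv\partial_v$ and the identity $z^2+w^2=uv$, both immediate.
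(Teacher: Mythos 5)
Your proof is correct. The key identities all check out: with $T_1,T_2$ defined in the paper as the real and imaginary parts of $T=z\partial_w-w\partial_z$, the complexified operator is indeed $\Lie_{T_1}+i\Lie_{T_2}=T$; the linear change of variables $u=z+iw$, $v=z-iw$ gives $Tu=iu$, $Tv=-iv$, $T\bar u=T\bar v=0$; and $f_1+if_2=z^2+w^2=uv$, $f_1-if_2=\bar u\bar v$, so $\C[[f_1,f_2]]=\C[[uv,\bar u\bar v]]$ is exactly the span of the monomials $u^pv^p\bar u^q\bar v^q$. Since $T$ scales the monomial $u^jv^k\bar u^l\bar v^m$ by $i(j-k)$, the image of $T$ meets this span trivially, forcing $Tg=h_1+ih_2=0$ and hence $h_1=h_2=0$ by reality of the $h_i$.

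This is a genuinely different route from the paper's. The paper stays over $\R$: it first reduces to even homogeneous degrees, writes $g$ as a power series in ten explicit quadratics (including $f_1,f_2$), checks that $\Lie_{T_i}$ maps generators to generators, and argues that the total degree in the variables $f_1,f_2$ is either preserved or the monomial is killed, so that the hypothesis reduces to the action on $\R[[f_1,f_2]]$, where $\Lie_{T_i}$ acts trivially. Your complexification replaces this bookkeeping by a transparent diagonalization: the weight decomposition under $T$ is carried by an honest monomial basis, so there is no issue with the algebraic relations among the ten real quadratic generators (their expression of $g$ is not unique, which is the delicate point the paper's sketch glosses over), and no separate treatment of odd degrees is needed. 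What the paper's argument buys in exchange is that it never leaves the real category and exhibits the invariant quadratics explicitly, which is in the same spirit as the other normal-form computations in the paper; your argument is shorter, self-contained, and makes the underlying mechanism --- that $\R[[f_1,f_2]]$ sits in the zero weight space while $\operatorname{im}\Lie_T$ avoids it --- completely explicit.
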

\begin{proof}
    Note that we can focus on even homogeneous degrees only. Hence we can view $g$ as an element in ring generated by
    \begin{align*}
        f_1,\, f_2,\,  x_1^2+x_2^2+x_3^2+x_4^2,\, x_1^2-x_2^2-x_3^2+x_4^2,\, x_1^2+x_2^2-x_3^2-x_4^2, \qquad \qquad \\
        2(x_1x_2-x_3x_4),\, 2(x_1x_3+x_2x_4),\, 2(x_1x_3-x_2x_4),\, 2(x_1x_4+x_2x_3),\, 2(x_1x_4-x_2x_3). 
    \end{align*}
    By a direct computation we can check that $\Lie_{T_i}$ maps generators to generators. More generally, for a homogeneous monomial of degree $n$ in the above variables we note that either the total degree $n_f$ in the variables of $f_1$ and $f_2$ is preserved or the monomial is maped to zero. Therefore it is enough to look at polynomomials in $f_1$ and $f_2$, but on such $\Lie_{T_i}$ acts trivial.
\end{proof}
We use the Lemma to show the following.
\begin{corollary}\label{lemma: ps are zero with d}
    Fix $j=1,2$. For $p_1,p_2\in \R[[f_1,f_2]]$ and $g\in \SR$ the condition
    \begin{align*}
        \dif g\wedge\dif f_1\wedge\dif f_2  + \sum_{i=1}^2 \dif (p_i  \zeta_i)\wedge \dif f_j = 0 
    \end{align*}
    implies $p_1,p_2=0$ and $g\in \R[[f_1,f_2]]$.
\end{corollary}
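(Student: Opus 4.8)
The plan is to strip the exterior derivatives off the hypothesis, reduce to an equation that can be probed by contracting with the foliation-tangent fields $T_1,T_2$, and then feed the outcome into Lemma \ref{lemma: dgt zero}. I treat $j=1$; the case $j=2$ is entirely analogous after exchanging the roles of $\d f_1$ and $\d f_2$. First I would expand $\d(p_i\zeta_i)=\d p_i\wedge\zeta_i+p_i\beta_i$ using $\d\zeta_i=\beta_i$. Since $p_i\in\R[[f_1,f_2]]$ we have $\d p_i=\partial_x p_i\,\d f_1+\partial_y p_i\,\d f_2$, so $\d p_i\wedge\zeta_i\wedge\d f_1=\partial_y p_i\,\d f_1\wedge\d f_2\wedge\zeta_i$, and the hypothesis becomes
\[ \d f_1\wedge\d f_2\wedge\eta+\sum_{i=1}^2 p_i\,\beta_i\wedge\d f_1=0,\qquad \eta:=\d g+\sum_{i=1}^2(\partial_y p_i)\,\zeta_i, \]
which I will refer to as the relation $(*)$.

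Next I would contract $(*)$ separately with $T_1$ and $T_2$. The key point is that $T_1,T_2$ are tangent to $\F$, so $\iota_{T_k}\d f_1=\iota_{T_k}\d f_2=0$ by \eqref{eq: relation E, T and f}; hence $\iota_{T_k}(\d f_1\wedge\d f_2\wedge\eta)=(\iota_{T_k}\eta)\,\d f_1\wedge\d f_2$ and $\iota_{T_k}(\beta_i\wedge\d f_1)=(\iota_{T_k}\beta_i)\wedge\d f_1$. A short direct computation yields the contraction identities
\begin{align*}
    \iota_{T_1}\zeta_1&=\tfrac14 f_1, & \iota_{T_1}\zeta_2&=\tfrac14 f_2, & \iota_{T_1}\beta_1&=-\tfrac14\d f_1, & \iota_{T_1}\beta_2&=-\tfrac14\d f_2,\\
    \iota_{T_2}\zeta_1&=\tfrac14 f_2, & \iota_{T_2}\zeta_2&=-\tfrac14 f_1, & \iota_{T_2}\beta_1&=-\tfrac14\d f_2, & \iota_{T_2}\beta_2&=\tfrac14\d f_1.
\end{align*}
Substituting these into $(*)$ and using that $h\,\d f_1\wedge\d f_2=0$ forces $h=0$ (as $\SR$ is an integral domain and $\d f_1\wedge\d f_2\neq0$) produces the two scalar identities
\begin{align*}
    \Lie_{T_1}g&=-\tfrac14\big(f_1\partial_y p_1+f_2\partial_y p_2+p_2\big),\\
    \Lie_{T_2}g&=-\tfrac14\big(f_2\partial_y p_1-f_1\partial_y p_2+p_1\big).
\end{align*}
Both right-hand sides lie in $\R[[f_1,f_2]]$, so Lemma \ref{lemma: dgt zero} applies and gives $\Lie_{T_1}g=\Lie_{T_2}g=0$, i.e.
\[ f_1\partial_y p_1+f_2\partial_y p_2+p_2=0\quad\text{and}\quad f_2\partial_y p_1-f_1\partial_y p_2+p_1=0. \]

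Finally I would solve this first-order system inside $\R[[f_1,f_2]]$. Rewriting the two relations as $\partial_y(f_1p_1+f_2p_2)=0$ and $\partial_y(f_2p_1-f_1p_2)=0$ shows that $A:=f_1p_1+f_2p_2$ and $B:=f_2p_1-f_1p_2$ both lie in $\R[[f_1]]$. Solving the linear system for $p_1,p_2$ gives $(f_1^2+f_2^2)p_1=f_1A+f_2B$ and $(f_1^2+f_2^2)p_2=f_2A-f_1B$. Applying the substitution homomorphism $\R[[f_1,f_2]]\to\C[[f_1]]$, $f_2\mapsto if_1$ (under which $f_1^2+f_2^2\mapsto0$) to the first relation yields $f_1\big(A(f_1)+iB(f_1)\big)=0$ in $\C[[f_1]]$, hence $A=B=0$ and therefore $p_1=p_2=0$. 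With $p_1=p_2=0$ the relation $(*)$ collapses to $\d g\wedge\d f_1\wedge\d f_2=0$, which by Proposition \ref{proposition: Poisson differential} is precisely $\delta_\pi(g\mu)=0$; the already established top-degree case \eqref{eq: ker deg 4} then forces $g\in\R[[f_1,f_2]]$.

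The genuinely routine part is the verification of the eight contraction identities. The two delicate points are the passage from $\Lie_{T_i}g\in\R[[f_1,f_2]]$ to $\Lie_{T_i}g=0$, which is exactly what Lemma \ref{lemma: dgt zero} is designed to supply, and the division step $f_2\mapsto if_1$, which converts the resulting system into the vanishing of $p_1$ and $p_2$. I expect the latter algebraic manipulation, together with keeping the signs and coefficients in the contraction identities straight, to be the main place where care is needed.
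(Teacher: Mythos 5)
Your proof is correct and follows essentially the same route as the paper's: contract the relation with the leaf-tangent fields $T_1,T_2$, use \eqref{eq: rel T, zeta contract} and \eqref{eq: relation E, T and f} to obtain the two scalar identities, invoke Lemma \ref{lemma: dgt zero} to conclude $\Lie_{T_i}g=0$, and finish with \eqref{eq: ker deg 4}. The only difference is that you spell out why the resulting first-order system forces $p_1=p_2=0$ (via $A=f_1p_1+f_2p_2$, $B=f_2p_1-f_1p_2$ and the substitution $f_2\mapsto if_1$), a step the paper's proof asserts without detail.
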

\begin{proof}
    Let $j=1$. Note that we have the relations 
    \begin{align}\label{eq: rel T, zeta contract}
        4\iota_{T_i}\zeta_1=f_i, \quad 4\iota_{T_1}\zeta_2=f_2 \quad  4\iota_{T_2}\zeta_2=-f_1 \qquad \Lie_{T_i}\zeta_j=0.
    \end{align}
    Hence contracting with $T_1$ and $T_2$, respectively together with \eqref{eq: relation E, T and f} yields the equations
    \begin{align*}
        0=f_1\partial_y p_1+f_2\partial_yp_2+p_2 +4\dif g(T_1) \quad \text{ and }\quad 0=f_2\partial_y p_1 +p_1- f_1\partial_y p_2 +4\dif g(T_2) 
    \end{align*}
    and therefore $\dif g(T_1),\dif g(T_2)\in \R[[f_1,f_2]]$. Hence Lemma \ref{lemma: dgt zero} implies $\Lie_{T_i}(g)=0$ and the above equations imply $p_1=p_2=0$. The statement for $g$ then follows from \eqref{eq: ker deg 4}. The proof for $j=2$ follows along the same lines.
\end{proof}
\begin{remark}\label{remark: adapted lemma}
    Note that the conclusion of Corollary \ref{lemma: ps are zero with d} also holds under the assumption
    \begin{align*}
        \dif g\wedge\dif f_1\wedge\dif f_2  + \sum_{i=1}^2 p_i  \d\zeta_i\wedge \dif f_j = 0 
    \end{align*}
    The proof follows along the same lines.
\end{remark}

\subsection{Proof of Degree $3$}
The argument consists of two steps which are proven below. Suppose that $\gamma \in \Omega^3_f$ satisfies $\delta_\pi \gamma = 0$ then we show the following:
\begin{enumerate}
    \item[\textbf{Step 1}:] Replacing $\gamma$ by $\gamma - q_1 \epsilon_1 - q_2 \epsilon_2$ for some $q_1,q_2 \in \R[[f_1,f_2]]$ we may assume that
    \begin{equation}\label{eq:DiverenceCondition} \star^{-1} (\d \gamma) = \iota_{\star^{-1}\gamma}\d f_1 = \iota_{\star^{-1}\gamma}\d f_2 = 0.
    \end{equation}
    \item[\textbf{Step 2}:] If $\gamma$ satisfies \eqref{eq:DiverenceCondition} then there exists $\alpha \in \ker \delta_\pi \subset \Omega^1_f$ such that
    \[ \gamma = \d f_1 \wedge \d \alpha.\]
\end{enumerate}
Using \eqref{eq: ker degree 1} the last equation can be rewritten as
\[\gamma = \d f_1 \wedge \d\big(f_2\d g + \sum_{i=1}^2 p_i \zeta_i\big),\]
for $g \in \SR$ and $p_1,p_2 \in \R[[f_1,f_2]]$, which finishes the proof.

\textbf{\underline{Proof of Step 1:}}
Let $\gamma \in \Omega^3_f$ be such that $m_t^*\gamma= t^{m+3}\gamma$ and $\delta_\pi \gamma = 0$, i.e.
\begin{align*}
    0 = \delta_{\pi} \gamma =\iota_{\pi} (\d \gamma) - \d\iota_{\star^{-1}\gamma}(\d f_1 \wedge\d f_2).
\end{align*}
Wedging the above equation with $\dif f_1$ and $\dif f_2$ respectively, using Proposition \ref{proposition: Poisson differential} and \eqref{eq: ker deg 4} implies
\begin{align*}
        \iota_{\star^{-1}\gamma}(\dif f_i) =p_i \in \R[[f_1,f_2]],
    \end{align*}
    satisfying the equation
    \begin{align}\label{eq: transverse degree 1}
        \star^{-1}(\d \gamma) \d f_1\wedge \d f_2 =\big((\partial_x p_1) (f_1,f_2)+(\partial_y p_2) (f_1,f_2)\big) \d f_1\wedge \d f_2
    \end{align}
    and hence in particular that $\star^{-1}(\d \gamma)\in \R[[f_1,f_2]]$. Using the relations \eqref{eq: relation E, T and f}
    \begin{align*}
        \delta(E_1)=2,\quad \delta(E_2)=0, \quad  
    \end{align*}
    it is easy to see that $\epsilon_i=\star E_i\in \ker \delta_\pi$. Given $q_1,q_2\in \R[[f_1,f_2]]$, the identities above also imply:
    \begin{align*}
        \iota_{q_1E_1+q_2E_2}\dif f_1=&\ (xq_1 +yq_2)(f_1,f_2), \\ 
        \iota_{q_1E_1+q_2E_2}\dif f_2=&\ (yq_1 -xq_2)(f_1,f_2). 
    \end{align*}
    Hence, replacing $\gamma $ by $\gamma -q_1 \epsilon_1 - q_2\epsilon_2$, for suitably chosen $q_1$ and $q_2$, we may assume that
    \begin{align*}
        p_1(f_1,f_2)= p_1(f_1) \quad \text{ and }\quad p_2(f_1,f_2)= p_2(f_1).
    \end{align*}
    We want to show that $p_1 = p_2 = 0$. Recall that be homogeneity of $\gamma$ and since $f_1$ and $f_2$ are homogeneous of degree $2$, we have $m= 2n-1$ where $n\geq 1$ is the homogeneous degree of $p_i$ as a polynomial in one variable. As such we can write
    \[ p_1 = c_1 x^n\quad \text{and}\quad p_2 = c_2 x^n,\]
    for some $c_1,c_2 \in \R$. We also write $\gamma = \sum_{i=1}^4 \gamma_i\star( \partial_{x_i})$. We denote by $a_i$ and $b_i$ respectively the coefficient of the $x_1^{2n-1-i}x_2^i$-terms for $\gamma_1$ and $\gamma_2$, $0\leq i\leq 2n-1$. With this notation, explicitly computing the $x_1^{2n-i}x_2^i$-terms of Equation \eqref{eq: transverse degree 1} yields the following conditions:
    \begin{align*}
        2a_0 = c_1 \quad &\ \text{ and } \quad 2b_0=c_2,\\
        (-1)^j\binom{n}{n-j}c_1= 2(a_{2j}+  b_{2j-1}) \quad &\ \text{ and } \quad (-1)^j\binom{n}{n-j}c_2= 2(a_{2j-1}+b_{2j})\quad &\ \text{ for }\ 1\le j\le n,\\
        a_{2j+1}= -b_{2j} \quad &\ \text{ and } \quad  a_{2j}=-b_{2j+1}\quad \text{ for }\ 0\le j\le n-1,\\
        2b_{2n-1}=(-1)^{n-1}c_1 \quad &\ \text{ and }\quad 2a_{2n-1}=(-1)^nc_2.
    \end{align*}
    Using the equations in lines 1-3 we obtain 
    \begin{align*}
        a_{2j+1}= - b_{2j} =c_2\frac{(-1)^j}{2}\sum_{0\le l\le j}\binom{n}{l} \quad \text{ and }\quad  a_{2j}=-b_{2j+1}= c_2\frac{(-1)^j}{2}\sum_{0\le l\le j}\binom{n}{l}
    \end{align*}
    for $1\le j\le n-1$, and hence in particular that
    \begin{align*}
        a_{2n-1}=c_2\frac{(-1)^{n-1}}{2}(2^n-1) \quad \text{ and }\quad b_{2n-1}=c_1\frac{(-1)^{n}}{2}(2^n-1)
    \end{align*}
    which yields a contradiction for all $1\le n$ unless $c_1=c_2=0$. This completes the proof of Step 1.

\textbf{\underline{Proof of Step 2:}} Assume $\gamma$ satisfies \eqref{eq:DiverenceCondition}. Observe that for any $\alpha \in \Omega^1_f$ we have
\begin{align}\label{eq: canonical iso}
    \star\circ \sharp \ (\iota_{\star^{-1}\gamma}(\alpha)) = \alpha \wedge \gamma
\end{align} 
where $\sharp: \Omega^0_f\to \mathfrak{X}^0_f$ is the canonical identification. Hence Corollary \ref{lemma: exactness wedge f} implies that
\begin{align*}
    \gamma= \dif f_1 \wedge \beta
\end{align*}
for some $\beta \in \mathcal{D}^2(\d f_1,\d f_2)$ with $m_t^*\beta = t^{m+1}\beta$. Thus, it remains to show that $\beta=\d\alpha$ with $\alpha \in \ker \delta_\pi$. From Proposition \ref{proposition: iso division group} we know that
\begin{align*}
    \beta= I(c,p,q)+\sum_{i=1}^2\dif f_i \wedge \alpha_i 
\end{align*}
for some $c\in \R, p\in \R[[x_2]], q\in\R[[x_2,x_4]]^2$ and $\alpha_i\in \Omega^1(\SR)$. Moreover, since we are interested in $\gamma$ we may assume that $\alpha_1 = 0$, and rename $\alpha_2$ to $\alpha$. Then Equation \eqref{eq:DiverenceCondition} implies:
\begin{align}\label{eq: tangent degree 13 general}
    \dif f_1 \wedge \dif I(c,p,q)=\dif f_1\wedge \dif f_2 \wedge \dif \alpha.
\end{align}
Hence it suffices to show that
\[ \beta= \sum_{i=1}^2\dif f_i \wedge \alpha_i \mod \d V.\]
That is, we may assume $I(c,p,q) = 0$. If $\beta$ has constant coefficients,  i.e.\ $m=1$, then this follows immediately from the fact that $\dif \zeta_i =\beta_i$. Hence let's assume $\beta$ is homogeneous of degree $m>1$. To see the statement, we recall that the coefficients of $\d f_1 \wedge \d f_2$ are (by definition) in the ideal $\SJ$ defined in \eqref{eq: ideal df12}, so that \eqref{eq: tangent degree 13 general} is zero $\operatorname{mod} \SJ$. This means the left hand side of \eqref{eq: tangent degree 13 general} satisfies the hypothesis of Lemma \ref{ex: general} taking $g \cdot \mu = 0$, which in turn implies that $x_2 p + x_4q_1, q_2 \in R[[x_2^2 + x_4^2]]$. Hence $m=2n+1$ for $1\le n$, otherwise we are done. It then follows from Corollary \ref{lemma: relation second cohomology} that $[(x_1p + x_3q_1)\beta_2] = [c_1f_1^{n-1}f_2\beta_2]$ and $[q_2\beta_2] = [c_2f_1^n \beta_2]$ so that 
\[ I(0,p,q) = [(c_1f_1^{n-1}f_2 + c_2f_1^n)\beta_2].\]
Finally, since $\d \zeta_i = \beta_i$, and $\d(c_1 f_1^{n-1}f_2 + c_2f_1^n) = 0 \ \operatorname{mod} \SJ$ we obtain
\[ I(0,p,q)= [\d((c_1f_1^{n-1}f_2 + c_2 f_1^n)\zeta_2)],\]
as desired. Hence the proof is complete.

\subsection{Proof of Degree $2$}\label{section: kernel degree 2 proof}

Let $\beta \in \Omega^2_f$ such that
\begin{align*}
    \delta_{\pi}(\beta) =&\iota_{\star^{-1}\dif\beta}(\dif f_1\wedge \dif f_2) -\dif \iota_{\pi}(\beta)=0.
\end{align*}
 Applying $\d $ to the equation implies that $\d \beta \in \ker \delta_{\pi}$. By a direct computation we can check that
 \[ q\zeta_1\wedge \zeta_2 \in \ker \delta_\pi \]
 for any $q\in \R[[f_1,f_2]]$. Moreover, the relations
 \begin{align}\label{eq: relation zeta f E}
     \d f_1\wedge \zeta_1 \wedge \zeta_2 = f_1 \epsilon_2 -f_2\epsilon_1, \quad \d f_2\wedge \zeta_1 \wedge \zeta_2 = f_2 \epsilon_2 +f_1\epsilon_1 \quad \text{ and }\quad \dif (\zeta_1\wedge \zeta_2)  = 2 \epsilon_2
 \end{align} 
imply that we have
\begin{align}\label{eq: d zeta wedge}
    \d (q\zeta_1\wedge \zeta_2)= (2q +f_1\partial_x q +f_2\partial_y q)\epsilon_2 +( f_1\partial_yq-f_2\partial_xq)\epsilon_1.
\end{align} 
Therefore, by replacing $\beta$ by $\beta -q\zeta_1\wedge \zeta_2 $ and using \eqref{eq: ker degree 3} we can assume that 
\[ \d \beta =  \dif f_1\wedge\dif f_2\wedge \dif g + q\epsilon_1 +\dif f_1\wedge \dif  \big(p_i\zeta_i\big)  \]
for some $q,p_i \in \R[[f_1,f_2]]$ and $g\in\SR$. Taking the differential $\d$ of this equation yields
\[ 0 = \d q \wedge \epsilon_1 + 2 q\mu =(f_1\partial_x q+f_2\partial_yq +2q)\mu \]
and hence $q=0$ and we can conclude that 
\[ \beta=g\dif f_1\wedge\dif f_2 +\dif f_1\wedge p_i\zeta_i +\d \alpha  \]
for $p_i \in \R[[f_1,f_2]]$, $g\in\SR$ and $\alpha\in \Omega^1_f$. In other words, the map
\begin{align*}
    \d: \{ \beta=g\dif f_1\wedge\dif f_2 +\dif f_1\wedge p_i\zeta_i +\d \alpha\}\cap \ker \delta_\pi \to \{ \gamma \in \ker \delta_\pi | \ \gamma \text{ satisfies } \eqref{eq: transverse degree 1} \}
\end{align*}
is surjective. Therefore, it is enough to study its kernel, i.e.\ we may assume $\beta= \d \alpha$ such that
\[ \delta_{\pi}(\beta) =\iota_{\star^{-1}\dif\beta}(\dif f_1\wedge \dif f_2) -\dif \iota_{\pi}(\beta)=-\dif \iota_{\pi}(\d \alpha)=0.\]
By Proposition \ref{proposition: Poisson differential} this implies that $\alpha \in \ker \delta_\pi$ and hence \eqref{eq: ker degree 1} implies the statement.

\section{Poincare series}\label{sec: poincare series}
In this section we use the Hilbert-Poincare series to determine the dimension of the Poisson homology groups. We achieve this by constructing short exact sequences for the kernels of $\delta_\pi$ as described in Proposition \ref{proposition: kernel differential}. We obtain the following result:

\begin{proposition}\label{proposition: Poincare series}
For the Poisson homology spaces we have the following Hilbert-Poincare series:
\begin{align*}
    HP_{H_0}(t)=&\ \frac{4t^2+4t+1}{(1-t^2)^2},\\
    HP_{H_1}(t)=&\ 4t\frac{t^3+t^2+2t+1}{(1-t^2)^2},\\
    HP_{H_2}(t)=&\ 2t^2\frac{4t^2+2t+1}{(1-t^2)^2}.
\end{align*}
\end{proposition}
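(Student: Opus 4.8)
The plan is to read the Hilbert--Poincaré series of the homology off those of the cycle and boundary spaces. Write $Z_k := \ker(\delta_{\pi}|_{\Omega^k_f})$ and $B_k := \operatorname{im}(\delta_{\pi}|_{\Omega^{k+1}_f})$, and assign both $x_i$ and $\dif x_i$ degree $1$, so that $HP_{\Omega^m_f}(t) = \binom{4}{m}t^m(1-t)^{-4}$ as in Example \ref{ex: HP formal forms}. For this grading $\iota_{\pi}$ (which raises the polynomial degree by $2$ and lowers the form degree by $2$) and $\dif$ (which does the reverse) are both homogeneous of degree $0$, hence so is $\delta_{\pi}$; in particular $Z_k,B_k,H_k$ are graded and the two tautological sequences
\[ 0 \to Z_{k+1} \to \Omega^{k+1}_f \xrightarrow{\delta_{\pi}} B_k \to 0, \qquad 0 \to B_k \to Z_k \to H_k \to 0 \]
are degree-preserving. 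Since $\dim$ is additive (Example \ref{ex: additive function}), these give $HP_{B_k}=HP_{\Omega^{k+1}_f}-HP_{Z_{k+1}}$ and $HP_{H_k}=HP_{Z_k}-HP_{B_k}$, so that
\[ HP_{H_k}(t) = HP_{Z_k}(t) + HP_{Z_{k+1}}(t) - HP_{\Omega^{k+1}_f}(t). \]
The proposition therefore reduces to computing $HP_{Z_k}$ for $k=0,1,2,3$.

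These I would obtain from Proposition \ref{proposition: kernel differential}. The case $k=0$ is trivial: $Z_0=\Omega^0_f=\SR$, so $HP_{Z_0}=(1-t)^{-4}$. For $k=1,2,3$ the explicit description of $\ker\delta_{\pi}$ provides a degree-preserving surjection $\Psi_k : P_k \twoheadrightarrow Z_k$, where $P_k$ is a direct sum of copies of $\SR$ (each contributing $(1-t)^{-4}$) and of $\R[[f_1,f_2]]$ (each contributing $(1-t^2)^{-2}$), placed in the degree of the form it multiplies. Recording $\deg\zeta_i=\deg\dif f_i=\deg\dif\zeta_i=2$ and $\deg\epsilon_i=\deg(\zeta_1\wedge\zeta_2)=\deg(\dif f_1\wedge\dif f_2)=4$ pins down $HP_{P_k}$. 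Setting $R_k:=\ker\Psi_k$, the sequence $0 \to R_k \to P_k \to Z_k \to 0$ gives $HP_{Z_k}=HP_{P_k}-HP_{R_k}$, and everything comes down to identifying the relation modules $R_k$.

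The heart of the matter is that these relation modules are as small as the generators allow. Concretely I expect $R_1\cong\R[[f_1,f_2]]$, generated by the tautological relation $\dif h=(\partial_x h)\dif f_1+(\partial_y h)\dif f_2$; $R_3\cong\R[[f_1,f_2]]$ shifted into degree $4$, coming from $\ker(g\mapsto\dif f_1\wedge\dif f_2\wedge\dif g)=\R[[f_1,f_2]]$, which is \eqref{eq: ker deg 4}; and $R_2$ free of rank $2$ over $\R[[f_1,f_2]]$ shifted into degree $2$, generated by the identities $\dif f_1\wedge\zeta_1-\dif f_2\wedge\zeta_2=\dif(f_1\zeta_1-f_2\zeta_2)$ and $\dif f_2\wedge\zeta_1+\dif f_1\wedge\zeta_2=\dif(f_2\zeta_1+f_1\zeta_2)$ together with the surjectivity argument already used in Section \ref{section: kernel degree 2 proof}. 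This yields $HP_{R_1}=(1-t^2)^{-2}$, $HP_{R_2}=2t^2(1-t^2)^{-2}$ and $HP_{R_3}=t^4(1-t^2)^{-2}$; substituting into the master identity and simplifying the resulting rational functions produces exactly the three series in the statement. As a consistency check, $\sum_k(-1)^k HP_{H_k}(t)=1$, in agreement with $\sum_k(-1)^k HP_{\Omega^k_f}(t)=(1-t)^{-4}\sum_k(-1)^k\binom{4}{k}t^k=1$.

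The main obstacle is the exact determination of the $R_k$, that is, ruling out further relations among the generators of $\ker\delta_{\pi}$. This is genuinely delicate because the division property fails in degree $2$: by Proposition \ref{proposition: iso division group} the group $\mathcal{D}^2(\dif f_1,\dif f_2)$ is non-trivial, so the modules at play are not free and a naive generator count would overshoot. To exclude spurious relations I would reuse the toolkit of Section \ref{sec: ker poisson diff}: the Koszul exactness of $\wedge\,\dif f_1$ and $\wedge\,\dif f_2$ (Corollary \ref{lemma: exactness wedge f}), the description of $\mathcal{D}^2$ and the multiplication relations of Corollary \ref{lemma: relation second cohomology}, and the standard-basis identities of Lemmas \ref{ex: general 1} and \ref{ex: general}. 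For $R_1$, for instance, one contracts a relation $\dif g_0+\sum_i p_i\zeta_i+\sum_i g_i\dif f_i=0$ with $T_1$ and $T_2$, applies \eqref{eq: rel T, zeta contract} and Lemma \ref{lemma: dgt zero} to force $p_1=p_2=0$; the remaining relation $\dif g_0=-g_1\dif f_1-g_2\dif f_2$ gives $\dif g_0\wedge\dif f_1\wedge\dif f_2=0$, whence $g_0\in\R[[f_1,f_2]]$ by \eqref{eq: ker deg 4}, and the $\SR$-linear independence of $\dif f_1,\dif f_2$ then identifies the relation with $\R[[f_1,f_2]]$. The degree-$2$ relation module requires the same strategy but leans on the full strength of the $\mathcal{D}^2$-analysis.
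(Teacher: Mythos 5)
Your overall strategy is the paper's strategy. The paper glues your two tautological sequences into the single four-term exact sequence $0\to\ker\delta_{\pi,k+1}\to\Omega^{k+1}_f\to\ker\delta_{\pi,k}\to H_k\to 0$, reduces to computing $HP_{\ker\delta_{\pi,k}}$, and presents each kernel by the generators of Proposition \ref{proposition: kernel differential} together with an explicit relation module: this is exactly the content of Lemmas \ref{lemma: ses degree 3}, \ref{lemma: ses degree 2} and \ref{lemma: ses degree 1}. Your $R_1$ (the chain-rule relation, proved by contracting with $T_1,T_2$ and invoking Lemma \ref{lemma: dgt zero} — the paper packages this as Corollary \ref{lemma: ps are zero with d}) and your $R_3$ are precisely the paper's $\theta_1$ and $\theta_3$, and your series for $Z_0,Z_1,Z_3$ and the final simplification agree with the statement.

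The gap is the identification of $R_2$. The two identities you name,
\begin{align*}
\dif f_1\wedge\zeta_1-\dif f_2\wedge\zeta_2=\dif(f_1\zeta_1-f_2\zeta_2),\qquad
\dif f_2\wedge\zeta_1+\dif f_1\wedge\zeta_2=\dif(f_2\zeta_1+f_1\zeta_2),
\end{align*}
are not elements of $\ker\Psi_2$: the forms $\dif f_2\wedge\zeta_i$ do not occur among the generators read off from \eqref{eq: ker degree 2}, because that description was already reduced using exactly these identities — they are the reduction step inside the degree-$1$/degree-$2$ arguments of Section \ref{sec: ker poisson diff}, not relations of the reduced presentation. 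Moreover they are homogeneous of degree $4$, so even in a presentation where they do make sense (e.g.\ one that keeps all four $\dif f_j\wedge\zeta_i$ as generators) they would contribute $2t^4(1-t^2)^{-2}$, not the $2t^2(1-t^2)^{-2}$ you assert; feeding $2t^4(1-t^2)^{-2}$ into your master identity produces wrong series for $H_1$ and $H_2$, so the value you quote is doing real work and is, as written, unjustified. The correct relation module (Lemma \ref{lemma: ses degree 2}) is again of chain-rule type and is supported entirely on the $\SR$-coefficient generators: for $u_1,u_2\in\R[[f_1,f_2]]$ one has
\begin{align*}
\dif u_1\wedge\dif f_1+\dif u_2\wedge\dif f_2+\left(\partial_{f_2}u_1-\partial_{f_1}u_2\right)\dif f_1\wedge\dif f_2=0,
\end{align*}
and the tuples $\bigl((u_1,u_2),\partial_{f_2}u_1-\partial_{f_1}u_2,0,0\bigr)$ exhaust $\ker\sigma_2$. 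In particular no relation involves the $\zeta$-type generators: forcing $p=p_i=q_i=0$ in any relation is precisely where the paper wedges with $f_2\dif f_1-f_1\dif f_2$ and with $\dif f_1$, applies $\dif$, and uses Corollary \ref{lemma: ps are zero with d} — the $\mathcal{D}^2$-machinery you allude to but do not carry out. Since $u_1,u_2$ enter through $\dif g_i\wedge\dif f_i$, this kernel is isomorphic as a graded vector space to $\R[[f_1,f_2]](-2)^2$, which is the actual source of $HP_{R_2}=2t^2(1-t^2)^{-2}$. (A side remark for when you compare with the printed proof of Proposition \ref{proposition: Poincare series}: it contains typos — $HP_{\ker\delta_{\pi,3}}$ should carry $3t^4(1-t^2)^{-2}$, exactly as you computed from Lemma \ref{lemma: ses degree 1} — but the lemmas and the stated series are the consistent data.)
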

In degree $1$ we have the following statement.
\begin{lemma}\label{lemma: ses degree 3}
The following sequence is exact:
\begin{align*}
    0 \to \R[[f_1,f_2]] \xrightarrow{\theta_1} \SR(-2)^2\oplus \SR \oplus \R[[f_1,f_2]](-2)^2 \xrightarrow{\beta} \ker \delta_{\pi,1} \to 0,
\end{align*}
where the maps are given by
\begin{align*}
    \theta_1:  u \mapsto \left((-\partial_{f_1} u, - \partial_{f_2} u), u,0\right)\quad \text{ and }\quad
    \sigma_1: ((g_1,g_2),g_0,(p_1,p_2)) \mapsto \d g_0 + \sum_{i} p_i(f_1,f_2) \zeta_i + g_i \d f_i.
\end{align*}
\end{lemma}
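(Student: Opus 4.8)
The plan is to check exactness at each of the three spots, the two outer ones being formal and the middle one carrying all the weight. Surjectivity of $\sigma_1$ is immediate from the degree-one description of the kernel in \eqref{eq: ker degree 1}: every $\alpha \in \ker\delta_{\pi,1}$ is of the form $\d g_0 + \sum_i p_i\zeta_i + g_i\,\d f_i = \sigma_1\big((g_1,g_2),g_0,(p_1,p_2)\big)$. Injectivity of $\theta_1$ is trivial, since the middle $\SR$-entry of $\theta_1(u)$ equals $u$. For $\sigma_1\circ\theta_1 = 0$ I would use that $u \in \R[[f_1,f_2]]$ satisfies the chain rule $\d u = (\partial_{f_1}u)\,\d f_1 + (\partial_{f_2}u)\,\d f_2$, so that $\sigma_1(\theta_1(u)) = \d u - (\partial_{f_1}u)\,\d f_1 - (\partial_{f_2}u)\,\d f_2 = 0$. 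I would also record that, with the stated degree shifts, $\theta_1$ and $\sigma_1$ are homogeneous of degree zero, so that the sequence is one of graded modules, which is what the Hilbert--Poincar\'e computation in Proposition \ref{proposition: Poincare series} requires.

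The real content is the inclusion $\ker\sigma_1 \subseteq \im\theta_1$. So I would start from a relation
\[ \d g_0 + p_1\zeta_1 + p_2\zeta_2 + g_1\,\d f_1 + g_2\,\d f_2 = 0, \]
with $g_0,g_1,g_2 \in \SR$ and $p_1,p_2 \in \R[[f_1,f_2]]$, and aim to show $p_1 = p_2 = 0$, $g_0 \in \R[[f_1,f_2]]$, and $g_i = -\partial_{f_i}g_0$. The first move is to contract against the leaf-tangent fields $T_1,T_2$. Since $\Lie_{T_j}f_i = 0$ by \eqref{eq: relation E, T and f} the $\d f_i$-terms disappear, and the contraction relations \eqref{eq: rel T, zeta contract} turn the $\zeta_i$-terms into multiples of $f_1,f_2$, giving
\[ \Lie_{T_1}g_0 = -\tfrac14(p_1 f_1 + p_2 f_2), \qquad \Lie_{T_2}g_0 = -\tfrac14(p_1 f_2 - p_2 f_1), \]
both lying in $\R[[f_1,f_2]]$. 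Lemma \ref{lemma: dgt zero} then upgrades these to $\Lie_{T_1}g_0 = \Lie_{T_2}g_0 = 0$, so $p_1 f_1 + p_2 f_2 = 0$ and $p_1 f_2 - p_2 f_1 = 0$; combining yields $p_1(f_1^2 + f_2^2) = 0$, and since $\R[[f_1,f_2]]$ is a domain with $f_1^2 + f_2^2 \neq 0$, this forces $p_1 = p_2 = 0$.

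Once the $\zeta_i$ are gone the relation reads $\d g_0 = -g_1\,\d f_1 - g_2\,\d f_2$. Wedging with $\d f_1 \wedge \d f_2$ annihilates the right-hand side, so $\d g_0 \wedge \d f_1 \wedge \d f_2 = 0$, which is precisely $\delta_\pi(g_0\mu) = 0$ by Proposition \ref{proposition: Poisson differential}; hence $g_0 \in \R[[f_1,f_2]]$ by \eqref{eq: ker deg 4}. Then $\d g_0 = (\partial_{f_1}g_0)\,\d f_1 + (\partial_{f_2}g_0)\,\d f_2$, and substituting gives $(g_1 + \partial_{f_1}g_0)\,\d f_1 + (g_2 + \partial_{f_2}g_0)\,\d f_2 = 0$. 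Reading off the $\d x_1$- and $\d x_2$-components produces a $2\times 2$ linear system with determinant $x_1^2 + x_2^2 \neq 0$, which forces $g_i = -\partial_{f_i}g_0$. The triple is thus $\theta_1(g_0)$ with $g_0 \in \R[[f_1,f_2]]$, establishing the middle exactness.

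I expect the main obstacle to be organizational rather than computational: arranging the middle argument so that the two external inputs are applied in the correct order. One must first peel off the $\zeta_i$-terms using the $T_i$-contractions and Lemma \ref{lemma: dgt zero}, because only after they vanish does the divisibility statement from \eqref{eq: ker deg 4}, namely that $\d g_0 \wedge \d f_1 \wedge \d f_2 = 0$ forces $g_0 \in \R[[f_1,f_2]]$, become usable. The remaining steps, the contraction identities and the final $\SR$-linear independence of $\d f_1$ and $\d f_2$, are short and routine.
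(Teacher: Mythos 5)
Your proof is correct, and it follows the same overall skeleton as the paper's — kill the $\zeta_i$-terms first, then identify $g_0$ as a Casimir via \eqref{eq: ker deg 4}, then pin down $g_1,g_2$ — but it differs in two local steps. Where you contract the $1$-form relation directly with $T_1,T_2$ and invoke Lemma \ref{lemma: dgt zero}, the paper instead wedges the relation with $\d f_1$ and applies $\d$, producing a $3$-form identity that is exactly the hypothesis of Corollary \ref{lemma: ps are zero with d}; that corollary (whose own proof is precisely your $T_i$-contraction argument, one degree higher) then yields $p_1=p_2=0$ \emph{and} $g_2\in\R[[f_1,f_2]]$ in a single stroke. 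So you have effectively inlined the corollary's mechanism at the level of $1$-forms, which is more elementary and self-contained, while the paper's phrasing reuses a statement that it also needs for Lemmas \ref{lemma: ses degree 2} and \ref{lemma: ses degree 1}, keeping the three exactness proofs uniform. The finishing move also differs: the paper contracts the residual relation $\d g_0+g_1\d f_1+g_2\d f_2=0$ with $E_1,E_2$ and uses \eqref{eq: relation E, T and f}, whereas you read off the $\d x_1,\d x_2$-coefficients and solve a $2\times 2$ system with determinant $x_1^2+x_2^2$ over the domain $\SR$; both are valid, the $E_i$-contraction being coordinate-free and your version being completely bare-hands. Your closing remark about the order of operations is also accurate: since $\zeta_i\wedge\d f_1\wedge\d f_2\neq 0$, the wedge with $\d f_1\wedge\d f_2$ only becomes useful after the $p_i$ have been eliminated, which is why both proofs perform that elimination first.
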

\begin{proof}
Injectivity of $\theta_1$ follows from the definition, while surjectivity of $\sigma_1$ follows from \eqref{eq: ker degree 1}. Let
\begin{align*}
    \alpha:= \sigma_1(g_1,g_2,g_0,p_1,p_2) = 0.
\end{align*}
Wedging with $\dif f_1$, applying $\dif$ to both sides and using Corollary \ref{lemma: ps are zero with d} implies that $p_1=p_2=0$ and $g_2\in \R[[f_1,f_2]]$. Wedging with $\dif f_1\wedge \dif f_2 $ implies that $g_0\in \R[[f_1,f_2]]$ by Proposition \ref{proposition: Poisson differential} and \eqref{eq: ker deg 4}. Hence the statement follows by contraction with $E_1$ and $E_2$ by \eqref{eq: relation E, T and f}.
\end{proof}
In degree $2$ we obtain the following short exact sequences for the kernel.
\begin{lemma}\label{lemma: ses degree 2}
The following sequence is exact:
\begin{align*}
    0 \to \R[[f_1,f_2]](-2)^2 \xrightarrow{\theta_2} \SR(-2)^2\oplus \SR(-4) \oplus \R[[f_1,f_2]](-4)^3\oplus \R[[f_1,f_2]](-2)^2 \xrightarrow{\sigma_2 } \ker \delta_{\pi,2} \to 0,
\end{align*}
where the maps are given by
\begin{align*}
    &\ \theta_2: (u_1,u_2) \mapsto \left((u_1, u_2),\partial_{f_2} u_1-\partial_{f_1} u_2,0,0\right) \quad \text{ and }\\
    \sigma_2: &\ ((g_1,g_2),g,(p,p_1,p_2),(q_1,q_2)) \mapsto p\zeta_1\wedge\zeta_2+g\dif f_1\wedge\dif f_2 +\sum_{i=1}^2 \dif f_1\wedge p_i \zeta_i +\dif (q_{i}\zeta_i) +\dif g_i\wedge \dif f_i.
\end{align*}
\end{lemma}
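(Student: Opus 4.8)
The plan is to establish exactness at each of the three spots, of which two are immediate. Injectivity of $\theta_2$ is clear, since the first component of $\theta_2(u_1,u_2)=((u_1,u_2),\partial_{f_2}u_1-\partial_{f_1}u_2,0,0)$ already recovers $(u_1,u_2)$; and surjectivity of $\sigma_2$ onto $\ker\delta_{\pi,2}$ is precisely the content of \eqref{eq: ker degree 2}. The inclusion $\im\theta_2\subseteq\ker\sigma_2$ is a one-line check: for $u_i\in\R[[f_1,f_2]]$ one has $\d u_1\wedge\d f_1=-\partial_{f_2}u_1\,\d f_1\wedge\d f_2$ and $\d u_2\wedge\d f_2=\partial_{f_1}u_2\,\d f_1\wedge\d f_2$, so the $g$-entry $\partial_{f_2}u_1-\partial_{f_1}u_2$ of $\theta_2(u_1,u_2)$ is exactly what cancels $\d u_1\wedge\d f_1+\d u_2\wedge\d f_2$ under $\sigma_2$. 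Hence the real content is the reverse inclusion $\ker\sigma_2\subseteq\im\theta_2$; since $\delta_\pi$ preserves homogeneous degree we may assume the tuple $((g_1,g_2),g,(p,p_1,p_2),(q_1,q_2))$ in $\ker\sigma_2$ is homogeneous, and I will eliminate its coefficients one family at a time.

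First I would extract $p$. Wedging the relation $\sigma_2(\cdots)=0$ with $\d f_1\wedge\d f_2$ kills every term except $p\,\zeta_1\wedge\zeta_2\wedge\d f_1\wedge\d f_2$: the terms $g\,\d f_1\wedge\d f_2$, $\d f_1\wedge p_i\zeta_i$ and $\d g_i\wedge\d f_i$ already contain a factor $\d f_1$ or $\d f_2$, while $\d(q_i\zeta_i)\wedge\d f_1\wedge\d f_2=0$ because $\d q_i\in\langle\d f_1,\d f_2\rangle$ and $\beta_i\wedge\d f_1\wedge\d f_2=0$ as $\beta_i\in\mathcal{D}^2$. Since a direct computation gives $\zeta_1\wedge\zeta_2\wedge\d f_1\wedge\d f_2=(f_1^2+f_2^2)\mu$ up to a nonzero constant, we get $p(f_1^2+f_2^2)=0$ and hence $p=0$ because $\SR$ is a domain. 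Next I would apply $\d$ to the relation: the exact terms $\d(q_i\zeta_i)$ and $\d g_i\wedge\d f_i$ drop out, leaving $\d g\wedge\d f_1\wedge\d f_2-\sum_i\d f_1\wedge\d(p_i\zeta_i)=0$, which after absorbing a sign into the $p_i$ is exactly the hypothesis of Corollary \ref{lemma: ps are zero with d} with $j=1$. This yields $p_1=p_2=0$ and $g\in\R[[f_1,f_2]]$.

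The main obstacle is the remaining relation $g\,\d f_1\wedge\d f_2+\sum_i\d(q_i\zeta_i)+\d g_i\wedge\d f_i=0$, where both the $q_i$ and the $g_i$ must be controlled simultaneously. Contracting it with $T_1$ and with $T_2$ and using $\iota_{T_i}\d f_j=0$, the relations \eqref{eq: rel T, zeta contract}, and $\iota_{T_i}\beta_j=-\d\,\iota_{T_i}\zeta_j$, the $g$-term disappears and the $\d f_1$- and $\d f_2$-coefficients of each contraction express $4\,\Lie_{T_i}g_j$ as an element of $\R[[f_1,f_2]]$ (a combination of $q_1,q_2$ and their $f$-derivatives). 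Thus $\Lie_{T_i}g_j\in\R[[f_1,f_2]]$ for all $i,j$, and Lemma \ref{lemma: dgt zero} upgrades this to $\Lie_{T_i}g_j=0$. Substituting back, those four coefficient equations become a homogeneous first-order linear system in $q_1,q_2\in\R[[f_1,f_2]]$; eliminating $g_j$ it collapses to $(d+2)q_1+Rq_2=0$ and $(d+2)q_2-Rq_1=0$, where $d$ is the homogeneous degree in $(f_1,f_2)$ and $R=f_2\partial_{f_1}-f_1\partial_{f_2}$. Applying $R$ gives $R^2q_1=-(d+2)^2q_1$, which is impossible since $R^2$ acts on degree-$d$ homogeneous polynomials with eigenvalues $-k^2$, $|k|\le d$; hence $q_1=q_2=0$. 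I expect this step to be the crux, precisely because the nontriviality of $\mathcal{D}^2$ forces the $\zeta_i$-directions to couple to $\d f_1,\d f_2$.

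It remains to finish with the $g_i$. From $\Lie_{T_i}g_j=0$ and the identities $\d f_1\wedge\d f_2=-8\,\iota_{T_2}\iota_{T_1}\mu$ and $\alpha\wedge\iota_X\mu=(\iota_X\alpha)\mu$, one computes
\[ \d g_j\wedge\d f_1\wedge\d f_2=-8\big((\Lie_{T_2}g_j)\,\iota_{T_1}\mu-(\Lie_{T_1}g_j)\,\iota_{T_2}\mu\big)=0, \]
so $g_j\in\R[[f_1,f_2]]$ by \eqref{eq: ker deg 4}. The surviving relation $g\,\d f_1\wedge\d f_2+\d g_1\wedge\d f_1+\d g_2\wedge\d f_2=0$ with $g_1,g_2\in\R[[f_1,f_2]]$ then forces $g=\partial_{f_2}g_1-\partial_{f_1}g_2$, so the tuple equals $\theta_2(g_1,g_2)$ and lies in $\im\theta_2$. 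This establishes $\ker\sigma_2=\im\theta_2$ and, together with the two easy endpoints, the exactness of the sequence.
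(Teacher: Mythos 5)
Your proof is correct, and its overall skeleton (the two easy endpoints, then elimination of the coefficients of a homogeneous element of $\ker\sigma_2$) matches the paper's, but two of your elimination steps take genuinely different routes. For $p$: you wedge the relation with $\d f_1\wedge\d f_2$ and use $\zeta_1\wedge\zeta_2\wedge\d f_1\wedge\d f_2=\tfrac12(f_1^2+f_2^2)\mu$; the paper instead wedges $\d\beta$ with $f_2\d f_1-f_1\d f_2$ and invokes \eqref{eq: d zeta wedge} to get $2(f_1^2+f_2^2)(2p+f_1\partial_x p+f_2\partial_y p)\mu=0$. Your version is more direct. Both proofs then kill $p_1,p_2$ (and put $g$ in $\R[[f_1,f_2]]$) by applying $\d$ and citing Corollary \ref{lemma: ps are zero with d}. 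The real divergence is in how $q_1,q_2$ and $g_1,g_2$ are handled: the paper wedges with $\d f_1$ so as to land exactly in the hypothesis of Corollary \ref{lemma: ps are zero with d} (giving $q_i=0$ and $g_2\in\R[[f_1,f_2]]$ at once), leaving the remaining bookkeeping ($g_1\in\R[[f_1,f_2]]$ via wedging with $\d f_2$, and $g=\partial_{f_2}g_1-\partial_{f_1}g_2$) implicit in ``implies the statement''; you instead contract the $2$-form relation directly with $T_1,T_2$, invoke Lemma \ref{lemma: dgt zero}, and solve the resulting first-order system by an eigenvalue argument for $R=f_2\partial_{f_1}-f_1\partial_{f_2}$ --- in effect re-proving a $2$-form analogue of Corollary \ref{lemma: ps are zero with d}, whose own proof is exactly this contraction scheme. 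What the paper's route buys is brevity, by reusing the corollary twice; what yours buys is that $q_i$, $g_1$ and $g_2$ are controlled simultaneously and the identification of $\ker\sigma_2$ with $\im\theta_2$ is made fully explicit. A minor simplification of your crux step: your four equations just say that $f_1q_1+f_2q_2$ and $f_2q_1-f_1q_2$ are constants, hence zero by homogeneity, whence $(f_1^2+f_2^2)q_i=0$ and $q_1=q_2=0$ without any spectral analysis of $R^2$.
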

\begin{proof}
Note that $\theta_2$ is injective, $\sigma_2$ is surjective by the description of $\ker \delta_{\pi,2}$ in \eqref{eq: ker degree 2} and we have 
\begin{align*}
    \im \theta_2 \subset \ker \sigma_2 .
\end{align*}
Let us determine the kernel of $\sigma_2$. To do so, we set
\begin{align*}
    \beta:= \sigma_2((g_1,g_2),g,(p,p_1,p_2),(q_1,q_2)).
\end{align*}
We first note that due to \eqref{eq: d zeta wedge}, \eqref{eq: relation E, T and f} and \eqref{eq: contraction vs wedge} we have
\begin{align*}
    \dif \beta \wedge (f_2\dif f_1 -f_1\dif f_2)=2(f_1^2+f_2^2)(2p +f_1\partial_x p +f_2\partial_y p)\mu,
\end{align*}
implying that $p=0$ if $\beta=0$. Wedging with $\dif f_1$ and applying Corollary \ref{lemma: ps are zero with d} implies that $q_1=q_2=0$ and $g_2\in \R[[f_1,f_2]]$. Applying $\dif$ and using again Corollary \ref{lemma: ps are zero with d} implies the statement.
\end{proof}
In degree $3$ we obtain the following short exact sequence for the kernel of the differential.
\begin{lemma}\label{lemma: ses degree 1}
The following sequence is exact:
\begin{align*}
    0 \to \R[[f_1,f_2]](-4) \xrightarrow{\theta_3} \SR(-4) \oplus \R[[ f_1,f_2]](-4)^4  \xrightarrow{\sigma_3} \ker \delta_{\pi,3} \to 0,
\end{align*}
where the maps are given by:
\begin{align*}
    \theta_3: u \mapsto (u,0),\quad \text{ and }\quad  \sigma_3 :(g,(q_1,q_2,p_1,p_2)) \mapsto  \dif f_1\wedge\dif f_2\wedge \dif g + \sum_{i=1}^2 q_i \epsilon_i+ \dif f_1\wedge \dif (p_i  \zeta_i).
\end{align*}
\end{lemma}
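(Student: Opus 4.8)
The plan is to prove exactness of the sequence
\[
0 \to \R[[f_1,f_2]](-4) \xrightarrow{\theta_3} \SR(-4) \oplus \R[[f_1,f_2]](-4)^4 \xrightarrow{\sigma_3} \ker \delta_{\pi,3} \to 0
\]
by verifying the three standard conditions: injectivity of $\theta_3$, surjectivity of $\sigma_3$, and exactness at the middle term, i.e. $\ker\sigma_3 = \im\theta_3$. The first two are immediate: injectivity of $\theta_3: u\mapsto (u,0)$ is clear, and surjectivity of $\sigma_3$ is precisely the content of the degree-$3$ description \eqref{eq: ker degree 3} in Proposition \ref{proposition: kernel differential}, since every element of $\ker\delta_{\pi,3}$ is by that result of the form $\dif f_1\wedge\dif f_2\wedge\dif g + \sum_i q_i\epsilon_i + \dif f_1\wedge\dif(p_i\zeta_i)$.

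**The main work: the kernel of $\sigma_3$.**

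The inclusion $\im\theta_3\subset\ker\sigma_3$ amounts to checking that for $u\in\R[[f_1,f_2]]$ the element $\dif f_1\wedge\dif f_2\wedge\dif u$ vanishes; this is clear since $\dif u$ is a combination of $\dif f_1$ and $\dif f_2$ when $u$ is a power series in $f_1,f_2$. The substance of the proof is the reverse inclusion. So I would suppose
\[
0 = \sigma_3(g,(q_1,q_2,p_1,p_2)) = \dif f_1\wedge\dif f_2\wedge\dif g + \sum_{i=1}^2 q_i\epsilon_i + \dif f_1\wedge\dif(p_i\zeta_i)
\]
and extract relations by applying the operations already used in the earlier lemmas. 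The natural first move is to take $\dif$ of the whole expression; since $\dif(\dif f_1\wedge\dif f_2\wedge\dif g)=0$ and $\dif\dif(p_i\zeta_i)=0$, only the $\sum q_i\epsilon_i$ term survives, and using $\dif\epsilon_i=\dif\star E_i$ together with $\delta(E_1)=2,\ \delta(E_2)=0$ (equivalently the divergence relations from \eqref{eq: relation E, T and f}) one finds that $\dif(\sum q_i\epsilon_i)$ is a multiple of $\mu$, forcing a first-order PDE on $(q_1,q_2)$. I expect this to give $q_1=q_2=0$, exactly as the analogous computation $0=\dif q\wedge\epsilon_1+2q\mu$ did in the degree-$2$ proof.

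**Finishing and the expected obstacle.**

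Once $q_1=q_2=0$, the relation reduces to $\dif f_1\wedge\dif f_2\wedge\dif g + \sum_i\dif f_1\wedge\dif(p_i\zeta_i)=0$, which I would rewrite as $\dif f_1\wedge\big(\dif f_2\wedge\dif g + \sum_i\dif(p_i\zeta_i)\big)=0$. This is precisely the hypothesis of Corollary \ref{lemma: ps are zero with d} (with $j=1$, after absorbing $\dif f_2\wedge\dif g$ appropriately), whose conclusion forces $p_1=p_2=0$ and $g\in\R[[f_1,f_2]]$. Thus the only surviving datum is $g\in\R[[f_1,f_2]]$ with all other inputs zero, which is exactly $\im\theta_3$, establishing exactness at the middle. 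The main obstacle I anticipate is the bookkeeping in applying Corollary \ref{lemma: ps are zero with d}: that corollary is stated for the configuration $\dif g\wedge\dif f_1\wedge\dif f_2 + \sum_i\dif(p_i\zeta_i)\wedge\dif f_j=0$, so I must carefully match signs and the placement of the $\dif f_1$ factor, and confirm that the $q_i=0$ step genuinely decouples before invoking it. The grading shifts in $\theta_3$ and $\sigma_3$ (all the $(-4)$'s, reflecting that $\epsilon_i$, $\dif f_1\wedge\dif f_2\wedge\dif g$ and $\dif f_1\wedge\dif(p_i\zeta_i)$ each have coefficients of homogeneous degree matching a degree-$4$ shift) should be checked to be consistent, but this is routine once the algebraic exactness is in hand.
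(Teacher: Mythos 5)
Your skeleton — injectivity of $\theta_3$, surjectivity of $\sigma_3$ from \eqref{eq: ker degree 3}, and finishing via Corollary \ref{lemma: ps are zero with d} once $q_1=q_2=0$ is known — is the same as the paper's, and your final reduction to the $j=1$ case of that corollary is correct. However, the step where you claim $q_1=q_2=0$ contains a genuine gap. Applying $\dif$ to $\sigma_3(g,(q_1,q_2,p_1,p_2))=0$ does kill the $g$- and $p_i$-terms, but what remains is the \emph{single} scalar equation
\begin{equation*}
    \left(2+f_1\partial_x+f_2\partial_y\right)q_1 + \left(f_2\partial_x-f_1\partial_y\right)q_2 = 0,
\end{equation*}
obtained from $\dif \epsilon_1 = 2\mu$, $\dif\epsilon_2=0$ and $\dif f_k\wedge \epsilon_j = (\Lie_{E_j}f_k)\mu$ via \eqref{eq: relation E, T and f}. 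Your analogy with the degree-$2$ computation breaks down here: in that computation only $\epsilon_1$ appears and the operator $2+f_1\partial_x+f_2\partial_y$ is injective on $\R[[f_1,f_2]]$ (it multiplies each monomial of total degree $a+b$ by $2+a+b$), but in the present situation the rotational operator $f_2\partial_x-f_1\partial_y$ acting on $q_2$ annihilates every radial series. For instance $q_1=0$, $q_2=(f_1^2+f_2^2)^k$ solves the equation; in fact for \emph{every} $q_2$ there is a unique $q_1$ solving it, since $2+f_1\partial_x+f_2\partial_y$ is invertible. So $\dif\gamma=0$ alone can never yield $q_1=q_2=0$: by taking $\dif$ you have discarded most of the content of the $3$-form identity $\gamma=0$.

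The paper extracts $q_1$ and $q_2$ from the full identity by wedging with suitable $1$-forms instead. Using \eqref{eq: relation E, T and f}, \eqref{eq: contraction vs wedge}, the fact that $\dif p_i$ lies in the span of $\dif f_1,\dif f_2$, and the relation $\dif\zeta_i\wedge\dif f_1\wedge\dif f_2=0$, one checks that both the $\dif f_1\wedge\dif f_2\wedge\dif g$ term and the $\dif f_1\wedge\dif(p_i\zeta_i)$ terms are annihilated by these wedges, so that
\begin{equation*}
    (f_1\dif f_1+f_2\dif f_2)\wedge\gamma = 2(f_1^2+f_2^2)\,q_1\,\mu
    \qquad\text{ and }\qquad
    (f_2\dif f_1-f_1\dif f_2)\wedge\gamma = 2(f_1^2+f_2^2)\,q_2\,\mu .
\end{equation*}
Since $\SR$ is an integral domain and $f_1^2+f_2^2\neq 0$, the hypothesis $\gamma=0$ forces $q_1=q_2=0$, after which your concluding appeal to Corollary \ref{lemma: ps are zero with d} goes through verbatim. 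To repair your proof, replace the ``take $\dif$'' step by this wedging argument (or by any other device that uses the vanishing of $\gamma$ itself rather than only of $\dif\gamma$).
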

\begin{proof}
The map $\theta_3$ is injective by definition and surjectivity of $\sigma_3 $ follows from \eqref{eq: ker degree 3}. To show that
\begin{align*}
    \im \theta_3=\ker \sigma_3
\end{align*}
we define 
\begin{align*}
    \gamma := \sigma_3 (g,(q_1,q_2,p_1,p_2)).
\end{align*}
Note that by \eqref{eq: relation E, T and f} and \eqref{eq: contraction vs wedge} we have the relations
\begin{align*}
    (f_1\dif f_1 +f_2\dif f_2)\wedge \gamma =2(f_1^2+f_2^2)q_1\mu, \quad \text{ and } \quad (f_2\dif f_1 -f_1\dif f_2)\wedge \gamma =2(f_1^2+f_2^2)q_2\mu.
\end{align*}
Hence $\gamma =0$ implies that $q_1=q_2=0$. Therefore Corollary \ref{lemma: ps are zero with d} implies the statement.
\end{proof}

Now we are ready to proof Proposition \ref{proposition: Poincare series}.
\begin{proof}[Proof of Proposition \ref{proposition: Poincare series}]
We use the exact sequence of degree preserving maps
\begin{align}\label{eq: trivial es}
    0\to \ker \delta_{\pi,i+1} \hookrightarrow\Omega^{i+1}_f\xrightarrow{\delta_{\pi,i+1}}\ker \delta_{\pi,i}\to H_i \to 0.
\end{align}
Let us first compute the Hilbert-Poincare series of the kernels of $\delta_{\pi,i}$. Therefore we note that
\begin{align*}
    HP_{\SR(-n)}=\frac{t^n}{(1-t)^4} \quad \text{ and }\quad HP_{\R[[f_1,f_2]](-n)}=\frac{t^n}{(1-t^2)^2}
\end{align*}
for all $n\in \N_0$. For any two graded vector spaces $U$ and $V$ we have that
\begin{align*}
    HP_{U\oplus V}=HP_U+HP_V,
\end{align*}
and for any exact sequence of graded vector spaces
\begin{align*}
    0\to V_0\to \dots \to V_k \to 0
\end{align*}
with degree preserving maps in between, that
\begin{align*}
    HP_{V_k}=\sum_{i=0}^{k-1}(-1)^{k-1-i}HP_{V_i}.
\end{align*}
From Lemma \ref{lemma: ses degree 1} we obtain
\begin{align*}
    HP_{\ker \delta_{\pi,3}}(t)=\frac{t^4}{(1-t^2)^2}+\frac{t^4}{(1-t)^4}.
\end{align*}
Similarly, Lemma \ref{lemma: ses degree 2} and Lemma \ref{lemma: ses degree 3} imply
\begin{align*}
    HP_{\ker \delta_{\pi,2}}(t)=\frac{3t^4}{(1-t^2)^2}+\frac{t^4+2t^2}{(1-t)^4}\quad \text{ and }\quad HP_{\ker \delta_{\pi,1}}(t)=\frac{2t^2-1}{(1-t^2)^2}+\frac{2t^2+1}{(1-t)^4}.
\end{align*}
Additionally, note that by Example \ref{ex: HP formal forms} we have
\begin{align*}
    HP_{\ker \delta_{\pi,0}}(t)=HP_{\Omega^0_f}(t)=\frac{1}{(1-t)^4}, &\ \qquad HP_{\Omega^1_f}(t)=\frac{4t}{(1-t)^4}, \\
    HP_{\Omega^2_f}(t)=\frac{6t^2}{(1-t)^4},&\ \qquad HP_{\Omega^3_f}(t)=\frac{4t^3}{(1-t)^4}.
\end{align*}
Hence \eqref{eq: trivial es} for $i=2$ implies that
\begin{align*}
    HP_{H_2}(t)=&\ HP_{\ker \delta_{\pi,2}}(t)+HP_{\ker \delta_{\pi,3}}(t)-HP_{\Omega^3_f}(t)\\
    =&\ \frac{3t^4}{(1-t^2)^2}+\frac{t^4+2t^2}{(1-t)^4}+\frac{t^4}{(1-t^2)^2}+\frac{t^4}{(1-t)^4}-\frac{4t^3}{(1-t)^4}\\
    =&\ \frac{4t^4}{(1-t^2)^2}+2t^2\frac{(1-t)^2}{(1-t)^4}\ =\ 2t^2\frac{4t^2+4t+1}{(1-t^2)^2}.
\end{align*}
For $i=1$ we get
\begin{align*}
    HP_{H_1}(t)=&\ HP_{\ker \delta_{\pi,1}}(t)+HP_{\ker \delta_{\pi,2}}(t)-HP_{\Omega^2_f}(t)\\
    =&\ \frac{2t^2-1}{(1-t^2)^2}+\frac{2t^2+1}{(1-t)^4}+\frac{3t^4}{(1-t^2)^2}+\frac{t^4+2t^2}{(1-t)^4}-\frac{6t^2}{(1-t)^4}\\
    =&\ \frac{3t^4+2t^2-1}{(1-t^2)^2}+\frac{(1-t)^2(1+t)^2}{(1-t)^4}\ =\ 4t\frac{t^3+t^2+2t+1}{(1-t^2)^2}.
\end{align*}
Finally, for $i=0$ we compute
\begin{align*}
    HP_{H_0}(t)=&\ HP_{\ker \delta_{\pi,0}}(t)+HP_{\ker \delta_{\pi,1}}(t)-HP_{\Omega^1_f}(t)\\
    =&\ \frac{1}{(1-t)^4}+\frac{2t^2-1}{(1-t^2)^2}+\frac{2t^2+1}{(1-t)^4}-\frac{4t}{(1-t)^4}\\
    =&\ \frac{2t^2-1}{(1-t^2)^2}+2\frac{(1-t)^2}{(1-t)^4}\ =\ \frac{4t^2+4t+1}{(1-t^2)^2}.
\end{align*}
\end{proof}

\section{Proof of the main theorem}\label{sec:proof main thm}

In this section we prove Theorem \ref{main theorem}. In each degree the strategy of the proof is the same: by Proposition \ref{proposition: Poincare series} the given set of generators has the right dimension. Hence it is enough to show that none of the elements generated by the given module are in the image of the Poisson differential.

\subsection{Degree 0} For the proof in degree $0$ we need the following auxiliary statement.
\begin{lemma}\label{lemma: degree 4 proof}
Let $g\in \R[[f_1,f_2]]$, $(c,p,(q_1,q_2))\in \R\oplus \R[[x_2]] \oplus \R[[x_2,x_4]]^2$ and $\alpha \in \Omega^1_f$ such that:
\begin{align*}
    g\cdot \mu =&\, \dif f_1 \wedge \dif I(c,p,(q_1,q_2)) +\dif f_1\wedge \dif f_2 \wedge \dif \alpha .
\end{align*}
Then $g=0$ and the same statement holds if we exchange the roles of $\dif f_1$ and $\dif f_2$.    
\end{lemma}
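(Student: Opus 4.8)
The plan is to remove the $\dif f_1\wedge\dif I$ term by the division-group results, reduce to the model identity $g\cdot\mu=\dif f_1\wedge\dif f_2\wedge\dif\tilde\alpha$, and then force $g=0$ by an explicit degreewise computation. Since $\delta_\pi$ preserves homogeneous degree I may assume $g$ is homogeneous; as $g\in\R[[f_1,f_2]]$ its degree is even, say $2N$, and all remaining data are homogeneous of the matching degrees. The starting observation is that $\beta_1=\dif\zeta_1$ and $\beta_2=\dif\zeta_2$ are closed, so $\dif I(c,p,(q_1,q_2))=\dif(px_1+q_1x_3+q_2)\wedge\beta_2$ and the constant $c$ disappears. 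Because $\dif f_1\wedge\dif f_2\wedge\dif\alpha$ has coefficients in $\SJ$, reducing the identity modulo $\SJ\cdot\mu$ (using the standard basis of \eqref{eq: ideal df12}) produces exactly the hypothesis of Lemma \ref{ex: general}. That lemma then yields $g\in(f_1^2+f_2^2)\R[[f_1,f_2]]$ together with $x_2p+x_4q_1,\,q_2\in\R[[x_2^2+x_4^2]]$.

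Next I would eliminate $I$ altogether. The relations just obtained are precisely the hypothesis of the last part of Corollary \ref{lemma: relation second cohomology}, which gives $[(x_1p+x_3q_1+q_2)\beta_2]=[\psi\beta_2]$ in $\mathcal{D}^2(\dif f_1,\dif f_2)$ with $\psi=c_1f_1^{N-1}f_2+c_2f_1^{N}\in\R[[f_1,f_2]]$. As $\beta_i=\dif\zeta_i$ and $\dif\psi\wedge\zeta_2\in\dif f_1\wedge\Omega^1_f+\dif f_2\wedge\Omega^1_f$, this can be rewritten as
\begin{equation*}
    I(c,p,(q_1,q_2))=\dif\big(c\zeta_1+\psi\zeta_2\big)+\dif f_1\wedge\eta_1+\dif f_2\wedge\eta_2
\end{equation*}
for some $\eta_1,\eta_2\in\Omega^1_f$. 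Forming $\dif f_1\wedge\dif(\cdot)$ annihilates the exact summand and the $\dif f_1$-summand and turns the last one into $-\dif f_1\wedge\dif f_2\wedge\dif\eta_2$; substituting, the hypothesis collapses to $g\cdot\mu=\dif f_1\wedge\dif f_2\wedge\dif\tilde\alpha$ with $\tilde\alpha:=\alpha-\eta_2$.

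The final step, showing that this last identity forces $g=0$, is the real obstacle. Conceptually it says $g=\star^{-1}(\dif\tilde\alpha\wedge\dif f_1\wedge\dif f_2)=\delta_{\pi,1}\tilde\alpha$, i.e.\ $g$ is a Casimir lying in $\operatorname{im}\delta_{\pi,1}$; equivalently, putting $\gamma:=\dif f_1\wedge\dif f_2\wedge\tilde\alpha$ and using $\iota_{\star^{-1}\gamma}\dif f_i=0$ (from \eqref{eq: canonical iso}) and $\iota_\pi\mu=\dif f_1\wedge\dif f_2$, Proposition \ref{proposition: Poisson differential} gives $\delta_\pi\gamma=g\,\dif f_1\wedge\dif f_2$, exhibiting $g\,\dif f_1\wedge\dif f_2$ as a $\delta_\pi$-boundary. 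I cannot close the argument by citing the structure of $H_0$ or $H_2$, since those are precisely what the main theorem is establishing, so $g=0$ has to be shown directly. For this I would run an explicit computation in fixed homogeneous degree, parallel to Step 1 of the degree $3$ proof: writing $\tilde\alpha$ with coefficients homogeneous of odd degree $2N-1$, expanding $\dif\tilde\alpha\wedge\dif f_1\wedge\dif f_2$ through \eqref{eq: omega wedge}, and comparing the coefficients of the resulting monomials with those of the Casimir $g\cdot\mu$. The key is that $\dif\tilde\alpha$ is forced to be exact, and the ensuing binomial recursions among the coefficients of $\tilde\alpha$ leave no room for a nonzero Casimir component; I expect this coefficient bookkeeping to be the only substantial difficulty. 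The version with $\dif f_1$ and $\dif f_2$ interchanged then follows by the symmetric computation, exactly as in Lemma \ref{ex: general}.
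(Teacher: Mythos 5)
Your reductions up to the identity $g\cdot\mu=\dif f_1\wedge\dif f_2\wedge\dif\wtd{\alpha}$ are correct and coincide with the first half of the paper's argument: one reduces modulo $\SJ\cdot\mu$ to invoke Lemma \ref{ex: general}, and then uses Corollary \ref{lemma: relation second cohomology} together with $\beta_i=\dif\zeta_i$ to absorb the $\dif f_1\wedge\dif I$ term. The genuine gap is your final step, which you explicitly leave as an expectation (``I would run an explicit computation \ldots\ I expect this coefficient bookkeeping to be the only substantial difficulty''). That step is not bookkeeping on the order of Step 1 of the degree-$3$ proof: there the unknown Casimir data had already been normalized down to two real constants $(c_1,c_2)$, whereas here $g$ ranges over all Casimirs of the given degree, and the assertion ``$g\in\R[[f_1,f_2]]$ and $g\mu=\dif f_1\wedge\dif f_2\wedge\dif\wtd{\alpha}$ imply $g=0$'' is precisely the statement that Casimirs inject into $H_0$, i.e.\ the hard core of the degree-$0$ computation. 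You correctly note that you may not quote the main theorem at this point, but you then supply no actual mechanism for proving this claim.

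The paper's mechanism, which your outline misses, is an induction on the homogeneous degree of $g$ in which the \emph{full} statement of the lemma --- including the $\dif f_1\wedge\dif I$ term --- is the induction hypothesis. Concretely: Lemma \ref{ex: general} gives $g=(f_1^2+f_2^2)\wtd{g}$, and the identity $(f_1^2+f_2^2)\mu=2\dif f_1\wedge\dif f_2\wedge\zeta_1\wedge\zeta_2$ turns the reduced equation into $\dif f_1\wedge\dif f_2\wedge\bigl(2\wtd{g}\,\zeta_1\wedge\zeta_2-\dif\alpha\bigr)=0$; Proposition \ref{proposition: iso division group} then yields a decomposition $2\wtd{g}\,\zeta_1\wedge\zeta_2-\dif\alpha=I(0,\wtd{p},(\wtd{q}_1,\wtd{q}_2))+\sum_i\alpha_i\wedge\dif f_i$, and applying $\dif$ and wedging with $\dif f_1$ produces $2\bigl((f_1^2+f_2^2)\partial_y\wtd{g}+f_2\wtd{g}\bigr)\mu=\dif f_1\wedge\dif I(0,\wtd{p},(\wtd{q}_1,\wtd{q}_2))+\dif f_1\wedge\dif f_2\wedge\dif\alpha_2$ --- an equation of the \emph{original} form, with an $I$ term reappearing, in degree two lower. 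The induction hypothesis then forces $(f_1^2+f_2^2)\partial_y\wtd{g}+f_2\wtd{g}=0$, hence $\wtd{g}=0$. The reappearance of the $I$ term is exactly why your strategy of first eliminating $I$ and then attacking the special case $g\mu=\dif f_1\wedge\dif f_2\wedge\dif\wtd{\alpha}$ in isolation cannot close: the natural degree-lowering reduction of the special case leads back to the general one, so the lemma must be proved in its stated generality by induction, not by a one-shot coefficient comparison.
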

\begin{proof}
We prove the statement for $g$ being homogeneous of degree $2n$ by induction on $n$.

\noindent\underline{$n=0 ,1$\,:} If $g\in \R$ then the statement holds by a degree count of the coefficients. If $g$ is homogeneous of degree $1$ then the statement follows from Lemma \ref{ex: general}.

\noindent\underline{Induction step:} Assume that $g\in \R[[f_1,f_2]]$ is homogeneous of degree $2n$. Then Lemma \ref{ex: general} implies 
\[ x_2p+x_4q_1, q_2\in \R\cdot (x_2^2+x_4^2)^{n-1}, \qquad \text{ and } \qquad g\in (f_1^2+f_2^2)\R[[f_1,f_2]].\]
Hence, by Corollary \ref{lemma: relation second cohomology} we have:
\begin{align*}
    I(c,p,(q_1,q_2))= (c_1f_1^{n-1}+c_2f_1^{n-2}f_2 )\beta_2 +\sum_{i=1}^{2}\dif f_i \wedge \alpha_i,
\end{align*}
for some $c_i \in \R$ and $\alpha_i \in \Omega^1_f$, with $i=1,2$. Therefore, writing $g$ as $(f_1^2+f_2^2)\wtd{g}$ and replacing $\alpha$ by $\alpha-\alpha_2$, again denoted by $\alpha$, the original equation becomes
\begin{align*}
    (f_1^2+f_2^2)\wtd{g}\cdot \mu =&\, \dif f_1\wedge \dif f_2 \wedge \dif \alpha.
\end{align*}
Using \eqref{eq: relation E, T and f}, \eqref{eq: contraction vs wedge} and \eqref{eq: relation zeta f E} we note that
\begin{align*}
    2g\dif f_1\wedge\dif f_2\wedge \zeta_1\wedge\zeta_2 = (f_1^2+f_2^2)g\cdot \mu .
\end{align*}
Hence combining the two equations above, we obtain by Proposition \ref{proposition: iso division group} that
\begin{align*}
    2\wtd{g}\zeta_1\wedge \zeta_2 -\dif \alpha = I(0,\wtd{p},(\wtd{q}_1,\wtd{q}_2)) +\sum_{i=1}^2\alpha_i\wedge \dif f_i
\end{align*}
for some $(\wtd{p},(\wtd{q}_1,\wtd{q}_2))\in \R[[x_2]]\times \R[[x_2,x_4]]^2$ and $\alpha_i\in \Omega^1_f$. Note that the homogeneous degrees of $\wtd{p},\wtd{q}_1,\wtd{q}_2$ and $\alpha_i$ are two smaller than those of $p,q_1,q_2$ and $\alpha$, respectively. Now taking the de-Rham differential and wedging with $\dif f_1$ implies by \eqref{eq: relation E, T and f}, \eqref{eq: contraction vs wedge} and \eqref{eq: relation zeta f E} that
\begin{align*}
    2((f_1^2+f_2)^2\partial_{y}\wtd{g} +f_2\wtd{g})\cdot \mu = \dif f_1\wedge \dif I(0,\wtd{p},(\wtd{q}_1,\wtd{q}_2))+\dif f_1\wedge \dif f_2\wedge \dif \alpha_2.
\end{align*}
Therefore, the induction hypothesis implies that
\begin{align*}
    (f_1^2+f_2)^2\partial_{y}\wtd{g} +f_2\wtd{g}=0,
\end{align*}
and hence $\wtd{g}=0$ concluding the induction. The proof for $\dif f_2$ follows along the same lines.
\end{proof}

Now we are ready to prove Theorem \ref{main theorem} for degree $0$. We distinguish between even and odd homogeneous degrees. 

\noindent\underline{The odd degree:} The zeroth Poisson homology group is realized by the free $\R[[x_2^2,x_4^2]]$-module generated by the linear monomials. This follows from the description of a boundary by Proposition \ref{proposition: Poisson differential} and Lemma \ref{ex: general 1}.

\noindent\underline{The even degree:}
By a similar argument as in the odd degree we obtain that the coefficients in even degree are contained in the ideal $\SI$ if and only if they are of the form
\begin{align*}
    (f_1^2+f_2^2)p
\end{align*}
where $p\in \R[[f_1,f_2]]$. Hence we only need to check if there exists an $\alpha\in \Omega^1(\SR)$ such that
\begin{align*}
    (f_1^2+f_2^2)p\mu=\dif f_1\wedge \dif f_2\wedge \dif \alpha
\end{align*}
We show that such an $\alpha$ can only exist if $p=0$. This follows from Lemma \ref{lemma: degree 4 proof}.

\subsection{Degree 1} We first prove another auxiliary lemma.
\begin{lemma}\label{lemma: vector field normal form}
Let $g\in \R[[f_1,f_2]]$ and $a_i \in \R[[x_2^2,x_4]]$ and $\gamma \in\Omega^3_f$ such that
\begin{equation*}
    0= \left(g-\star^{-1}(\d \gamma)\right)\dif f_1\wedge \dif f_2 +\dif \left( \left(\sum_{i=1}^4 a_ix_i-\iota_{\star^{-1} \gamma}(\d f_2)\right)\dif f_1+\iota_{\star^{-1}\gamma}(\d f_1)\dif f_2\right). 
\end{equation*}
Then $g=a_1=a_2=a_3=a_4=0$ and $\gamma \in \ker \delta_{\pi}$. 
\end{lemma}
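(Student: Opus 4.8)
The plan is to first recognize the hypothesis as a disguised coboundary equation, and then to show that a coboundary of this special shape must be trivial. Set $X:=\star^{-1}\gamma\in\X^1_f$, so that $\star^{-1}(\dif\gamma)$ equals the divergence of $X$ and $\iota_{\star^{-1}\gamma}(\dif f_j)=\Lie_X f_j$. Expanding the outer $\dif$ in the hypothesis and comparing term by term with the degree-$3$ formula for $\delta_\pi$ in Proposition~\ref{proposition: Poisson differential}---whose three ingredients are exactly $\star^{-1}(\dif\gamma)$, $\iota_{\star^{-1}\gamma}(\dif f_1)$ and $\iota_{\star^{-1}\gamma}(\dif f_2)$---a short sign check shows that the hypothesis is equivalent to
\begin{equation*}
    \delta_\pi\gamma = g\,\dif f_1\wedge\dif f_2 + \dif\Big(\sum_{i=1}^4 a_ix_i\Big)\wedge\dif f_1 .
\end{equation*}
Once $g=0$ and $a_1=\dots=a_4=0$ are established the right-hand side vanishes, hence $\delta_\pi\gamma=0$ and $\gamma\in\ker\delta_\pi$; so the whole statement reduces to proving that this particular $2$-form lies in $\im\delta_{\pi,3}$ only when $g=0$ and all $a_i=0$.

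Next I would extract two basic constraints by wedging the displayed identity with $\dif f_1$ and $\dif f_2$. Wedging with $\dif f_1$ annihilates the right-hand side and leaves $\dif(\Lie_X f_1)\wedge\dif f_1\wedge\dif f_2=0$, so by the top-degree kernel description \eqref{eq: ker deg 4} we obtain $\Lie_X f_1\in\R[[f_1,f_2]]$. Wedging with $\dif f_2$ gives $\dif\big(\Lie_X f_2-\sum_{i=1}^4 a_ix_i\big)\wedge\dif f_1\wedge\dif f_2=0$, hence $\Lie_X f_2-\sum_{i=1}^4 a_ix_i\in\R[[f_1,f_2]]$. Writing $P_1:=\Lie_X f_1$ and $P_2:=\Lie_X f_2-\sum_{i=1}^4 a_ix_i$ and substituting back, the $\dif f_1\wedge\dif f_2$-coefficients match to give $\star^{-1}(\dif\gamma)=g+\partial_{f_1}P_1+\partial_{f_2}P_2$; in particular $g\in\R[[f_1,f_2]]$, and by the relation $\R[[f_1,f_2]]\cap\SM=\{0\}$ from Lemma~\ref{ex: general 1} the sum $\sum_{i=1}^4 a_ix_i$ is exactly the $\SM$-component of $\Lie_X f_2$. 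Thus everything has become a statement purely about $X$: if $\Lie_X f_1\in\R[[f_1,f_2]]$, then the $\SM$-part of $\Lie_X f_2$ vanishes and $g=0$.

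To prove this last statement I would induct on homogeneous degree, in the spirit of the degree-$0$ auxiliary Lemma~\ref{lemma: degree 4 proof}, which is already available. Since $\delta_\pi$ preserves homogeneous degree I may assume all data are homogeneous, and as in the degree-$0$ argument I treat even and odd degrees separately; in odd degree the term $g\,\dif f_1\wedge\dif f_2$ cannot occur for degree reasons, so only the $\SM$-part must be killed. To feed the induction I would wedge the reduced identity with $\zeta_1\wedge\zeta_2$ and use $2\,\dif f_1\wedge\dif f_2\wedge\zeta_1\wedge\zeta_2=(f_1^2+f_2^2)\mu$, turning the $g$-term into $\tfrac12(f_1^2+f_2^2)g\,\mu$; Lemma~\ref{lemma: degree 4 proof} then forces $g\in(f_1^2+f_2^2)\R[[f_1,f_2]]$ and, after peeling off the factor and dropping the degree, ultimately $g=0$. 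The $\SM$-part is controlled through the division-group normal form of Proposition~\ref{proposition: iso division group} together with Corollary~\ref{lemma: relation second cohomology}, which describe $\mathcal{D}^2(\dif f_1,\dif f_2)$ in terms of $\beta_1,\beta_2$ and again allow one to extract an $(f_1^2+f_2^2)$ factor. I expect the genuine difficulty to be exactly this inductive core: the two simultaneous requirements on $X$---that $\Lie_X f_1$ be a power series in $f_1,f_2$ while the $\SM$-component of $\Lie_X f_2$ is prescribed---must be shown incompatible with a nonzero $\SM$-part or a nonzero $g$, and disentangling them rests on the explicit normal-form computations modulo $\SJ$ of Lemma~\ref{ex: general} and Lemma~\ref{ex: general 1}, where the non-vanishing of $\mathcal{D}^2$ makes itself felt.
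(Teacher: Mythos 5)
Your opening moves are correct and in fact coincide with how the paper begins: the hypothesis is equivalent to the coboundary equation $\delta_\pi\gamma = g\,\d f_1\wedge\d f_2+\d\big(\sum_i a_ix_i\big)\wedge\d f_1$, and wedging with $\d f_1$ resp.\ $\d f_2$ together with \eqref{eq: ker deg 4} gives $\iota_{\star^{-1}\gamma}(\d f_1)\in\R[[f_1,f_2]]$ and $\iota_{\star^{-1}\gamma}(\d f_2)-\sum_i a_ix_i\in\R[[f_1,f_2]]$. From that point on, however, you only name the tools (Lemma \ref{ex: general}, Lemma \ref{ex: general 1}, Proposition \ref{proposition: iso division group}, Lemma \ref{lemma: degree 4 proof}) and explicitly defer the ``inductive core''; the argument that actually forces $a_i=0$ and $g=0$ is never carried out, and that is where the entire content of the lemma lies. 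The missing mechanism is a structural decomposition of $\gamma$: since $\iota_{\star^{-1}\gamma}(\d f_1)\in\langle f_1,f_2\rangle_{\R[[f_1,f_2]]}$ and $\iota_{E_1}\d f_1=f_1$, $\iota_{E_2}\d f_1=f_2$, one subtracts $r_1\epsilon_1+r_2\epsilon_2$ so that the remainder is annihilated by $\d f_1\wedge\cdot\,$, and Koszul exactness (Corollary \ref{lemma: exactness wedge f}) then yields $\gamma=r_1\epsilon_1+r_2\epsilon_2+\d f_1\wedge\beta$. Only after this step is $\iota_{\star^{-1}\gamma}(\d f_2)$ congruent, modulo $\R[[f_1,f_2]]$, to an element whose coefficients lie in the ideal $\SJ$, which is exactly what Lemma \ref{ex: general 1} needs in order to give $a_i=0$ and $\beta\wedge\d f_1\wedge\d f_2\in(f_1^2+f_2^2)\R[[f_1,f_2]]\mu$; and only after the further splitting $\beta=r\zeta_1\wedge\zeta_2+\wtd{\beta}+\d f_1\wedge\alpha_1+\d f_2\wedge\alpha_2$ with $[\wtd{\beta}]\in\SD^2(\d f_1,\d f_2)$ does the remaining equation take the precise shape $g\mu=\d f_1\wedge\d\wtd{\beta}+\d f_1\wedge\d f_2\wedge\d\alpha_2$ demanded by Lemma \ref{lemma: degree 4 proof}, which finally gives $g=0$. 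Your reduction to ``a statement purely about $X$'' stops short of this decomposition, and without it neither Lemma \ref{ex: general 1} nor Lemma \ref{lemma: degree 4 proof} can be invoked.

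Two of the concrete steps you do sketch for the core are moreover flawed. Wedging the identity with $\zeta_1\wedge\zeta_2$ does not turn the $g$-term into $\tfrac12(f_1^2+f_2^2)g\,\mu$ in isolation: by \eqref{eq: relation zeta f E} the term $\d\big(\sum_ia_ix_i\big)\wedge\d f_1\wedge\zeta_1\wedge\zeta_2=\d\big(\sum_ia_ix_i\big)\wedge(f_1\epsilon_2-f_2\epsilon_1)$ survives, as does the contribution of $\delta_\pi\gamma$ itself, so nothing is isolated by this operation. And Lemma \ref{lemma: degree 4 proof} concludes $g=0$ outright, not merely $g\in(f_1^2+f_2^2)\R[[f_1,f_2]]$; you are paraphrasing the internal induction of its proof rather than applying its statement, while the genuinely hard part---establishing its hypothesis in your situation---is precisely what remains undone.
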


\begin{proof}
Note that by wedging with $\dif f_1$, Proposition \ref{proposition: Poisson differential} and \eqref{eq: ker deg 4} imply
\[ \iota_{\star^{-1}\gamma}(\d f_1)\in \R[[f_1,f_2]]. \]
Using \eqref{eq: canonical iso}, \eqref{eq: relation E, T and f} and Corollary \ref{lemma: exactness wedge f} we obtain that 
\begin{align*}
     \gamma =r_1\epsilon_1+ r_2\epsilon_2+\dif f_1\wedge \beta
\end{align*}
for unique $r_1\in \R[[f_1,f_2]]$, $r_2\in \R[[f_2]]$ and some $\beta\in \Omega^2_f$. Wedging with $\dif f_2$ and using \eqref{eq: relation E, T and f}, \eqref{eq: canonical iso}, Proposition \ref{proposition: Poisson differential} and \eqref{eq: ker deg 4} implies
\begin{align*}
    \left(\sum_{i=1}^4 a_ix_i\right)\mu+\beta\wedge \dif f_1 \wedge \dif f_2 \in \R[[f_1,f_2]]\mu.
\end{align*}
Hence Lemma \ref{ex: general 1} yields
\begin{align*}
    a_1=a_2=a_3=a_4=0 \qquad \text{ and }\qquad \beta\wedge \dif f_1 \wedge \dif f_2 \in (f_1^2+f_2^2)\R[[f_1,f_2]]\mu.
\end{align*}
This implies that 
\[\beta = r \zeta_1\wedge \zeta_2+ \wtd{\beta} + \d f_1 \wedge \alpha_1 + \d f_2 \wedge \alpha_2,\]
for some $r\in \R[[f_1,f_2]]$, $\alpha_1,\alpha_2 \in \Omega^1_f$ and $\wtd{\beta}$ representing a non-trivial class in $\mathcal{D}^2(\d f_1,\d f_2)$. Note that 
\begin{align*}
    r_1\epsilon_1+ r_2\epsilon_2+r\d f_1\wedge  \zeta_1\wedge \zeta_2 \in \ker \delta_\pi .
\end{align*}
Hence it is enough to show 
\begin{align*}
    \gamma:=\dif f_1\wedge (\wtd{\beta} + \d f_2 \wedge \alpha_2) \in \ker \delta_\pi.
\end{align*}
In this case, the original equation is equivalent to 
\begin{align*}
    g\cdot \mu = \dif f_1 \wedge \dif \wtd{\beta} +\dif f_1\wedge \dif f_2 \wedge \dif \alpha_2.
\end{align*}
Using Lemma \ref{lemma: degree 4 proof} we obtain $g=0$, which concludes the proof.
\end{proof}
To prove the statement in degree one, it is, by a dimension count and Proposition \ref{proposition: Poincare series}, enough to show that for any non-trivial choice of coefficients $a_j,b_j\in \R[[x_2^2,x_4]]$ and $q_i,p_i\in \R[[f_1,f_2]]$, the $1$-cycle
\begin{align*}
    \alpha:= \dif (\sum_{j=1}^4 a_{j}x_j) +\sum_{j=1}^4 b_{j}x_j\dif f_1 +\sum_{i=1}^2q_i\dif f_i + p_i \zeta_i
\end{align*}
is not a boundary. That is, for all $\beta\in \Omega^2_f$ we have 
\[ \wtd{\alpha}:= \alpha- \delta_{\pi}(\beta) = 0 \quad \Rightarrow \quad a_j = b_j = q_i =p_i= 0.\]
If we take the differential of $\wtd{\alpha}=0$ we obtain:
\begin{align*}
    0=\dif \wtd{\alpha} = \d \left(\sum_{i=1}^2q_i\dif f_i + p_i \zeta_i\right)  +\dif \left(\sum_{j=1}^4 b_{j}x_j+ \iota_{\star^{-1}\d \beta}(\d f_2)\right)\wedge \dif f_1-\d \iota_{\star^{-1}\d \beta}(\d f_1)\wedge \dif f_2.
\end{align*}
Wedging with $\d f_1$, Corollary \ref{lemma: ps are zero with d} implies that $p_i=0$. Applying Lemma \ref{lemma: vector field normal form} for $\gamma=-\d \beta$ implies that $b_{j}=0$ for $j=1,2,3,4$, $\partial_x q_2= \partial_y q_1$ and $\d \beta\in \ker \delta_\pi$. The arguments from Section \ref{section: kernel degree 2 proof} give
\begin{align*}
    \beta = r\zeta_1\wedge \zeta_2 +g \dif f_1\wedge\dif f_2 +\dif f_1\wedge p_i\zeta_i +\d \overline{\alpha},
\end{align*}
for $r,p_i\in \R[[f_1,f_2]]$, $g\in \SR$ and $\overline{\alpha}\in \Omega^1_f$. Note all terms except $\d \overline{\alpha}$ are in $\ker \delta_\pi$. Therefore, we have
\begin{align*}
    0=\wtd{\alpha} = q_1\d f_1 +q_2\d f_2 +\d \left(\iota_{\star^{-1}\d \overline{\alpha}}(\d f_1\wedge \d f_2) +\sum_{j=1}^4 a_{j}x_j\right).
\end{align*}
Wedging the expression with $\d f_1\wedge \d f_2$ yields
\[ \d \left(\iota_{\star^{-1}\d \overline{\alpha}}(\d f_1\wedge \d f_2) +\sum_{j=1}^4 a_{j}x_j\right)\wedge \d f_1 \wedge \d f_2= 0. \]
By Proposition \ref{proposition: Poisson differential}, \eqref{eq: ker deg 4} and \eqref{eq: canonical iso} this is equivalent to
\[ \d \overline{\alpha}\wedge\d f_1\wedge \d f_2 +\left(\sum_{j=1}^4 a_{j}x_j\right)\mu \in \R[[f_1,f_2]]\mu.\]
Then, Lemma \ref{ex: general 1} implies $a_{j}=0$ for $j=1,2,3,4$ and by Lemma \ref{lemma: degree 4 proof} we have 
\[ \star \circ \sharp\left(\iota_{\star^{-1}\d \overline{\alpha}}(\d f_1\wedge \d f_2)\right)=\d \overline{\alpha}\wedge\d f_1\wedge \d f_2=0 . \]
Hence $\wtd{\alpha}=0$ implies $q_1=q_2=0$ which completes the proof.

\subsection{Degree 2}\label{section: homology in degree 2}

We proceed similar as in the previous cases. Consider
\begin{align*}
    \beta=&\ p \zeta_1\wedge \zeta_2 +q\d f_1\wedge \d f_2 +\sum_{i=1}^2  p_i \d (f_1\zeta_i) +q_{i}\d\zeta_i + \d \left(\sum_{j=1}^4 a_{j}x_j\right)\wedge \dif f_1,
\end{align*}
where $p,q,p_i,q_i\in \R[[f_1,f_2]]$ and $a_j\in \R[[x_2^2,x_4]]$. We want to show that
\begin{align*}
    0=\wtd{\beta}:= \beta - \delta_{\pi}(\gamma) \quad \Rightarrow \quad p=q=p_i=q_i=a_j=0 
\end{align*}
Replicating the argument for Lemma \ref{lemma: ses degree 2}, replacing Corollary \ref{lemma: ps are zero with d} with Remark \ref{remark: adapted lemma} we obtain 
\[ p=p_1=p_2=q_1=q_2=0\quad \text{ and } \quad \iota_{\star^{-1}\gamma}(\d f_1), \star^{-1}\d \gamma \in \R[[f_1,f_2]].\] 
As such we are left with
\[ 0=\wtd{\beta}= q\d f_1\wedge \d f_2- \iota_{\pi} (\d \gamma) + \d\left(\iota_{\star^{-1}\gamma}(\d f_2)+\sum_{j=1}^4 a_{j}x_j\right)\wedge \d f_1 -\d\iota_{\star^{-1}\gamma}(\d f_1) \wedge\d f_2.\]
Hence Lemma \ref{lemma: vector field normal form} implies $q=a_j=0$. Counting the elements and comparing them to Proposition \ref{proposition: Poincare series} implies that we have a representative for every cohomology class.

\subsection{Degree 3}
By Proposition \ref{proposition: Poisson differential}, elements in the image of the differential have the form
\begin{align*}
    \dif f_1\wedge \dif f_2 \wedge \dif g,
\end{align*}
for $g\in \SR$. Hence we can argue as in the proof of Lemma \ref{lemma: ses degree 1}.

\section{Proof of Corollary \ref{corollary: deformations}}\label{sec:deformations}

A Poisson structure coming from a deformation of the volume form is equals $g\pi$ for some $g \in \SR$ with positive constant term. Using \eqref{eq: isomorphism form vf} this corresponds to the $2$-form $g \d f_1 \wedge \d f_2$. 

From the definition of the differential we see that 
\[ g\d f_1\wedge \d f_2\ \in \ker \delta_\pi \]
By Proposition \ref{proposition: Poisson differential} and Theorem \ref{main theorem} we can write
\[ g\d f_1\wedge \d f_2 = \beta +\iota_{\pi} (\d \gamma) - \d \iota_{\star^{-1}\gamma}(\d f_1 \wedge\d f_2)\]
for some $\beta=\beta(a_i,p,p_j,q,q_j)$ representing a class in the second Poisson homology according to Theorem \ref{main theorem} and $\gamma\in \Omega^3_f$. Following the first part of the argument from subsection \ref{section: homology in degree 2} we can conclude that $p=p_i=q_i=0$. Using the first part of the proof of Lemma \ref{lemma: vector field normal form} we can conclude that $a_i=0$ and 
\begin{align*}
     \gamma =r_1\epsilon_1+ r_2\epsilon_2+\dif f_1\wedge (r \zeta_1\wedge \zeta_2+ \wtd{\beta} +  \d f_2\wedge \alpha )
\end{align*}
for unique $r_1\in \R[[f_1,f_2]]$, $r_2\in \R[[f_2]]$, some $r\in \R[[f_1,f_2]]$, $\alpha \in \Omega^1_f$ and where $\wtd{\beta}$ represents a non-trivial class in $\mathcal{D}^2(\d f_1,\d f_2)$. Note that this implies in particular, that 
\[ g\d f_1 \wedge \d f_2 = q\d f_1 \wedge \d f_2 +\delta_{\pi} (\gamma)\]
for some $q\in \R[[f_1,f_2]]$ and some $\gamma\in \Omega^3_f$ such that 
\[ \iota_{\star^{-1}\gamma}(\d f_i)= 0.\]

In other words, we can find a vector field $X\in \mathfrak{X}_{f,2}^1$ (see \eqref{eq:filtration}) and $q\in \R[[f_1,f_2]]$ satisfying
\[
    g\pi = q\pi + \d _\pi (X).
\]
This allows us to prove the following which implies Corollary \ref{corollary: deformations}.
\begin{claim}
    Given $g\in \SR$ with positive constant term, there exists a formal diffeomorphism $\phi$ satisfying
    \[ \phi^*(g\pi)= p\pi\]
    for some $p\in \R[[f_1,f_2]]$ with positive constant term.
\end{claim}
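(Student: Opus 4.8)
The plan is to deduce the Claim from the relation $g\pi=q\pi+\d_\pi X$ with $X\in\X^1_{f,2}$ by a Moser path argument. Writing $h:=g-q$ and using $g\pi-q\pi=(g-q)\pi$, the relation reads $h\pi=\d_\pi X=[\pi,X]=-\Lie_X\pi$. I would connect $q\pi$ to $g\pi$ through $\pi_t:=g_t\pi$ with $g_t:=(1-t)q+tg$; each $g_t$ will be invertible, so $\pi_t=\pi_{f,g_t^{-1}\mu,\mu_2}$ is again Jacobi--Poisson, $\dot\pi_t=h\pi$, and I would look for a time-dependent field $Y_t$ whose flow $\psi_t$ satisfies $\psi_t^*\pi_t=\pi_0=q\pi$. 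Differentiating, this requires solving the Moser equation
\[
    \Lie_{Y_t}\pi_t=-\dot\pi_t=-h\pi,\qquad\text{equivalently}\qquad \d_{\pi_t}Y_t=h\pi ,
\]
for each $t$; then $\psi_1^*(g\pi)=q\pi$ and $\phi:=\psi_1$ proves the Claim with $p=q$.

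Before solving it I would pin down the constant terms, which is needed both for invertibility of $g_t$ and to see that $p=q$ has positive constant term. Since $X(f_i)=0$, the lowest-order part of $X(f_1)=0$ forces the constant part of $X$ to vanish, while the linear part $X^{(1)}$ satisfies $X^{(1)}(f_i)=0$. The linear vector fields annihilating $f_1,f_2$ are exactly $\mathrm{span}_\R(T_1,T_2)$ (the infinitesimal linear symmetries of $f_1+if_2=z^2+w^2$), and these preserve $\pi=-8\,T_1\wedge T_2$, i.e.\ $\Lie_{X^{(1)}}\pi=0$. Comparing the degree-two parts of $h\pi=-\Lie_X\pi$ then gives $(g(0)-q(0))\pi=-\Lie_{X^{(1)}}\pi=0$, hence $q(0)=g(0)>0$. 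In particular $g_t(0)=g(0)>0$, so all $g_t$ are invertible and $q\in\R[[f_1,f_2]]$ has positive constant term.

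The heart of the matter is solving the Moser equation, and this is where I expect the main obstacle. The naive guess $Y_t=g_t^{-1}X$ does \emph{not} work: a direct computation gives $\d_{\pi_t}(g_t^{-1}X)=h\pi-\tfrac{2t}{g_t}X(g)\,\pi$, the error term coming from the decomposability identity $X_g\wedge X=-X(g)\,\pi$ (with $X_g:=\iota_{\d g}\pi$), which holds because $\pi$ has rank two and $X,X_g$ are tangent to $\F$. Thus one must genuinely solve $\d_{\pi_t}Y_t=h\pi$, i.e.\ show that the class $[h\pi]$ is trivial in the second Poisson cohomology of $\pi_t$ for all $t$. I would reduce this to a leafwise statement: since $\phi^*\pi_{f,\nu,\mu_2}=\pi_{f,\phi^*\nu,\mu_2}$ whenever $f\circ\phi=f$, producing $\phi$ is equivalent to matching the volume forms $g^{-1}\mu$ and $q^{-1}\mu$ by a fibre-preserving diffeomorphism. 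Via $\star$ and Proposition \ref{proposition: Poisson differential}, the infinitesimal form of this is an equation $\operatorname{div}_\F(S_t)=\rho_t$ for a field $S_t$ tangent to $\F$, which is solved by the Poincar\'e lemma along the two-dimensional (noncompact) leaves. The only obstruction to doing so formally and compatibly at the singular leaf lies in the degree-zero Poisson homology $H_0(\R^4,\pi)$ of Theorem \ref{main theorem}; it vanishes here precisely because the total discrepancy $h\pi=\d_\pi X$ is exact, $X$ furnishing the required primitive.

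Finally, the field $Y_t$ so obtained vanishes at the origin (as $X$ does), hence integrates to a formal diffeomorphism $\psi_t$ fixing the origin, and $\psi_1$ is the desired $\phi$. In summary, the only substantive difficulty is the solvability of the Moser equation, equivalently the leafwise Poincar\'e lemma above; the remaining steps (the path construction, the constant-term computation, and the integration of $Y_t$) are routine once this is in place.
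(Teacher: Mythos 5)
Your setup and two of your computations are correct and worth keeping: the verification that $q(0)=g(0)>0$ (via the fact that the linear vector fields annihilating $f_1,f_2$ are exactly $\mathrm{span}_\R(T_1,T_2)$, which preserve $\pi=-8\,T_1\wedge T_2$), and the computation that the naive transport fails, $\d_{\pi_t}(g_t^{-1}X)=h\pi-\tfrac{2t}{g_t}X(g)\pi$. But the proof has a genuine gap exactly at the point you yourself flag as ``the only substantive difficulty'': the Moser equation $\d_{\pi_t}Y_t=h\pi$ is never solved. Your justification --- a Poincar\'e lemma along the leaves, with ``the only obstruction in $H_0(\R^4,\pi)$, vanishing because $h\pi=\d_\pi X$ is exact'' --- is invalid on two counts. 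First, $\d_\pi$-exactness of $h\pi$ is not $\d_{\pi_t}$-exactness: your own error term $\tfrac{2t}{g_t}X(g)\pi$ \emph{is} the discrepancy between the two, it only disappears at $t=0$ (where $Y_0=q^{-2}X$ works since $X(q)=0$), and for $t>0$ you give no argument that it is $\d_{\pi_t}$-exact. Second, the ``leafwise Poincar\'e lemma'' is false in the formal setting at the singular point. Restricting to fiber-tangent $Y_t\in\X^1_{f,2}$, the Moser equation is equivalent to a divergence problem $\operatorname{div}_\mu Z_t=h/g_t^2$ with $Z_t\in\X^1_{f,2}$; writing $\sigma_t=\star Z_t$, fiber-tangency gives $\d f_i\wedge\sigma_t=0$, so by Corollary \ref{lemma: exactness wedge f} and Proposition \ref{proposition: iso division group} one has $\sigma_t=\d f_1\wedge I(c,p,q)+\d f_1\wedge\d f_2\wedge\alpha$, and then Lemma \ref{lemma: degree 4 proof} says precisely that no nonzero element of $\R[[f_1,f_2]]$ arises as $\d\sigma_t/\mu$. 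So the obstruction space of this problem contains all of $\R[[f_1,f_2]]$ --- this is exactly why $H_0\neq0$ in Theorem \ref{main theorem}, and why the Claim can only produce $p\pi$ rather than a constant multiple of $\pi$. Deciding that $h/g_t^2$ avoids this nonzero obstruction for \emph{every} $t$ is a problem of the same type and difficulty as the Claim itself; the natural tool would be $H^2(\R^4,\pi_t)$, which the paper does not compute (only $H^2(\R^4,\pi)$ is known), and whose identification with $H^2(\R^4,p\pi)$ is the statement being proven --- so that route is circular.

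The paper sidesteps all of this by abandoning the global-in-$t$ path and inducting on homogeneous degree: writing $g=\sum_i g_i$, it uses the cohomological statement for the \emph{fixed} $\pi$ to write $g_i\pi=q_i\pi+\d_\pi\wtd{X}_i$ as in \eqref{eq: primitiv}, and flows along $X_i=\tfrac1g\wtd{X}_i$; by \eqref{eq:flowstuff} this removes $g_i$ modulo $\R[[f_1,f_2]]$ up to terms of \emph{higher} homogeneous degree --- which include precisely error terms of the kind you computed, now harmless because they are absorbed in later induction steps. Your scheme could be repaired the same way (solve modulo increasing degree and correct as you go), but then the path $g_t$ and the time-dependent field do no work, and the argument collapses to the paper's induction. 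As written, the key solvability step is asserted, not proven, and the reason offered for it is false.
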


To see this, first observe that the flow $\phi_t$ of $X$ satisfies:
\begin{equation}\label{eq:flowstuff} 
\phi_1^*(g \pi) - g\pi = \int_{0}^1 \frac{\d}{\d t} \phi_t^* (g \pi) \d t = [X,g \pi] = - g\d_\pi X + (\Lie_Xg) \pi.
\end{equation}
Let us write $g$ as $ g= \sum_{i=0}^\infty g_i$, where $g_i$ is homogeneous of degree $i$ and $g_0>0$. By induction on $i$ we show that if $g_j\in \R[[f_1,f_2]]$ for all $j<i$ then we can change $g\pi$ such that also $g_i \in \R[[f_1,f_2]]$.  

We start with $i=1$. From cohomology statement we can find $\wtd{X}_1$ homogeneous of degree $2$ such that
\begin{equation}\label{eq: primitiv}
    g_1\pi = \d_\pi \wtd{X}_1
\end{equation} 
Taking $X_1:= \frac{1}{g} \wtd{X}_1 $ we obtain from \eqref{eq:flowstuff} that:
\[ \phi_1^*(g \pi) - g\pi = -g\d_\pi X_1 + \mathcal{O}(|x|^4) =-\d_\pi \wtd{X}_1 +\mathcal{O}(|x|^4) \]
Hence we may assume $g_1=0$. In even degrees \eqref{eq: primitiv} will be of the form
\begin{equation*}
    g_{2j}\pi = q_{2j}\pi + \d_\pi \wtd{X}_{2j}
\end{equation*} 
for some $q_{2j}\in \R[[f_1,f_2]]$ contributing to $q$. Repeating the argument for increasing $i\in \N$ we map $g\pi$ to $q\pi$.

\section{Proof of Corollary \ref{corollary: de-rham cohom}}\label{sec:de-rham cohom}

In degree zero, the result follows from Theorem \ref{main theorem} as $\d$ maps representatives to representatives. 

In degree four, we note that for the image of $\d$ we have by \eqref{eq: rel T, zeta} that
\begin{align*}
     \d \left(\sum_{i=1}^2 p_i \zeta_2 \wedge \d \zeta_i +q_i \d f_1 \wedge \d \zeta_i\right) =&\, 2p_2\mu+ (\partial _x p_1+\partial_yp_2) \d f_1 \wedge \zeta_2 \wedge \d \zeta_1 +(\partial _x  p_2-\partial_yp_1) \d f_1 \wedge \zeta_2 \wedge \d \zeta_2  \\
     =&\, (2p_2 +f_2(\partial _x p_1+\partial_yp_2)+f_1(\partial _x  p_2-\partial_yp_1))\mu
    \end{align*}
which implies that $\d $ is surjective in homology. 

In degree three, note that from the discussion in degree four, the kernel of $\d $ is parameterized by
\[ q_1,q_2, p_1 \in \R[[f_1,f_2]]\]
since the differential operator $2+x\partial_x+y\partial_y$ is invertible on $\R[[x,y]]$. To determine the image of $\d$ let $\beta=\beta(a_i,p,p_j,q,q_j)$ be a representative of a class in the second Poisson homology. We can write $\beta$ as
\[ \beta:= p \zeta_1\wedge \zeta_2 +q\d f_1\wedge \d f_2 + \sum_{i=1}^2 p_i \d f_1\wedge \zeta_i + q_i\d \zeta_i +\sum_{i=1}^4 \d (a_ix_i)\wedge \d f_1\]
for $p, p_i, q, q_i \in  \R[[f_1,f_2]]$ and $a_i \in \R[[x_2^2,x_4]]$. Applying $\d$ and using \eqref{eq: rel E, zeta} and \eqref{eq: rel T, zeta} yields
\begin{align*}
    \d \beta= \left(\partial _x p \d f_1 +\partial _y p \d f_2\right)\wedge \zeta_1\wedge \zeta_2 + 2p \zeta_2\wedge\d \zeta_1+  \sum_{i=1}^2 \partial_y p_i \d f_2 \wedge \d f_1 \wedge \zeta_i\\
    +(\partial_xq_1+\partial_y q_2-p_1) \d f_1\wedge \d \zeta_1 +(\partial_xq_2-\partial_yq_1-p_2) \d f_1\wedge \d \zeta_2\quad 
\end{align*} 
Let $\wtd{q}_1,\wtd{q}_2,\wtd{p}_1\in \R[[f_1,f_2]]$ and 
\[ \wtd{p}_2:= -(2+x\partial_x+y\partial_y)^{-1}(y\partial_x-x\partial_y)\wtd{p}_1.\]
Let $\gamma=\gamma(\wtd{p}_i,\wtd{q}_i)$ be the corresponding representative of a class in $\ker \d$. We want to determine for which $p,p_i, q,q_i, a_i$ as above and $g\in \SR$ we have
\[ 0= \gamma + \d \beta +\d g \wedge \d f_1\wedge \d f_2.\]
Wedging with $\d f_1$ and $\d f_2$ respectively, yields
\begin{align*}
    (f_1^2+f_2^2)\partial_y p +2f_2p +f_1\wtd{p}_2+f_2\wtd{p}_1= 0 = (f_1^2+f_2^2)\partial_x p +2f_2p -f_2\wtd{p}_2+f_1\wtd{p}_1
\end{align*}
and hence we obtain
\[-\wtd{p}_1= 2p+ f_1\partial_x p+f_2\partial_y p. \]
Therefore, we may assume $\wtd{p_1}=\wtd{p_2}=p=0$. Contracting the remaining equation with $T_1$ and $T_2$ respectively, yields using \eqref{eq: rel T, zeta contract} that
\begin{align*}
    0=\partial_xq_2-\partial_yq_1-p_2-f_1\partial_yp_1-f_2\partial_y p_2 +\wtd{q}_1 +4\d g(T_1)\\
    0=\partial_x q_1+\partial_y q_2- p_1-f_2\partial_yp_1+f_1\partial_y p_2 +\wtd{q}_2 +4\d g(T_2)
\end{align*}
By Lemma \ref{lemma: dgt zero} this implies $\d g(T_1)=\d g(T_2)=0$. Hence $\wtd{q}_1, \wtd{q}_2$ can be realized by choices of $p_2$ and $p_1$, respectively.

In degree two, let's denote a representative of the second Poisson homology by $\beta=\beta(\wtd{a}_i,\wtd{q},\wtd{q}_j,\wtd{p},\wtd{p}_j)$ as above. Then by the above discussion, $\beta\in \ker \d$ iff $\wtd{p}=0$ and $\wtd{p}_i$ such that
\[ \wtd{p}_2=(1+y\partial_y)^{-1}\left(\partial_x\wtd{q}_2-\partial_y\wtd{q}_1-f_1\partial_y\wtd{p}_1\right)\quad \text{ and }\quad  \wtd{p}_1= (1+y\partial_y)^{-1}\left(\partial_x \wtd{q}_1+\partial_y \wtd{q}_2+f_1\partial_y \wtd{p}_2\right).\]
We denote by $\alpha=\alpha(a_i,b_i,q_j,p_j)$ a representative of the first Poisson homology and $\gamma\in \Omega^3_f$. We want to find $\alpha$ and $\gamma $ such that
\begin{align*}
    0= \beta+ \d \alpha  +\iota_{\pi} (\d \gamma) - \d \iota_{\star^{-1}\gamma}(\d f_1 \wedge\d f_2)
\end{align*}
where $\d \alpha$ is given by
\begin{align*}
    \d \alpha= \sum_{j=1}^2\d( p_j \zeta_j)+ \left(\partial_xq_2-\partial_y q_1)\right)\d f_1\wedge \d f_2 + \sum_{i=1}^4\d \left( b_ix_i\right)\wedge \d f_1.
\end{align*}
Wedging the equation above with $\d f_1$ and contracting with $T_1$ and $T_2$ yields by \eqref{eq: rel T, zeta} and \eqref{eq: rel T, zeta contract} that
\begin{align*}
    0=&\, f_1\partial_y p_1+f_2\partial_y p_2+p_2 +\wtd{q}_2 +d(\iota_{\star^{-1}\gamma}(\d f_1))(T_1) \\
    0=&\, f_2\partial_y p_1-f_1\partial_y p_2+p_1 +\wtd{q}_1 +d(\iota_{\star^{-1}\gamma}(\d f_1))(T_2)
\end{align*}
Using again Lemma \ref{lemma: dgt zero} implies $d(\iota_{\star^{-1}\gamma}(\d f_1))(T_1)=d(\iota_{\star^{-1}\gamma}(\d f_1))(T_2)=0$. Hence $\wtd{q}_1$ and $\wtd{q}_2$ can be realized by choices of $p_1$ and $p_2$, respectively. That is, we may assume $\wtd{q}_i=\wtd{p}_i=p_i=0$. From Lemma \ref{lemma: vector field normal form} implies that $\wtd{a}_i=b_i$ and $\partial_xq_2=\partial_y q_1$.

Finally, to determine the kernel of $\d$ in degree one, we note that from the discussion in degree two we immediately get $p_1=p_2=b_i=0$ and $\partial_xq_2=\partial_y q_1$. Using Theorem \ref{main theorem} we therefore obtain the statement.

\printbibliography
\Addresses
\end{document}